%
%
%
%
%
%
%
%
\documentclass[smallextended,numbook,runningheads]{svjour3}     
\smartqed  
\usepackage{graphicx}
\usepackage{amsmath}
\usepackage{epstopdf} 
\usepackage{mathptmx}      
%
%
%
\journalname{Numer. Math.}

\usepackage{amsmath}
\usepackage{stmaryrd}
\usepackage{amssymb}
\usepackage{empheq}
\usepackage{cases}
\usepackage{cite}
\usepackage{caption,graphicx}

\usepackage[dvipdfm,dvips,ps2pdf,colorlinks,pdfstartview=FitH,
bookmarksnumbered=true]{hyperref}
\hypersetup{backref=true,citebordercolor={1 1 1},citecolor=blue}
\hypersetup{linkbordercolor={1 1 1},linkcolor=blue,linktocpage=true}

\input psfig.sty

\newcommand{\eps}{\varepsilon}

\newcommand{\bR}{\mathbb{R}}
\newcommand{\bC}{\mathbb{C}}
\newcommand{\mcE}{\mathcal{E}}
\newcommand{\fe}{\mathrm{e}}
\newcommand{\sech}{\mathrm{sech}}

\newcommand{\bx}{\mathbf{x}}
\newcommand{\be}{\begin{equation}}
\newcommand{\ee}{\end{equation}}
\newcommand{\ba}{\begin{array}}
\newcommand{\ea}{\end{array}}
\newcommand{\bea}{\begin{eqnarray}}
\newcommand{\eea}{\end{eqnarray}}
\newcommand{\beas}{\begin{eqnarray*}}
\newcommand{\eeas}{\end{eqnarray*}}

\newcommand{\bv}{{\bf v}}

\numberwithin{equation}{section}

\begin{document}


\title{A uniformly accurate (UA) multiscale time integrator Fourier pseoduspectral method for
the Klein-Gordon-Schr\"{o}dinger equations in the nonrelativistic limit regime}

\titlerunning{Multiscale methods for the KGS equations}

\author{Weizhu Bao  \and  Xiaofei Zhao}

\authorrunning{W. Bao and X. Zhao} 

\institute{W. Bao  \at
              Department of Mathematics and Center for Computational
              Science and Engineering, National
              University of Singapore, Singapore 119076\\
              Fax: +65-6779-5452, Tel.: +65-6516-2765,\\
              URL: http://www.math.nus.edu.sg/\~{}bao/\\
              \email{matbaowz@nus.edu.sg}\\[1em]
           X. Zhao \at
              Department of Mathematics, National
              University of Singapore, Singapore 119076\\
              \email{zhxfnus@gmail.com}
              }

\date{Received: date / Accepted: date}

\maketitle


\begin{abstract}
A multiscale time integrator Fourier pseudospectral (MTI-FP) method is proposed and analyzed for
solving the Klein-Gordon-Schr\"{o}dinger (KGS) equations in the nonrelativistic limit regime
with a dimensionless parameter $0<\eps\le1$ which is inversely proportional to the speed of light.
In fact, the solution to the KGS equations
propagates waves with wavelength at $O(\eps^2)$ and $O(1)$ in time and space,
 respectively, when  $0<\eps\ll 1$, which brings significantly
numerical burdens in practical computation. The MTI-FP method is
designed by adapting a multiscale decomposition by
frequency to the solution at each time step and applying the Fourier pseudospectral
discretization and exponential wave integrators for spatial and temporal derivatives,
respectively. We rigorously establish  two independent error bounds for the  MTI-FP
at $O(\tau^2/\eps^2+h^{m_0})$ and $O(\eps^2+h^{m_0})$ for $\eps\in(0,1]$ with $\tau$
time step size, $h$ mesh size and $m_0\ge 4$ an integer depending on the regularity of
the solution, which
imply that the MTI-FP converges uniformly and optimally in
space with exponential convergence rate
if the solution is smooth, and uniformly in time with linear convergence rate
at $O(\tau)$ for $\eps\in(0,1]$ and optimally with quadratic convergence rate at $O(\tau^2)$
in the regime when either $\eps=O(1)$ or $0<\eps\le \tau$.
Thus the meshing strategy requirement (or $\eps$-scalability) of the MTI-FP
is $\tau=O(1)$ and $h=O(1)$ for $0<\eps\ll 1$,
which is significantly better than classical methods. Numerical results demonstrate
that our error bounds are optimal and sharp. Finally, the MTI-FP method is
applied to study numerically convergence rates of the KGS equations to the limiting models
in the nonrelativistic limit regime.

\keywords{Klein-Gordon-Schr\"{o}dinger equations \and nonrelativistic limit regime
\and multiscale time integrator  \and uniformly accurate \and multiscale decomposition \and
exponential wave integrator}

\subclass{65L05 \and 65L20 \and 65L70}
\end{abstract}


\section{Introduction}

Consider the Klein-Gordon-Schr\"{o}dinger (KGS) equations in
$d$-dimensions ($d=3,2,1$) \cite{Added,KGS0,KGS2}:
\begin{subequations}\label{dimKGS}
\begin{align}
&i\hbar\partial_t\psi(\mathbf{x},t)+\frac{\hbar^2}{2m_1}\Delta\psi(\mathbf{x},t)+
g\phi(\mathbf{x},t)\psi(\mathbf{x},t)=0,\quad \mathbf{x}\in\bR^d,\\
&\frac{1}{c^2}\partial_{tt}\phi(\mathbf{x},t)-\Delta\phi(\mathbf{x},t)+\frac{m_2^2c^2}{\hbar^2}\phi(\mathbf{x},t)
-g|\psi(\mathbf{x},t)|^2=0,
\end{align}
\end{subequations}
which represent a classical model for describing the dynamics of a complex-valued scalar
nucleon field $\psi:=\psi(\bx,t)$ interacting with a neutral real-valued scalar
meson  field $\phi:=\phi(\bx,t)$ through the Yukawa coupling with $0\ne g\in {\mathbb R}$ the coupling constant.
Here $t$ is time, $\mathbf{x}\in\bR^d$ is the spatial coordinate,
$\hbar$ is the Planck constant, $c$ is speed of light, $m_1>0$ is  the mass of a nucleon and
$m_2>0$ is the mass of a meson.

In order to scale the KGS (\ref{dimKGS}), we introduce
\be\label{scale}
\tilde{t}=\frac{t}{t_s}, \quad \tilde\bx=\frac{\bx}{x_s},
\quad \tilde\psi(\tilde \bx,\tilde t)=x_s^{d/2}\,\psi(\bx,t), \quad
\tilde \phi(\tilde \bx,\tilde t)=\frac{\phi(\bx,t)}{\phi_s},
\ee
where $x_s$, $t_s$ and $\phi_s$ are the dimensionless length unit, time unit and meson field unit, respectively,
satisfying $t_s=\frac{2m_1x_s^2}{\hbar}$ and $\phi_s=\frac{\hbar x_s^{-d/2}}{\sqrt{2m_1}}$
with $v=\frac{x_s}{t_s}=\frac{\hbar}{2m_1x_s}$ being the wave speed.
Plugging (\ref{scale}) into (\ref{dimKGS}), after a simple computation
and then removing all $\tilde{ }$, we obtain
the following dimensionless KGS equations in $d$-dimensions ($d=3,2,1$):
\begin{subequations}\label{KGS}
\begin{align}
& i\partial_t\psi(\mathbf{x},t)+\Delta\psi(\mathbf{x},t)+\lambda\phi(\mathbf{x},t)\psi(\mathbf{x},t)=0,
\quad \mathbf{x}\in\bR^d,\label{KGS 2}\\
&\eps^2\partial_{tt}\phi(\mathbf{x},t)-\Delta\phi(\mathbf{x},t)+\frac{\mu^2}
{\eps^2}\phi(\mathbf{x},t)-\lambda\left|\psi(\mathbf{x},t)\right|^2
=0,\label{KGS 1}
\end{align}
\end{subequations}
where $\varepsilon$ is a dimensionless parameter
inversely proportional to the speed of light given by
\begin{equation}\label{eps}
0<\varepsilon :=\frac{v}{c}= \frac{x_s}{ t_s\, c}=\frac{\hbar}{2cm_1x_s}\le 1,
\end{equation}
and $\mu=\frac{m_2}{2m_1}>0$ and $\lambda=\frac{g\sqrt{2m_1}x_s^{2-d/2}}{\hbar}\in {\mathbb R}$ are
two dimensionless constants which are independent of $\eps$.

We remark here that if one chooses the dimensionless length unit
$x_s=\frac{\hbar}{2cm_1}$, $ t_s=\frac{x_s}{c}=\frac{\hbar}{2c^2m_1}$
and $\phi_s=\frac{c^{d/2}\hbar^{1-d/2}}{(2m_1)^{(1-d)/2}}$
in (\ref{scale}),  then $\varepsilon =1$ in (\ref{eps}) and Eqs. (\ref{KGS}) with  $\varepsilon =1$
take the form often appearing in the 
literature \cite{Added,KGS0,KGS2}. This choice of $x_s$ is appropriate
when the wave speed is at the same order of the speed of light. However, when the wave speed is
much smaller than the speed of light, a different choice of $x_s$ is more appropriate. Note that
the choice of $x_s$ determines the observation scale of the time evolution of the particles and decides:
(i) which phenomena are `visible' by asymptotic analysis, and (ii) which phenomena
can be resolved by discretization by specified spatial/temporal grids.
In fact, there are two important parameter regimes: One is when  $\varepsilon =1$
($\Longleftrightarrow x_s=\frac{\hbar}{2cm_1}$, $t_s=\frac{\hbar}{2c^2m_1}$
and $\phi_s=\frac{c^{d/2}\hbar^{1-d/2}}{(2m_1)^{(1-d)/2}}$), then Eqs. (\ref{KGS}) describe the case
that wave speed is at the same order of the speed of light; the other one is when $0<\varepsilon \ll 1$,
then Eqs. (\ref{KGS}) are in the nonrelativistic limit regime.

To study the dynamics of the KGS (\ref{KGS}), the initial data is usually given as
\begin{equation}\label{KGS ini}
\psi(\mathbf{x},0)= \psi_0(\mathbf{x}), \quad\phi(\mathbf{x},0)= \phi_0(\mathbf{x}),\quad\partial_t\phi(\mathbf{x},0)=\frac{1}{\eps^2}\phi_1(\mathbf{x}),
\qquad \mathbf{x}\in\bR^d,
\end{equation}
where the complex-valued function $\psi_0$ and the real-valued functions
$\phi_0$ and $\phi_1$ are independent of $\eps$. The KGS equations (\ref{KGS})
are dispersive and time symmetric. They conserve the {\sl mass} of the nucleon field
\be
\|\psi(\cdot,t)\|_{L^2}^2:=\int_{\bR^d}|\psi(\bx,t)|^2\,d\bx \equiv
\int_{\bR^d}|\psi(\bx,0)|^2\,d\bx=\int_{\bR^d}|\psi_0(\bx)|^2\,d\bx,
\quad t\ge0,
\ee
and the {\sl Hamiltonian} or total {\sl energy}
\begin{eqnarray}
E(t)&:=&\int_{\bR^d}\left[\frac{1}{2}\left(\eps^2\left|\partial_t\phi\right|^2
  +|\nabla\phi|^2
  +\frac{\mu^2}{\eps^2}\left|\phi\right|^2\right)
  +|\nabla\psi|^2-\lambda|\psi|^2\phi \right]d\bx\nonumber\\
  &\equiv&\int_{\bR^d}\left[\frac{1}{2}\left(\frac{1}{\eps^2}\left|\phi_1\right|^2
  +|\nabla\phi_0|^2
  +\frac{\mu^2}{\eps^2}\left|\phi_0\right|^2\right)
  +|\nabla\psi_0|^2-\lambda|\psi_0|^2\phi_0 \right]d\bx\nonumber\\
  &=& E(0),\quad t\geq0. \label{energy}
\end{eqnarray}

For the KGS equations (\ref{KGS}) with $\eps=1$, i.e. O(1)-speed of light regime,
there are extensive  analytical and numerical results in the literatures.
For the existence and uniqueness as well as regularity,
we refer to \cite{KGS0,KGS1,KGS1.5,KGS2,KGS6,GuoBL1,GuoBL2,GuoBL3,Biler}
and references therein. For the numerical methods and comparison such as
the finite difference time domain (FDTD) methods and Crank-Nicolson Fourier pseudospectral method,  we refer to \cite{BaoYang,KGS4,KGS5,WTC} and references therein.
However, for the KGS equations (\ref{KGS}) with $0<\varepsilon\ll 1$,
i.e. nonrelativistic limit regime (or the scaled speed of light goes to infinity),
the analysis and efficient computation of the KGS equations (\ref{KGS})
are mathematically and numerically rather complicated issues.
The main difficulty is due to that the solution is highly oscillatory in time
and the corresponding energy functional $E(t)=O(\eps^{-2})$ in (\ref{energy})
becomes unbounded when $\varepsilon\to0$.

\begin{figure}[h!]
\centerline{\psfig{figure=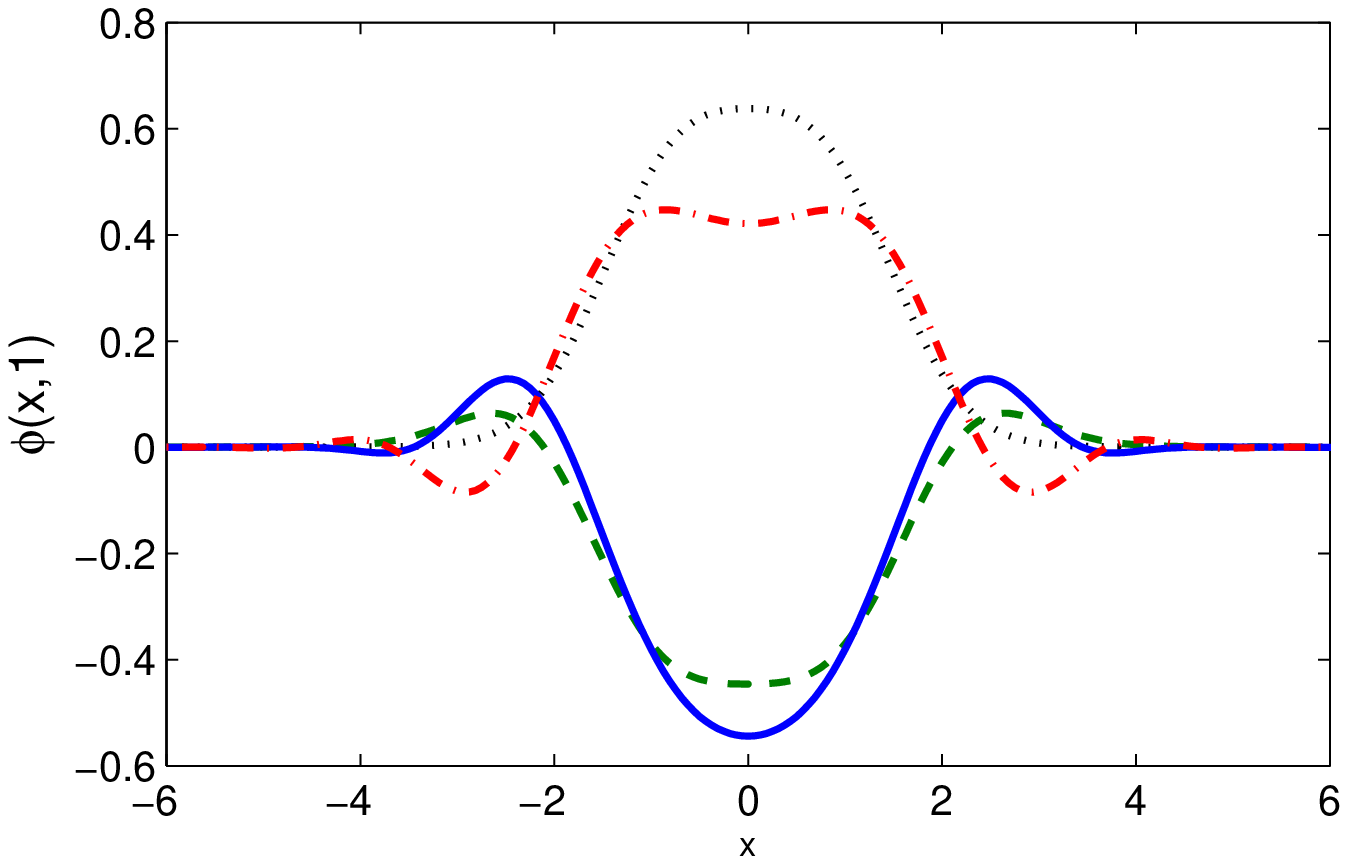,height=4.1cm,width=12cm}}
\centerline{\psfig{figure=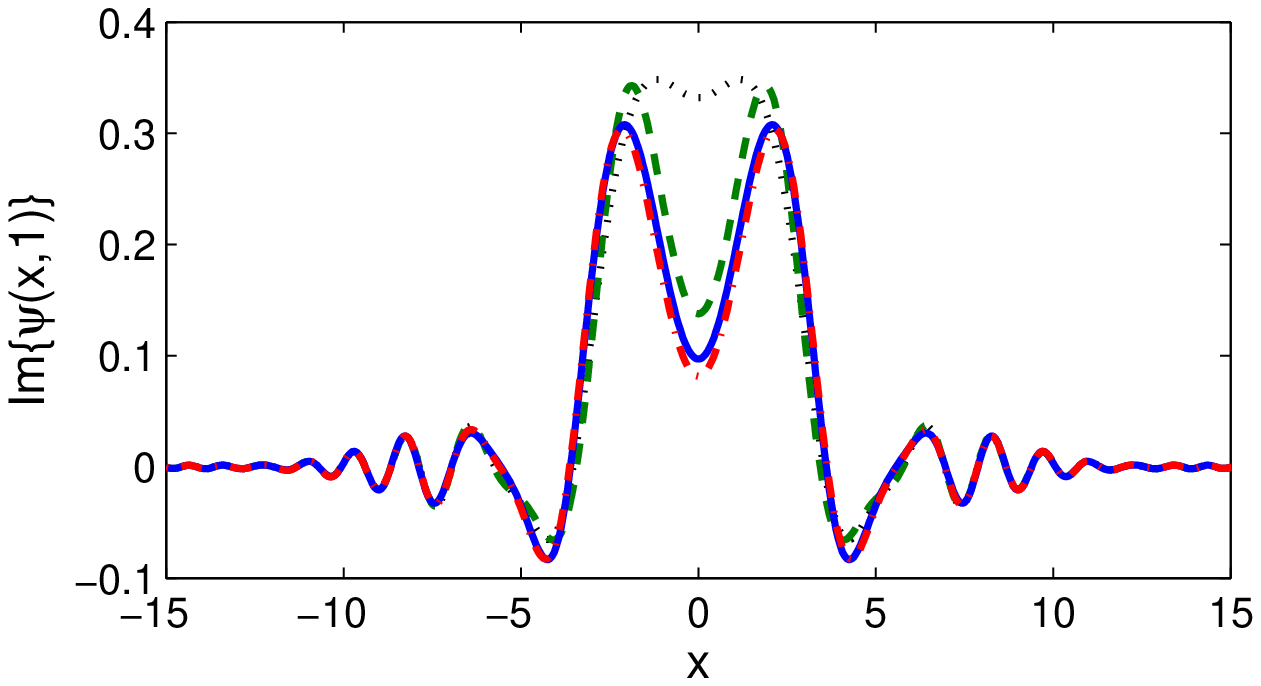,height=4.1cm,width=12cm}}
\centerline{\psfig{figure=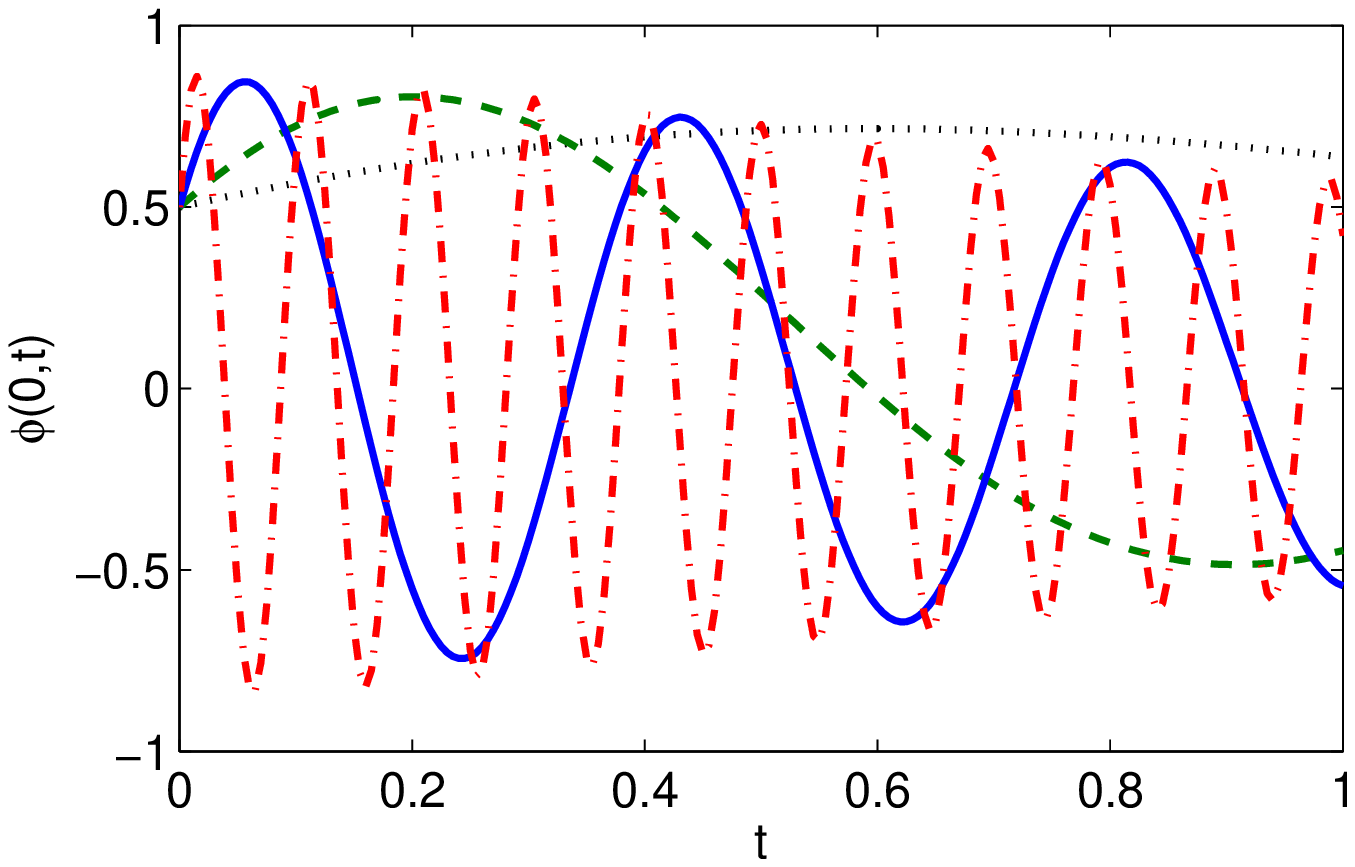,height=4.1cm,width=12cm}}
\centerline{\psfig{figure=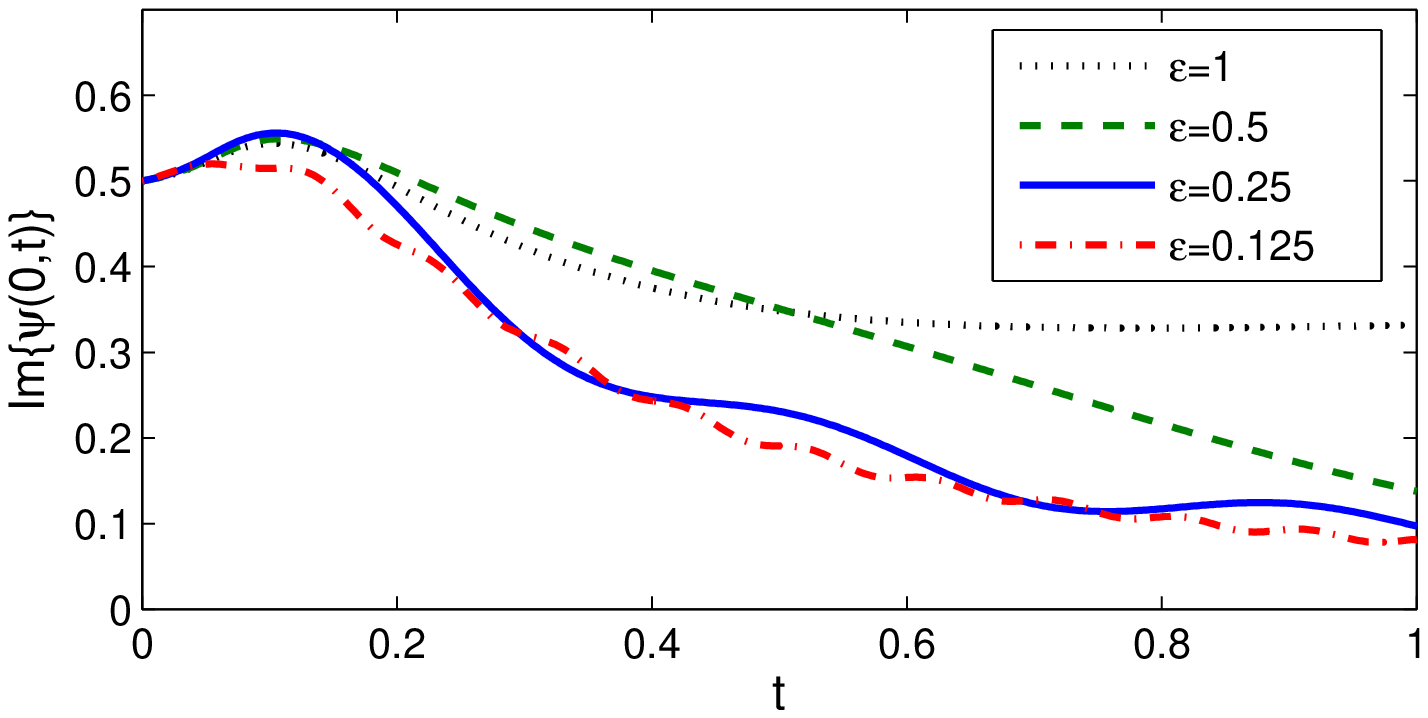,height=4.1cm,width=12cm}}
\caption{Plots of the solution of the KGS (\ref{KGS})-(\ref{KGS ini}) with $d=1$ for different $\eps$, where Im$\{\psi\}$ denotes the imaginary part of $\psi$.}\label{fig:0}
\end{figure}

Formally, in the nonrelativistic limit regime,
i.e. $0<\eps\ll1$, similarly to the analysis of the nonrelativistic limit
of the Klein-Gordon (KG) equation \cite{Machihara, Masmoudi,Masmoudi1,Lu}, taking the ansatz
\be\label{ans}
\phi(\bx,t)=\fe^{i\mu t/\eps^2}z(\bx,t)+\fe^{-i\mu t/\eps^2}\overline{z}(\bx,t)+o(\eps),
\qquad \bx\in {\mathbb R}^d, \quad t\ge0,
\ee
where $\bar{z}$ denotes the complex conjugate of a complex-valued function $z$, and plugging it into (\ref{KGS}) and (\ref{KGS ini}), we obtain the
Schr\"{o}dinger equations with wave operator as a limiting model
\begin{subequations}\label{modellimit92}
\begin{align}
&i\partial_t\psi(\bx,t)+\Delta\psi(\bx,t)=0, \qquad \bx\in\bR^d, \qquad t>0,\label{Schw1}\\
&2i\mu \partial_t z(\bx,t)+\eps^2\partial_{tt}z(\bx,t)-\Delta z(\bx,t)=0,\label{schw2}
\end{align}
\end{subequations}
with the well-prepared initial data \cite{Cai1,Cai2}
\be\label{schw45}
z(\bx,0)=\frac{1}{2}\left[\phi_0(\bx)-\frac{i}{\mu}\phi_1(\bx)\right],\
\partial_t z(\bx,0)=-\frac{i}{2\mu}\Delta z(\bx,0),\ \psi(\bx,0)=\psi_0(\bx).
\ee
In addition, after dropping the second term in (\ref{schw2}), formally we get the
Schr\"{o}dinger equations as another limiting model
\begin{subequations}\label{modellimit4}
\begin{align}
&i\partial_t\psi(\bx,t)+\Delta\psi(\bx,t)=0, \\
&2i\mu\partial_t z(\bx,t)-\Delta z(\bx,t)=0,\qquad \bx\in\bR^d, \qquad t>0,
\end{align}
\end{subequations}
with the  initial data
\be\label{schint4}
z(\bx,0)=\frac{1}{2}\left[\phi_0(\bx)-\frac{i}{\mu}\phi_1(\bx)\right],\qquad \quad\psi(\bx,0)=\psi_0(\bx),
\qquad \bx\in\bR^d.
\ee
This formally suggests that the solution of (\ref{KGS}) propagates
highly oscillatory waves with amplitude at $O(1)$ and wavelength
at $O(\eps^2)$ and $O(1)$ in time and space, respectively, when $0<\eps\ll1$.
To illustrate this, Fig. \ref{fig:0} shows the solution of the KGS equations (\ref{KGS})-(\ref{KGS ini})
with $d=1$, $\mu=\lambda=1$, $\psi_0(x)=\frac{1+i}{2}\sech(x^2)$, $\phi_0(x)=\frac{1}{2}\fe^{-x^2}$ and $\phi_1(x)=\frac{1}{\sqrt{2}}\fe^{-x^2}$ for
different $0<\eps\le 1$.

The highly temporal oscillatory nature of the solution to the KGS equations (\ref{KGS}) causes
severe burdens in practical computation, making the numerical
approximation extremely challenging and costly in the regime of
$0<\eps\ll 1$. Recently, different numerical methods
have been proposed and/or analyzed for the nonlinear
Klein-Gordon equation in the nonrelativistic limit regime
in which the solution shares similar oscillatory behavior as that
of the KGS equations (\ref{KGS}), including the FDTD
methods \cite{Dong}, exponential wave integrator Fourier pseudospectral (EWI-FP)
method \cite{Dong,BDZ}, asymptotic preserving (AP) method \cite{Faou},
stroboscopic average method (SAM) \cite{Chartier,Chartier2} and
multiscale time integrator Fourier pseudospectral (MTI-FP) method \cite{BZ}, etc.
Among them, the SAM and MTI-FP methods are uniformly convergent (UA)
for $\eps\in(0,1]$, while the FDTD, EWI-FP and AP methods are not.
From the practical computation point of view, the MTI-FP is much
simpler and thus more efficient than the SAM method.
The main aim of this paper is to propose and analyze
a MTI-FP method for the KGS equations (\ref{KGS})
in the nonrelativistic limit regime
by adapting a multiscale decomposition by
frequency to the solution at each time step and applying the Fourier pseudospectral
discretization and exponential wave integrators for spatial and temporal derivatives,
respectively. Two independent error bounds will be established for the  MTI-FP, which
imply that the MTI-FP converges uniformly and/or optimally for $0<\eps\le 1$.
The MTI-FP method is also applied to study numerically convergence rates
of the KGS equations (\ref{KGS}) to its limiting models (\ref{modellimit92})-(\ref{schw45})
and (\ref{modellimit4})-(\ref{schint4}).

The paper is organized as follows. In section \ref{sec: MD},
we introduce a multiscale decomposition for the KGS equations (\ref{KGS}) based on frequency.
A MTI-FP method is proposed in section \ref{sec:methods}, and its rigorous error bounds in energy space are established
in section \ref{sec: convergence}. Numerical results are reported in section \ref{sec: result}. Finally, some conclusions are drawn in section
\ref{sec: conclusion}. Throughout this paper,
we adopt the standard Sobolev spaces \cite{Adams} and use
the notation $A\lesssim B$ to represent that there exists a generic constant $C>0$,
which is independent of time step $\tau$ (or $n$), mesh size $h$ and $\eps$, such that $|A|\leq CB$.

\section{Multiscale decomposition}\label{sec: MD}
\label{sec:2}
Let $\tau=\Delta t>0$ be the time step size, and denote time steps by $t_n=n\tau$ for $n=0,1,\ldots$\;.
In this section, we present a multiscale decomposition
for the solution of (\ref{KGS}) on the time interval $[t_n, t_{n+1}]$
with given initial data at $t=t_n$ as
\begin{subequations}\label{Initial}
\begin{align}
&\phi(\mathbf{x},t_n)=\phi_0^n(\mathbf{x})=O(1),\qquad \partial_t\phi(\mathbf{x},t_n)=\frac{1}{\eps^2}\phi_1^n(\mathbf{x})=O\left(\frac{1}{\eps^{2}}\right),
\label{ini psi}\\
&\psi(\mathbf{x},t_n)=\psi_0^n(\mathbf{x})=O(1)\label{ini phi}.
\end{align}
\end{subequations}
Similarly to the analytical study of the nonlinear Klein-Gordon equation in the nonrelativistic limit regime
in \cite{Masmoudi,BZ}, we take an ansatz to the variable
$\phi(\mathbf{x},t):=\phi(\mathbf{x},t_n+s)$ of (\ref{KGS 1}) on the
time interval $[t_n, t_{n+1}]$  with (\ref{Initial}) as \cite{Zhao,BZ}
\begin{equation}\label{ansatz}
\phi(\mathbf{x},t_n+s)=\fe^{i\mu s/\eps^2}z^n(\mathbf{x},s)+\fe^{-i\mu s/\eps^2}\overline{z^n}(\mathbf{x},s)
+r^n(\mathbf{x},s), \quad \bx\in\bR^d,\ \ 0\leq s\leq\tau.
\end{equation}
Differentiating (\ref{ansatz}) with respect to $s$, we have
\begin{align}
\partial_s \phi(\mathbf{x},t_n+s)=&\fe^{i\mu s/\eps^2}\left[\partial_s z^n(\mathbf{x},s)+\frac{i\mu}{\eps^2}z^n(\mathbf{x},s)\right]
+\partial_sr^n(\mathbf{x},s)\nonumber\\
&+\fe^{-i\mu s/\eps^2}
\left[\partial_s\overline{ z^n}(\mathbf{x},s)-\frac{i\mu}{\eps^2}\overline{z^n}(\mathbf{x},s)\right],\quad \bx\in\bR^d,\ 0\leq s\leq\tau.\label{ansatad}
\end{align}
Plugging (\ref{ansatz}) into (\ref{KGS}), we get for $\bx\in\bR^d$, $0\leq s\leq\tau$ and $\psi(\bx,t_n+s)=:\psi^n(\bx,s)$
\beas
&&\fe^{i\mu s/\eps^2}\left[\eps^2\partial_{ss}{z}^n(\mathbf{x},s)+2i\mu\partial_s z^n(\mathbf{x},s)
-\Delta z^n(\mathbf{x},s)\right]\nonumber\\
&&+\fe^{-i\mu s/\eps^2}\left[\eps^2\partial_{ss}\overline{z^n}(\mathbf{x},s)
-2i\mu\partial_s\overline{z^n}(\mathbf{x},s)-\Delta \overline{z^n}(\mathbf{x},s)\right]\nonumber\\
&&+\eps^2\partial_{ss}r^n(\mathbf{x},s)-\Delta r^n(\mathbf{x},s)+\frac{\mu^2}{\eps^2}r^n(\mathbf{x},s)=\lambda\left|\psi^n(\mathbf{x},s)\right|^2.
\eeas
Substituting (\ref{ansatz}) into (\ref{KGS 2}) and
multiplying the above equation by $\fe^{-i\mu s/\eps^2}$ and $\fe^{i\mu s/\eps^2}$, respectively,
we can re-formulate the KGS equations (\ref{KGS}) for $\psi^n:=\psi^n(\bx,s)$,
$z^n:=z^n(\bx,s)$ and $r^n:=r^n(\bx,s)$  as
\begin{equation}\label{MDF}
\left\{
  \begin{split}
&i\partial_s\psi^n+\Delta\psi^n+\lambda\left(\fe^{\frac{i\mu s}{\eps^2}}z^n\psi^n+\fe^{-\frac{i\mu s}{\eps^2}}\overline{z^n}\psi^n+ r^n\psi^n\right)=0,\\
&2i\mu\partial_s z^n+\eps^2\partial_{ss}z^n-\Delta z^n=0, \qquad\qquad\qquad \bx\in\bR^d,\quad 0\leq s\leq\tau,\\
&\eps^2\partial_{ss}r^n-\Delta r^n+\frac{\mu^2}{\eps^2}r^n=\lambda|\psi^n|^2.
\end{split}
\right.
\ee
In order to find proper initial conditions for the system (\ref{MDF}),
setting $s=0$ in (\ref{ansatz}) and (\ref{ansatad}),
noticing (\ref{ini psi}), we obtain
\begin{numcases}
 \ z^n(\bx,0)+\overline{z^n}(\bx,0)+r^n(\bx,0)=\phi_0^n(\bx),\qquad \qquad\qquad\qquad \bx\in\bR^d,\label{init123}\\
 \frac{i\mu}{\eps^2}\left[z^n(\bx,0)-\overline{z^n}(\bx,0)\right]+\partial_s z^n(\bx,0)+\partial_s\overline{z^n}(\bx,0)
 +\partial_sr^n(\bx,0)=\frac{\phi_1^n(\bx)}{\eps^2}.\nonumber
\end{numcases}
Now we decompose the above initial data so as to: (i) equate $O\left(\frac{1}{\eps^2}\right)$ and $O(1)$ terms
in the second equation of (\ref{init123}), respectively, and (ii) be well-prepared for the second equation in (\ref{MDF})
when $0<\eps\ll 1$, i.e. $\partial_sz^n(\bx,0)$ is determined
from the second equation in (\ref{MDF}), by setting $\eps=0$ and $s=0$ \cite{Cai1,Cai2,BZ}:
\begin{equation}\label{FSW-i1}
\left\{
  \begin{split}
&z^n(\bx,0)+\overline{z^n}(\bx,0)=\phi_0^n(\bx),\qquad i\mu\left[z^n(\bx,0)-\overline{z^n}(\bx,0)\right]=\phi_1^n(\bx),\\
&2i\mu\partial_sz^n(\bx,0)-\Delta z^n(\bx,0)=0,
\qquad\qquad \qquad\qquad \qquad \bx\in\bR^d,\\
&r^n(\bx,0)=0, \qquad \partial_sr^n(\bx,0)+\partial_sz^n(\bx,0)+\partial_s\overline{z^n}(\bx,0)=0.
\end{split}
  \right.
\end{equation}
Solving (\ref{FSW-i1}) and noticing (\ref{ini phi}), we get the initial data for (\ref{MDF}) as
\begin{equation}\label{FSW-i21}
\left\{
  \begin{split}
&z^n(\bx,0)=\frac{1}{2}\left[\phi_0^n(\bx)-\frac{i}{\mu}\phi_1^n(\bx)\right],
\quad\partial_sz^n(\bx,0)=-\frac{i}{2\mu}\Delta z^n(\bx,0),\quad\bx\in\bR^d,\\
&\psi^n(\bx,0)=\psi_0^n(\bx), \quad r^n(\bx,0)=0, \quad \partial_sr^n(\bx,0)=-\partial_sz^n(\bx,0)-\partial_s\overline{z^n}(\bx,0).
\end{split}
  \right.
\end{equation}
The above decomposition can be called as multiscale decomposition by frequency (MDF).
In fact, it can also be regarded as to decompose slow waves at $\eps^2$-wavelength and fast waves at other wavelengths,
thus it can also be called as fast-slow frequency decomposition.
After solving the decomposed system (\ref{MDF}) with the initial data (\ref{FSW-i21}), we get
$\psi^n(\bx,\tau)$, $z^n(\bx,\tau)$, $\partial_s z^n(\bx,\tau)$, $r^n(\bx,\tau)$ and
 $\partial_s r^n(\bx,\tau)$.
Then we can reconstruct the solution to the KGS equations
(\ref{KGS}) at $t=t_{n+1}$ by setting $s=\tau$ in (\ref{ansatz}) and (\ref{ansatad}), i.e.
\begin{equation}\label{u n+1}
\left\{
\begin{split}
&\psi(\bx,t_{n+1})=\psi^n(\bx,\tau)=:\psi_0^{n+1}(\bx),\\
&\phi(\bx,t_{n+1})=\fe^{i\mu \tau/\eps^2}z^n(\bx,\tau)+\fe^{-i\mu\tau/\eps^2}\overline{z^n}(\bx,\tau)
+r^n(\bx,\tau)=:\phi_0^{n+1}(\bx),\\
&\partial_t \phi(\bx,t_{n+1})=\frac{1}{\eps^2}\phi_1^{n+1}(\bx),\qquad \qquad\bx\in\bR^d,\\
\end{split}\right.
\end{equation}
with
\begin{align*}
\phi_1^{n+1}(\bx):=&\fe^{i\mu\tau/\eps^2}\left[\eps^2\partial_sz^n(\bx,\tau)+i\mu z^n(\bx,\tau)\right]+\fe^{-i\mu\tau/\eps^2}
\left[\eps^2\partial_s\overline{z^n}(\bx,\tau)-i\mu\overline{z^n}(\bx,\tau)\right]\\
&+\eps^2\partial_sr^n(\bx,\tau).
\end{align*}
In summary, the MDF proceeds as a decomposition--solution-reconstruction flow at each time interval,
and this makes it essentially different from the classical modulated Fourier expansion \cite{Lubich1,Lubich2,Lubich3,Cohen,Cohen1,Cohen2,Cohen3} which only carries out
the decomposition at the initial time $t=0$.

\section{A MTI-FP method}
\label{sec:methods}

For the simplicity of notations and without loss of generality, we
take $\mu=\lambda=1$ in (\ref{KGS}) and present our numerical method in
one space dimension (1D). Generalizations to higher dimensions are
straightforward and the results remain valid. We truncate the whole-space problem
(\ref{KGS}) into a finite interval $\Omega=(a,b)$ with
periodic boundary conditions, where due to the fast decay of the solution at far field, the truncation error can be negligible by choosing $a,b$ sufficiently large. In 1D, the problem (\ref{KGS})
with $\mu=\lambda=1$ collapses to
\begin{numcases}
\, i\partial_{t}\psi(x,t) + \partial_{xx}\psi(x,t) +\phi(x,t)\psi(x,t)=0,\quad x\in\Omega,\quad t>0,\nonumber\\
\eps^2\partial_{tt}\phi(x,t)-\partial_{xx}\phi(x,t)
+ \frac{1}{\eps^2}\phi(x,t) = |\psi(x,t)|^2, \quad x\in\Omega,\ t>0,\nonumber\\
\phi(a,t)=\phi(b,t),\quad \partial_x\phi(a,t)=\partial_x\phi(b,t),\quad
\psi(a,t)=\psi(b,t)\quad t\geq 0,\label{KGS-trun}\\
\phi(x,0)=\phi_0(x),\quad \partial_t \phi(x,0)=\frac{\phi_1(x)}{\eps^2},\quad\psi(x,0)=\psi_0(x),\quad x\in\overline{\Omega}=[a,b].\nonumber
\end{numcases}
We remark that the boundary conditions considered here are
inspired by the inherent physical nature of the system and they
have been widely used in the literatures for the simulation of the KGS equations
\cite{BaoYang,BDW,KGS5,KGS4}.

Consequently, for $n\ge0$, the decomposed system MDF (\ref{MDF}) in 1D collapses to
\begin{numcases}
\ i\partial_{s}\psi^n+\partial_{xx}\psi^n+\fe^{is/\eps^2}z^n\psi^n
+\fe^{-is/\eps^2}\overline{z^n}\psi^n
+r^n\psi^n=0.\nonumber\\
2i\partial_s z^n+ \eps^2\partial_{ss}z^n-\partial_{xx}z^n=0,\quad
a<x<b, \ 0<s\leq\tau, \label{MDF trun}\\
\eps^2\partial_{ss}r^n-\partial_{xx} r^n+\frac{1}{\eps^2}r^n=\left|\psi^n\right|^2,\nonumber
\end{numcases}
The initial and boundary conditions for the above system are
\begin{equation}\label{MDF ini}
\left\{
\begin{split}
&z^n(a,s)=z^n(b,s), \quad \partial_xz^n(a,s)=\partial_xz^n(b,s),\\
&r^n(a,s)=r^n(b,s),
\quad \partial_xr^n(a,s)=\partial_xr^n(b,s), \\
& \psi^n(a,s)=\psi^n(b,s),\quad \partial_x\psi^n(a,s)=\partial_x\psi^n(b,s),\quad 0\leq s\leq\tau;\\
&z^n(x,0)=\frac{1}{2}\left[\phi_0^n(x)-i\phi_1^n(x)\right],\quad \partial_s z^n(x,0)=-\frac{i}{2}\partial_{xx} z^n(x,0),\\
&r^n(x,0)=0, \qquad \partial_s r^n(x,0)=-\partial_s z^n(x,0)-\partial_s\overline{z^n}(x,0),\\
&\psi^n(x,0)=\psi_0^n(x),\qquad a\le x\le b.
\end{split}
\right.
\end{equation}
In what follows, we present a numerical integrator
Fourier pseudospectral discretization for the MDF
(\ref{MDF trun}) with (\ref{MDF ini}), which applies the Fourier
pseudospectral discretization to spatial derivatives
followed by using some proper exponential wave integrators (EWI) for
temporal discretizations in phase (Fourier) space.

Choose the mesh size $h:=\Delta x=(b-a)/N$ with $N$ a positive integer
 and denote grid points as
$x_j:=a+jh$ for  $j=0,1,\ldots, N$. Define
\beas
&&X_N:=\mbox{span}\left\{\fe^{i\mu_l(x-a)}\ :\ x\in\overline{\Omega},\  \mu_l=\frac{2\pi l}{b-a},\ l=-\frac{N}{2},\ldots,\frac{N}{2}-1\right\},\\
&&Y_N:=\left\{\bv=(v_0,v_1,\ldots,v_N)\in\bC^{N+1}\ :\ v_0=v_N\right\},\ \|\bv\|_{l^2}=h\sum_{j=0}^{N-1}|v_j|^2.
\eeas
For a periodic function $v(x)$ on $\overline{\Omega}$ and a vector $\bv\in Y_N$,
let $P_N: L^2(\Omega)\rightarrow X_N$ be the standard $L^2$-projection operator,
 and $I_N: C(\Omega)\rightarrow X_N$ or  $Y_N \rightarrow X_N$
be the trigonometric interpolation operator \cite{book-sp-GO, ST}, i.e.
\begin{equation}\label{project oper}
(P_Nv)(x)=\sum_{l=-N/2}^{N/2-1}\widehat{v}_l\fe^{i\mu_l(x-a)},\quad (I_N\bv)(x)=\sum_{l=-N/2}^{N/2-1}\widetilde{\bv}_l\fe^{i\mu_l(x-a)},
\end{equation}
where $\widehat{v}_l$ and $\widetilde{\bv}_l$ are the Fourier and discrete Fourier transform
coefficients of the periodic function $v(x)$ and vector $\bv$, respectively, defined as
\begin{equation}\label{sine tans}
\widehat{v}_l=\frac{1}{b-a}\int_a^b v(x)\fe^{-i\mu_l(x-a)}dx,\qquad \widetilde{\bv}_l=\frac{1}{N}\sum_{j=0}^{N-1}v_j\fe^{-i\mu_l(x_j-a)}.
\end{equation}

Then a Fourier spectral method for discretizing (\ref{MDF trun}) reads:\\
Find $z_{N}^n:=z_{N}^n(x,s),\ r^n_N:=r^n_N(x,s),\ \psi^n_N:=\psi^n_N(x,s)\in X_N$ for $0\le s\leq\tau$, i.e.
\begin{equation}\left\{\begin{split}
 &z_{N}^n(x,s)=\sum_{l=-N/2}^{N/2-1}\widehat{(z_{N}^n)}_l(s)\fe^{i\mu_l(x-a)},\quad
  r_N^n(x,s)=\sum_{l=-N/2}^{N/2-1}\widehat{(r_N^n)}_l(s)\fe^{i\mu_l(x-a)},\\
 &\psi_{N}^n(x,s)=\sum_{l=-N/2}^{N/2-1}\widehat{(\psi_N^n)}_l(s)\fe^{i\mu_l(x-a)},\label{z r PN}
\end{split}\right.
\end{equation}
such that
\begin{equation}\label{MDF PN}
\left\{
\begin{split}
&i\partial_{s}\psi_N^n+\partial_{xx}\psi_N^n+\fe^{is/\eps^2}P_N(z^n_N\psi^n_N)+
\fe^{-is/\eps^2}P_N(\overline{z^n_N}\psi^n_N)
+P_N(r^n_N\psi^n_N)=0,\\
&2i\partial_s z_{N}^n+\eps^2\partial_{ss}z_{N}^n-\partial_{xx}z_{N}^n=0,\qquad 0<s\leq\tau,\quad a< x< b,\\
&\eps^2\partial_{ss}r_N^n-\partial_{xx}r_N^n+\frac{1}{\eps^2}r_N^n=P_N\left(\left|\psi_{N}^n\right|^2\right).
\end{split}
\right.
\end{equation}
Substituting (\ref{z r PN}) into (\ref{MDF PN}) and noticing the orthogonality of the Fourier basis, we get
\begin{numcases}
\ i\widehat{(\psi_N^n)}_l'(s)-\mu_l^2\widehat{(\phi_N^n)}_l(s)+\fe^{\frac{is}{\eps^2}}\widehat{(z^n_N\psi^n_N)}_l(s)
+\fe^{-\frac{is}{\eps^2}}\widehat{(\overline{z^n_N}\psi^n_N)}_l(s)+\widehat{(r^n_N\psi^n_N)}_l(s)=0,\nonumber\\
2i\widehat{(z_{N}^n)}_l'(s)+\eps^2\widehat{(z_{N}^n)}_l''(s)+\mu_l^2\widehat{(z_{N}^n)}_l(s)=0,\qquad \qquad 0<s\leq\tau,\label{MDF l} \\
\eps^2\widehat{(r_N^n)}_l''(s)+\left(\mu_l^2+\frac{1}
{\eps^2}\right)\widehat{(r_N^n)}_l(s)=\widehat{(|\psi^n_N|^2)}_l(s),\quad
-\frac{N}{2}\le l\le \frac{N}{2}-1.\nonumber
\end{numcases}
For each $-N/2\le l\le N/2-1$, we can rewrite (\ref{MDF l}) by using the variation-of-constant formula as
\begin{subequations}\label{VCF}
\begin{align}
&\widehat{(z_{N}^n)}_l(s)=a_l(s)\widehat{(z_{N}^n)}_l(0)+
\eps^2b_l(s)\widehat{(z_{N}^n)}_l'(0),\qquad \qquad 0\leq s\leq \tau,\label{VCF z}\\
&\widehat{(r_N^n)}_l(s)=\frac{\sin(\omega_l s)}{\omega_l}\widehat{(r_N^n)}_l'(0)
+\int_0^s\frac{\sin\left(\omega_l(s-\theta)\right)}
{\eps^2\omega_l}\widehat{(|\psi_N^n|^2)}_l(\theta)\,d\theta,\label{VCF r}\\
&\widehat{(\psi_N^n)}_l(s)=\fe^{-i\mu_l^2 s}\widehat{(\psi_N^n)}_l(0)+
i\fe^{-i\mu_l^2s}\int_0^s\fe^{i\left(\mu_l^2+\frac{1}{\eps^2}\right)\theta}
\widehat{(z_N^n\psi^n_N)}_l(\theta)\,d\theta\label{VCF psi}\\
&\qquad\quad\quad\ +i\fe^{-i\mu_l^2s}\int_0^s\fe^{i\left(\mu_l^2-\frac{1}{\eps^2}\right)\theta}
\widehat{(\overline{z_N^n}\psi^n_N)}_l(\theta)\,d\theta
+i\int_0^s\fe^{i\mu_l^2(\theta-s)}\widehat{(r_N^n\psi^n_N)}_l(\theta)\,d\theta,\nonumber
\end{align}
\end{subequations}
where
\begin{numcases}
\ a_l(s):=\frac{\lambda^+_l\fe^{is\lambda^-_l}-\lambda^-_l\fe^{is\lambda^+_l}}{\lambda^+_l-\lambda^-_l},\quad\ \ b_l(s):=i\frac{\fe^{is\lambda^+_l}-
\fe^{is\lambda^-_l}}{\eps^2(\lambda^-_l-\lambda^+_l)}, \quad 0\le s\le \tau,\nonumber\\
\lambda^\pm_l=-\frac{1}{\eps^2}\left(1\pm\sqrt{1+\mu_l^2\eps^2}\right),\quad\ \ \omega_l=\frac{1}{\eps^2}\sqrt{1+\mu_l^2\eps^2}.\label{nd pm def}
\end{numcases}
Differentiating (\ref{VCF z}) and (\ref{VCF r}) with respect to $s$, we obtain
\begin{subequations}\label{dz dr}
\begin{align}
&\widehat{(z_{N}^n)}_l'(s)=a_l'(s)\widehat{(z_{N}^n)}_l(0)+\eps^2b_l'(s)\widehat{(z_{N}^n)}_l'(0),\quad 0\leq s\leq \tau,\\
&\widehat{(r_N^n)}_l'(s)=\cos(\omega_l s)\widehat{(r_N^n)}_l'(0)
+\int_0^s\frac{\cos\left(\omega_l(s-\theta)\right)}{\eps^2}\widehat{(|\psi_N^n|^2)}_l(\theta)\,d\theta,
\label{rnn653}
\end{align}
\end{subequations}
where
\begin{equation*}
a_l'(s)=i\lambda^+_l\lambda^-_l\frac{\fe^{is\lambda^-_l}-\fe^{is\lambda^+_l}}{\lambda^+_l-\lambda^-_l},\quad\ b_l'(s)=\frac{\lambda^+_l\fe^{is\lambda^+_l}-
\lambda^-_l\fe^{is\lambda^-_l}}{\eps^2(\lambda^+_l-\lambda^-_l)}, \quad\ 0\le s\le \tau.
\end{equation*}
Taking $s=\tau$ in (\ref{VCF}) and (\ref{dz dr}), we immediately get
\be\label{z tau}
\begin{split}
&\widehat{(z_{N}^n)}_l(\tau)=a_l(\tau)\widehat{(z_{N}^n)}_l(0)+
\eps^2b_l(\tau)\widehat{(z_{N}^n)}_l'(0),\\
&\widehat{(z_{N}^n)}_l'(\tau)=a_l'(\tau)\widehat{(z_{N}^n)}_l(0)+
\eps^2b_l'(\tau)\widehat{(z_{N}^n)}_l'(0).
\end{split}
\ee
In order to approximate the definite integrals in (\ref{VCF r}), (\ref{VCF psi}) and (\ref{rnn653}) with $s=\tau$,
we adapt the Gautschi's type quadrature \cite{Grimm,Grimm1,Hochbruck,Hochbruck,Gaustchi,Zhao,BZ}
\[\small
\int_0^\tau\hspace{-.4mm} e^{i\delta \theta}f(\theta)\,d\theta\hspace{-.4mm}\approx\hspace{-1.5mm}
\int_0^\tau \hspace{-.4mm}e^{i\delta \theta}\left[f(0)+\theta f^\prime(0)\right]\,d\theta
=\frac{i-ie^{i\delta \tau}}{\delta} f(0)+\frac{(1-i\delta \tau)e^{i\delta \tau}-1}{\delta^2}f^\prime(0)
\]
except the last term in (\ref{VCF psi}) which is approximated
via a combination of the Gautschi's and Deuflhard's quadrature as \cite{Zhao,Deuflhard,Lubich2}
\bea
\int_0^\tau\fe^{i\mu_l^2(\theta-\tau)}
\widehat{(r_N^n\psi^n_N)}_l(\theta)\,d\theta&\approx&\int_0^\tau\fe^{i\mu_l^2(\theta-\tau)} \left[\widehat{(A_1)}_l(\theta)+\theta\; \widehat{(A_2)}_l(\theta)\right]d\theta \nonumber\\
&\approx&\frac{\tau}{2}\left[\widehat{(A_1)}_l(\tau)+\tau\, \widehat{(A_2)}_l(\tau)\right],
\eea
where $A_1(x,\theta):=r_N^n(x,\theta)\psi^n_N(x,0)$ and $A_2(x,\theta):=r_N^n(x,\theta)\partial_s\psi^n_N(x,0)$
for $0\le \theta\le \tau$ and thus $A_1(x,0)=A_2(x,0)\equiv 0$ since $r_N^n(x,0)\equiv 0$.
Thus (\ref{VCF r}), (\ref{VCF psi}) and (\ref{rnn653}) with $s=\tau$ can be approximated as
\begin{numcases}
\ \widehat{(r_N^n)}_l(\tau)\approx\frac{\sin(\omega_l \tau)}{\omega_l}\widehat{(r_N^n)}_l'(0)
+p_l(\tau)\widehat{(|\psi_N^n|^2)}_l(0)+q_l(\tau)\widehat{(|\psi_N^n|^2)}_l'(0),
\nonumber\\
\widehat{(r_N^n)}_l'(\tau)\approx\cos(\omega_l \tau)\widehat{(r_N^n)}_l'(0)
+p_l'(\tau)\widehat{(|\psi_N^n|^2)}_l(0)+q_l'(\tau)\widehat{(|\psi_N^n|^2)}_l'(0),\quad\label{z r app}\\
\widehat{(\psi_N^n)}_l(\tau)\approx\fe^{-i\mu_l^2\tau}\widehat{(\psi_N^n)}_l(0)+
c_l^+(\tau)\widehat{(z_N^n\psi^n_N)}_l(0)+d_l^+(\tau)\widehat{(z_N^n\psi^n_N)}_l'(0)\nonumber\\
\qquad\qquad\ \ +c_l^-(\tau)\widehat{(\overline{z_N^n}\psi^n_N)}_l(0)
+d_l^-(\tau)\widehat{(\overline{z_N^n}\psi^n_N)}_l'(0)
+\frac{i\tau}{2}\left[\widehat{(A_1)}_l(\tau)+\tau\, \widehat{(A_2)}_l(\tau)\right],\nonumber
\end{numcases}
where $p_l(\tau)$, $q_l(\tau)$, $p_l^\prime(\tau)$, $q_l^\prime(\tau)$, $c_l^\pm(\tau)$
and $d_l^\pm(\tau)$ are given in Appendix A.
For the derivatives, we can compute them as
\begin{align*}
&\widehat{(|\psi_N^n|^2)}_l'(0)=2\widehat{\left(\mathrm{Re}\{\overline{\psi_N^n}\partial_s\psi_N^n\}\right)}_l(0),
\quad\widehat{(z_N^n\psi^n_N)}_l'(0)=\widehat{(\partial_sz_N^n\psi^n_N)}_l(0)+\widehat{(z_N^n\partial_s\psi^n_N)}_l(0),
\end{align*}
where $\mathrm{Re}\{z\}$ denotes the real part of a complex number $z$
and for $-\frac{N}{2}\leq l\leq\frac{N}{2}-1,$
\[
\widehat{(z_N^n)}_l'(0)=\frac{i}{2}\frac{\sin(\mu_l^2\tau)}{\tau}\widehat{(z_N^n)}_l(0),\quad
\widehat{(\psi_N^n)}_l'(0)=-i\frac{\sin(\mu_l^2\tau)}{\tau}\widehat{(\psi_N^n)}_l(0)
 +i\widehat{(\phi_N^n\psi_N^n)}_l(0),
\]
which are approximations of $\partial_s z^n(x,0)=-\frac{i}{2}\partial_{xx} z^n(x,0)$ and
$\partial_s\psi^n(x,0)=i\partial_{xx}\psi^n(x,0)\\+i\phi^n(x,0)\psi^n(x,0)$, respectively \cite{BZ}.
Inserting (\ref{z tau}) and (\ref{z r app}) into (\ref{z r PN}) with setting $s=\tau$, and
noticing (\ref{u n+1}), we immediately obtain a multiscale time integrator Fourier spectral method based on the MDF (\ref{MDF trun}) for the problem (\ref{KGS-trun}).

In practice, the integrals for computing the Fourier transform
coefficients in (\ref{sine tans}), (\ref{VCF}) and (\ref{dz dr})
are usually approximated by the numerical
 quadratures \cite{Dong, BDZ,ST} given in (\ref{dz dr}). Let $\Phi_j^n$, $\dot{\Phi}_j^n$ and $\Psi_j^n$ be approximations of $\phi(x_j,t_n)$, $\partial_t\phi(x_j,t_n)$ and $\psi(x_j,t_n)$,
 respectively; $Z_{j}^{n+1},$ $\dot{Z}_{j}^{n+1},$ $R_j^{n+1}$ and $\dot{R}_j^{n+1}$ be approximations of $z^n(x_j,\tau),$ $\partial_sz^n(x_j,\tau),$ $r^n(x_j,\tau)$ and $\partial_s r^n(x_j,\tau)$, respectively,
 for $j=0,1,\ldots,N$. Choosing $\Phi_j^0=\phi_0(x_j)$, $\dot{\Phi}_j^0=\phi_1(x_j)/\eps^2$ and $\Psi_j^0=\psi_0(x_j)$ for
 $0\le j\le N$, 
 a multiscale time integrator Fourier pseudospectral (MTI-FP) discretization for the problem (\ref{KGS-trun}) reads  for $n\ge0,$
\begin{numcases}
\,\Phi^{n+1}_j=\fe^{i\tau/\eps^2}Z_{j}^{n+1}+\fe^{-i\tau/\eps^2}
\overline{Z_{j}^{n+1}}+R^{n+1}_j,\quad  j=0,1,\ldots,N,\nonumber\\
\dot{\Phi}^{n+1}_j=\fe^{i\tau/\eps^2}\left(\dot{Z}_{j}^{n+1}+\frac{i}{\eps^2}Z_{j}^{n+1}\right)
+\fe^{-i\tau/\eps^2}\left(\overline{\dot{Z}_{j}^{n+1}}-\frac{i}{\eps^2}
\overline{Z_{j}^{n+1}}\right)+\dot{R}^{n+1}_j,\nonumber\\
\Psi_{j}^{n+1}=\sum_{l=-\frac{N}{2}}^{\frac{N}{2}-1}\widetilde{(\Psi^{n+1})}_l\fe^{i\mu_l(x_j-a)},
\label{MTI-FP S}
\end{numcases}
where
\begin{numcases}
\,Z_{j}^{n+1}=\sum_{l=-\frac{N}{2}}^{\frac{N}{2}-1}\widetilde{(Z^{n+1})}_l\fe^{i\mu_l(x_j-a)},\quad
R_{j}^{n+1}=\sum_{l=-\frac{N}{2}}^{\frac{N}{2}-1}\widetilde{(R^{n+1})}_l\fe^{i\mu_l(x_j-a)},\nonumber\\
\dot{Z}_{j}^{n+1}=\sum_{l=-\frac{N}{2}}^{\frac{N}{2}-1}\widetilde{(\dot{Z}^{n+1})}_l\fe^{i\mu_l(x_j-a)},\quad
\dot{R}_{j}^{n+1}=\sum_{l=-\frac{N}{2}}^{\frac{N}{2}-1}\widetilde{(\dot{R}^{n+1})}_l\fe^{i\mu_l(x_j-a)},\nonumber
\end{numcases}
and for $-N/2\le l\le N/2-1$,
\begin{subequations} \label{sine psu coeff}
\begin{align}\small
\widetilde{(Z^{n+1})}_l\hspace{-1mm}=&a_l(\tau)\widetilde{(Z^{0})}_l+\eps^2b_l(\tau)\widetilde{(\dot{Z}^{0})}_l,\,
\widetilde{(\dot{Z}^{n+1})}_l\hspace{-1mm}=a_l'(\tau)\widetilde{(Z^0)}_l+\eps^2b_l'(\tau)\widetilde{(\dot{Z}^0)}_l,\label{sine psu coeff z}\\
\widetilde{(R^{n+1})}_l=&\frac{\sin(\omega_l \tau)}{\omega_l}\widetilde{(\dot{R}^{0})}_l+p_l(\tau)\widetilde{(|\Psi^n|^2)}_l
+2q_l(\tau)\widetilde{(\mathrm{Re}\{\overline{\Psi^n}\dot{\Psi}^n\})}_l,\label{sine psu coeff r}\\
\widetilde{(\dot{R}^{n+1})}_l=&\cos(\omega_l \tau)\widetilde{(\dot{R}^0)}_l+p_l'(\tau)\widetilde{(|\Psi^n|^2)}_l
+2q_l'(\tau)\widetilde{(\mathrm{Re}\{\overline{\Psi^n}\dot{\Psi}^n\})}_l,\label{sine psu coeff dr}\\
\widetilde{(\Psi^{n+1})}_l=&\fe^{-i\mu_l^2\tau}\widetilde{(\Psi^n)}_l+
c_l^+(\tau)\widetilde{(Z^0\Psi^n)}_l+d_l^+(\tau)\left[\widetilde{(\dot{Z}^0\Psi^n)}_l+
\widetilde{(Z^0\dot{\Psi}^n)}_l\right]
\nonumber\\
&+c_l^-(\tau)\widetilde{(\overline{Z^0}\Psi^n)}_l+d_l^-(\tau)\left[\widetilde{(\overline{\dot{Z}^0}\Psi^n)}_l
+\widetilde{(\overline{Z^0}\dot{\Psi}^n)}_l\right]
+\frac{i\tau}{2}\widetilde{(R^{n+1}\Psi^n)}_l\nonumber\\
&+\frac{i\tau^2}{2}\widetilde{(R^{n+1}\dot{\Psi}^n)}_l,\label{sine psu coeff psi}
\end{align}
\end{subequations}
with
\begin{subequations}\label{MTI-FP E}
\begin{align}
&\widetilde{\left(Z^{0}\right)}_{l}=\frac{1}{2}\left[\widetilde{\left(\Phi^n\right)}_l
-i\eps^2\widetilde{\left(\dot{\Phi}^n\right)}_l\right],\quad
\widetilde{\left(\dot{Z}^0\right)}_{l}=\frac{i\sin\left(\mu_l^2\tau\right)}{2\tau}\widetilde{(Z^0)}_l,\\
&\widetilde{\left(\dot{R}^0\right)}_l=-\widetilde{\left(\dot{Z}^0\right)}_{l}
-\widetilde{\left(\overline{\dot{Z}^0}\right)}_{l},\quad \widetilde{(\dot{\Psi}^n)}_l
=-\frac{i\sin(\mu_l^2\tau)}{\tau}\widetilde{(\Psi^n)}_l
+i\widetilde{(\Phi^n\Psi^n)}_l.\label{MTI-SP E1}
\end{align}
\end{subequations}
This MTI-FP method for the KGS equations (\ref{KGS-trun}) (or (\ref{KGS})) is explicit, accurate,
easy to implement and very efficient due to the  discrete fast Fourier transform. The memory
cost is $O(N)$ and the computational cost per time step is $O(N \log N )$.

\section{Uniform convergence of MTI-FP} \label{sec: convergence}
In this section, we establish an error bound
for the MTI-FP method (\ref{MTI-FP S}) of the KGS (\ref{KGS-trun}), which is uniformly
for $\eps\in (0,1]$. Let $0<T< T^*$ with $T^*$ the maximum existence time of the solution
of the problem (\ref{KGS-trun}), motivated by the formal asymptotic results (\ref{ans}) and (\ref{modellimit92})
as well as the numerical results (cf. Fig. 1.1), we make the following assumptions
on the solution of the problem (\ref{KGS-trun}) -- there exists an integer $m_0\ge 4$
such that $\phi\in C^1([0,T];H_p^{m_0+4}(\Omega))$, $\psi\in
C([0,T];H_p^{m_0+2}(\Omega))\cap C^1([0,T];H_p^{m_0}(\Omega))\cap
C^2([0,T];H_p^{m_0-2}(\Omega))$ and
\be\label{assumption}
\begin{split}
&\left\|\phi\right\|_{L^\infty([0,T]; H^{m_0+4})}+\eps^2 \left\|\partial_t\phi\right\|_{L^\infty([0,T]; H^{m_0+4})}\lesssim 1,\\
&\left\|\psi\right\|_{L^\infty([0,T]; H^{m_0+2})}+\left\|\partial_t\psi\right\|_{L^\infty([0,T]; H^{m_0})}+
\eps^2\left\|\partial_{tt}\psi\right\|_{L^\infty([0,T]; H^{m_0-2})}\lesssim 1,
\end{split}
\ee
where $H_p^m(\Omega)=\left\{f(x)\in H^m(\Omega)\ |\ f^{(k)}(a)=f^{(k)}(b), k=0,1,\ldots,m-1\right\}\subset H^m(\Omega)$.
From the first equation in (\ref{KGS-trun}), i.e. the Schr\"{o}dinger equation, it is easy to see that
$$i\partial_t\rho(x,t) +\partial_x\left[\overline{\psi}\partial_x\psi(x,t)-
\psi\partial_x\overline{\psi}(x,t)\right]=0,\quad x\in\Omega,\ t>0,$$
where $\rho(x,t):=|\psi(x,t)|^2$.
Then under the assumption (\ref{assumption}), we have
\begin{equation}\label{abs psi}
\begin{split}
&\rho \in C\left([0,T];H_p^{m_0+2}(\Omega)\right)\cap C^1\left([0,T];H_p^{m_0}(\Omega)\right)\cap
C^2\left([0,T];H_p^{m_0-2}(\Omega)\right),\\
&\|\partial_t^k\rho\|_{L^\infty([0,T];H^{m_0+2-2k}(\Omega))}\lesssim1,\quad k=0,1,2.
\end{split}
\end{equation}
Denote $C_\phi=\max_{0<\eps\le 1}\left\{\left\|\phi\right\|_{L^\infty([0,T];H^{m_0+4}(\Omega))},
\eps^2\left\|\partial_t\phi\right\|_{L^\infty([0,T];H^{m_0+4}(\Omega))}\right\}$ and
$C_\psi=\max_{0<\eps\le 1}\left\{\left\|\psi\right\|_{L^\infty([0,T];H^{m_0+2}(\Omega))}\right\}$.

Let $\Phi^n=(\Phi_0^n,\Phi_1^n,\ldots,\Phi_{N}^n)\in Y_N$, $\Psi^n=(\Psi_0^n,\Psi_1^n,\ldots,\Psi_{N}^n)\in Y_N$
and $\dot{\Phi}^n=(\dot{\Phi}_0^n$, $\dot{\Phi}_1^n,\ldots,\dot{\Phi}_{N}^n)\in Y_N$ ($n\ge0$) are the numerical solution
obtained from the MTI-FP method (\ref{MTI-FP S})-(\ref{MTI-FP E}), denote
their interpolations as
\be
\phi_I^n(x):=(I_N\Phi^n)(x),\quad\psi_I^n(x):=(I_N\Psi^n)(x),\quad
\dot{\phi}_I^n(x):=(I_N\dot{\Phi}^n)(x),\quad x\in\overline{\Omega},
\ee
and define the error functions as
\be\label{error fun}
\begin{split}
&e_\phi^n(x):=\phi(x,t_n)-\phi_I^n(x),\qquad
e_\psi^n(x):=\psi(x,t_n)-\psi_I^n(x),\\
&\dot{e}_\phi^n(x):=\partial_t\phi(x,t_n)-\dot{\phi}_I^n(x),\qquad
 x\in\overline{\Omega},\qquad 0\leq n\leq\frac{T}{\tau};
\end{split}
\ee
then we have the following error estimates for the MTI-FP method (\ref{MTI-FP S})-(\ref{MTI-FP E}).

\begin{theorem}[Error bounds of MTI-FP]\label{main thm}
Under the assumption (\ref{assumption}),
there exist two constants $0<h_0\leq1$ and $0<\tau_0\leq1$ sufficiently small and
independent of $\eps$
such that for any $0<\eps\leq1$, when $0<h\leq h_0$ and $0<\tau\leq\tau_0$, we have
\begin{align}
&\left\|e_\phi^n\right\|_{H^2}+\left\|e_\psi^n\right\|_{H^2}+\eps^2\left\|\dot{e}_\phi^n\right\|_{H^2}\lesssim h^{m_0}
+\frac{\tau^2}{\eps^2},
\label{MTI error bound1}\\
&\left\|e_\phi^n\right\|_{H^2}+\left\|e_\psi^n\right\|_{H^2}+\eps^2\left\|\dot{e}_\phi^n\right\|_{H^2}\lesssim h^{m_0}
+\eps^2, \qquad 0\leq n\leq\frac{T}{\tau},\label{MTI error bound2}\\
&\left\|\phi^n_I\right\|_{H^2}\leq C_\phi+1,\qquad \left\|\psi^n_I\right\|_{H^2}\leq C_\psi+1,
\qquad\left\|\dot{\phi}^n_I\right\|_{H^2}\leq \frac{C_\phi+1}{\eps^2}.
\label{MTI sol bound}
\end{align}
Thus, by taking the minimum of the two error bounds (\ref{MTI error bound1}) and (\ref{MTI error bound2})
for $\eps\in(0,1]$, we obtain a uniform error bound with respect to $\eps\in(0,1]$ for $0\le n\le \frac{T}{\tau}$
\begin{equation}\label{uniform bound}
\left\|e_\phi^n\right\|_{H^2}+\left\|e_\psi^n\right\|_{H^2}+\eps^2\left\|\dot{e}_\phi^n\right\|_{H^2}\lesssim h^{m_0}+\min_{0<\eps\leq1}\left\{\frac{\tau^2}{\eps^2},\eps^2\right\}\lesssim h^{m_0}+\tau.
\end{equation}
\end{theorem}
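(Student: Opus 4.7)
\medskip

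\noindent\textbf{Proof plan.} The plan is to use the multiscale decomposition from Section~\ref{sec: MD} as the analytical backbone, and to bound the error of the MTI-FP scheme against the exact solution by comparing, step by step, the discrete update (\ref{MTI-FP S})--(\ref{MTI-FP E}) with the true flow of the decomposed system (\ref{MDF trun}) started from the exact data at time $t_n$. At each step I will split the error into three pieces: (a) the \emph{ansatz truncation error} coming from neglecting $o(\varepsilon)$ corrections in (\ref{ansatz}), (b) the \emph{local temporal error} of the Gautschi / Deuflhard type exponential wave integrator, and (c) the \emph{spatial error} of the Fourier pseudospectral projection. The strategy is an induction on $n$, carrying along the discrete $H^2$ bounds (\ref{MTI sol bound}) so that all nonlinear products can be controlled by the bilinear estimate $\|fg\|_{H^2}\lesssim\|f\|_{H^2}\|g\|_{H^2}$.

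\medskip

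\noindent\textbf{Step 1: regularity of the multiscale decomposition.} First I would set up, for each $n$, the exact ``reference" MDF solution $(z^n,r^n,\psi^n)$ of (\ref{MDF trun})--(\ref{MDF ini}) with the \emph{exact} data $(\phi(\cdot,t_n),\partial_t\phi(\cdot,t_n),\psi(\cdot,t_n))$, and extract from (\ref{assumption}) together with (\ref{FSW-i21}) the $\varepsilon$-uniform Sobolev bounds
\begin{equation*}
\|z^n\|_{H^{m_0+2}}+\|\partial_s z^n\|_{H^{m_0}}+\varepsilon^{2}\|\partial_{ss} z^n\|_{H^{m_0-2}}\lesssim 1,\qquad \|r^n\|_{H^{m_0+2}}+\varepsilon^{2}\|\partial_s r^n\|_{H^{m_0}}\lesssim \varepsilon^{2},
\end{equation*}
the latter being the key observation: because $r^n(\cdot,0)=0$ and the forcing $|\psi^n|^2$ is $\varepsilon$-independent while the operator on the left of the third line of (\ref{MDF trun}) has symbol $\mu_l^2+\varepsilon^{-2}$, Duhamel gives $r^n=O(\varepsilon^2)$ uniformly. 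This is what ultimately produces the second bound (\ref{MTI error bound2}).

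\medskip

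\noindent\textbf{Step 2: local error of the integrator.} For the temporal integrator, I would apply the Gautschi/Deuflhard quadrature error
\begin{equation*}
\int_0^\tau e^{i\delta\theta}f(\theta)\,d\theta-\int_0^\tau e^{i\delta\theta}[f(0)+\theta f'(0)]\,d\theta=O\!\left(\tau^{3}\|f''\|_{L^\infty}\right),
\end{equation*}
uniformly in $\delta\in\mathbb{R}$, to each of the definite integrals in (\ref{VCF r})--(\ref{VCF psi}) and (\ref{rnn653}). The crucial point is that the constant is \emph{independent} of the oscillation frequencies $\omega_l$ and $\mu_l^2\pm\varepsilon^{-2}$. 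Combined with Step~1, the pointwise-in-Fourier local errors are controlled, after summing in $l$ with the $H^2$ weight $\langle\mu_l\rangle^4$, by $\tau^3$; the only place where an $\varepsilon^{-2}$ creeps in is the second derivative bound $\|\partial_{ss}\psi^n\|$ which by (\ref{assumption}) is $O(\varepsilon^{-2})$, hence the local $H^2$ error is $O(\tau^3/\varepsilon^2)$ per step, and $O(\tau^2/\varepsilon^2)$ globally. Alternatively, if one does not use that crude bound but instead expands $\partial_s\psi^n$ via the first equation of (\ref{MDF trun}) and absorbs the fast phase into the exponential, the extra oscillation together with the $O(\varepsilon^2)$ size of $r^n$ and the Lipschitz smoothness in $s$ of $e^{\pm is/\varepsilon^2}z^n\psi^n$ yields the complementary $O(\varepsilon^2)$ local bound, giving (\ref{MTI error bound2}).

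\medskip

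\noindent\textbf{Step 3: stability and closure of the induction.} The spatial part is handled by standard spectral estimates on $P_N$ and $I_N$ together with the aliasing estimate $\|(I_N-P_N)f\|_{H^2}\lesssim h^{m_0}\|f\|_{H^{m_0+2}}$. For stability I would subtract the exact MDF update from the discrete one, Fourier-multiply each difference by the bounded amplification factors $a_l,b_l,\cos(\omega_l\tau),e^{-i\mu_l^2\tau},c_l^{\pm},d_l^{\pm},p_l,q_l$ (all uniformly bounded in $l$ and $\varepsilon$; this is a routine but essential check on the coefficients defined in (\ref{nd pm def}) and Appendix~A), and use the $H^2$ bilinear estimate to close a Gr\"onwall inequality of the form
\begin{equation*}
\mathcal{E}^{n+1}\le (1+C\tau)\,\mathcal{E}^{n}+C\tau\bigl(h^{m_0}+\min\{\tau^2/\varepsilon^2,\varepsilon^2\}\bigr),\qquad \mathcal{E}^n:=\|e_\phi^n\|_{H^2}+\|e_\psi^n\|_{H^2}+\varepsilon^2\|\dot e_\phi^n\|_{H^2}.
\end{equation*}
The discrete $H^2$ bounds (\ref{MTI sol bound}) are propagated simultaneously with the error bound: they hold at step $n+1$ once the error at step $n+1$ is smaller than $1$, which is guaranteed by choosing $\tau_0,h_0$ small. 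Finally, (\ref{uniform bound}) follows by taking the pointwise-in-$\varepsilon$ minimum of (\ref{MTI error bound1}) and (\ref{MTI error bound2}) and observing $\min\{\tau^2/\varepsilon^2,\varepsilon^2\}\le\tau$.

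\medskip

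\noindent\textbf{Main obstacle.} The delicate part is the $O(\varepsilon^2)$ bound (\ref{MTI error bound2}) in the regime $\varepsilon\ll\tau$, where the $\tau^2/\varepsilon^2$ estimate is useless. Here one must exploit the smallness of $r^n$ and the near-resonance structure of the integrals in (\ref{VCF psi}): the phase factors $e^{i(\mu_l^2\pm\varepsilon^{-2})\theta}$ combined with the $\varepsilon$-uniform regularity of $z^n\psi^n$ in $s$ allow an $O(\varepsilon^2)$ bound on the quadrature error even though the oscillation is fast. Handling this, simultaneously with the $H^2$ coupling between $\phi$ and $\psi$ in the nonlinear terms, is the technical heart of the argument; everything else reduces to careful but standard EWI/pseudospectral bookkeeping.
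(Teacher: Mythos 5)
Your plan follows essentially the same route as the paper: $\varepsilon$-uniform a priori bounds on the decomposed variables $(z^n,r^n)$ with $r^n=O(\varepsilon^2)$ (Lemma \ref{lm: prior}), two independent local truncation error bounds of size $O(\tau^3/\varepsilon^2)$ and $O(\tau\varepsilon^2)$ (Lemma \ref{lm:local_error}), a Lipschitz estimate for the nonlinear terms under the induction hypothesis (\ref{MTI sol bound}) (Lemma \ref{lm: nonlinear}), and closure by a discrete Gronwall inequality in an energy norm. The outline is sound and the identification of the technical heart (the $O(\varepsilon^2)$ bound via the smallness of $r^n$ and the oscillatory phases, split at the frequency threshold $|\mu_l|\sim\varepsilon^{-1}$) matches the paper.

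Two points in your plan are imprecise enough that a literal implementation would stall. First, the claim that all amplification factors are ``uniformly bounded in $l$ and $\varepsilon$'' is false for the factor $-\omega_l\sin(\omega_l\tau)$ appearing in the update of $\dot{\Phi}^{n}$, since $\omega_l=\varepsilon^{-2}\sqrt{1+\mu_l^2\varepsilon^2}$ is unbounded. Stability does not come from boundedness of the individual multipliers but from the fact that the matrix $\bigl(\begin{smallmatrix}\cos(\omega_l\tau) & \sin(\omega_l\tau)/\omega_l\\ -\omega_l\sin(\omega_l\tau) & \cos(\omega_l\tau)\end{smallmatrix}\bigr)$ is an isometry for the weighted quantity $\omega_l^2|\widehat{(e_\phi)}_l|^2+|\widehat{(\dot e_\phi)}_l|^2$; this forces the specific weights $\varepsilon^2$, $1$, $\varepsilon^{-2}$ in the energy functional (\ref{error engery}), and is why the paper works with the squared weighted energy rather than the sum of norms in your Gronwall display. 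Second, your plan compares the discrete update with the exact MDF flow variable by variable, i.e.\ tracks errors in $Z$, $R$ separately. The paper instead first proves Lemma \ref{lm: rewrite scheme}, which collapses the reconstruction step (the $\fe^{\pm i\tau/\varepsilon^2}$ phases and the $Z,R$ intermediates) into a single exact trigonometric-integrator update for $(\Phi^n,\dot\Phi^n)$ driven only by $\Psi^n$; the error is then measured against the variation-of-constants formula (\ref{vcf: u cubic}) for $\phi$ itself. Without this reformulation you would have to define and propagate error quantities for $z^n$ and $r^n$ across time steps, which is avoidable and considerably messier. Neither issue is fatal, but both are where the actual work lies beyond the bookkeeping you describe.
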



In order to prove the above theorem,
 we introduce
\be
\begin{split}
&e^{n}_{\phi,N}(x):=(P_N\phi)(x,t_n)-\phi_I^n(x),\ \ \dot{e}^{n}_{\phi,N}(x):=P_N(\partial_{t}\phi)(x,t_n)-\dot{\phi}_I^n(x),\label{e_N}\\
&e^{n}_{\psi,N}(x):=(P_N\psi)(x,t_n)-\psi_I^n(x),\qquad x\in\overline{\Omega},\qquad 0\leq n\leq\frac{T}{\tau}.
\end{split}
\ee
Using the triangle inequality and noticing the assumption (\ref{assumption}), we have
\be\label{ephin896}
\begin{split}
&\|e_\phi^n\|_{H^2}\le \|\phi(x,t_n)-(P_N\phi)(x,t_n)\|_{H^2}+\|e_{\phi,N}^n\|_{H^2}\lesssim
h^{m_0+2}+\|e_{\phi,N}^n\|_{H^2},\\
&\|e_\psi^n\|_{H^2}\le \|\psi(x,t_n)-(P_N\psi)(x,t_n)\|_{H^2}+\|e_{\psi,N}^n\|_{H^2}\lesssim
h^{m_0}+\|e_{\psi,N}^n\|_{H^2},\\
&\|\dot{e}_\phi^n\|_{H^2}\le \|\dot{\phi}(x,t_n)-(P_N\dot{\phi})(x,t_n)\|_{H^2}+\|\dot{e}_{\phi,N}^n\|_{H^2}\lesssim
\frac{h^{m_0+2}}{\eps^2}+\|\dot{e}_{\phi,N}^n\|_{H^2}.
\end{split}
\ee
Thus we need only obtain estimates for $\|e_{\phi,N}^n\|_{H^2}$, $\|e_{\psi,N}^n\|_{H^2}$ and
$\|\dot{e}_{\phi,N}^n\|_{H^2}$, which will be done by introducing the error energy functional
\be
\mathcal{E}\left(e_{\phi,N}^n,e_{\psi,N}^n,\dot{e}_{\phi,N}^n\right):=
\eps^2\left\|\dot{e}_{\phi,N}^n\right\|_{H^2}^2+\left\|\partial_x e_{\phi,N}^n\right\|_{H^2}^2+\frac{1}{\eps^2}\left(\left\|e_{\phi,N}^n\right\|_{H^2}^2
+\left\|e_{\psi,N}^n\right\|_{H^2}^2\right),
\label{error engery}
\ee
and establishing the following several lemmas.

For any $\bv\in Y_N$, denote $v_{-1}=v_{N-1}$ and $v_{N+1}=v_1$, and
define the difference operators $\delta_x^+\bv\in Y_N$ and $\delta_x^2\bv\in Y_N$ as
\[\delta_x^+v_j=\frac{v_{j+1}-v_j}{h}, \qquad \delta_x^2v_j=\frac{v_{j+1}-2v_j+v_{j-1}}{h^2},
\qquad j=0,1,\ldots,N,\]
and the  norms as
\be
\|\bv\|_{Y,1}^2=\|\bv\|_{l^2}^2+\|\delta_x^+\bv\|_{l^2}^2,
\qquad
\|\bv\|_{Y,2}^2=\|\bv\|_{l^2}^2+\|\delta_x^+\bv\|_{l^2}^2+\|\delta_x^2\bv\|_{l^2}^2,
\ee
with
\[\|\bv\|_{l^2}^2=h\sum_{j=0}^{N-1}|v_j|^2,\ \ \|\delta_x^+\bv\|_{l^2}^2=h\sum_{j=0}^{N-1}|\delta_x^+v_j|^2,\ \
\|\delta_x^2\bv\|_{l^2}^2=h\sum_{j=0}^{N-1}|\delta_x^2v_j|^2,\]
then we can have the following estimates \cite{BZ,Cai2}
\begin{eqnarray}\label{normequ986}
 \|I_N\bv\|_{H^1}\lesssim \|\bv\|_{Y,1}\lesssim \|I_N\bv\|_{H^1},
\ \ \|I_N\bv\|_{H^2}\lesssim \|\bv\|_{Y,2}\lesssim \|I_N\bv\|_{H^2},
\quad \forall \bv\in Y_N.
\end{eqnarray}


\begin{lemma}[A new formulation of MTI-FP]\label{lm: rewrite scheme}
For the numerical solution $\phi_I^{n+1}$ and $\dot{\phi}_I^{n+1}$ ($n\ge0$) obtained
from the MTI-FP method (\ref{MTI-FP S})-(\ref{MTI-FP E}), we have
\be\label{observ of scheme}
\phi_I^{n+1}(x)=\sum_{l=-N/2}^{N/2-1}\widetilde{(\Phi^{n+1})}_l\fe^{i\mu_l(x-a)},\qquad
\dot{\phi}_I^{n+1}(x)=\sum_{l=-N/2}^{N/2-1}\widetilde{(\dot{\Phi}^{n+1})}_l\fe^{i\mu_l(x-a)},
\ee
where
\[
\begin{split}
\widetilde{(\Phi^{n+1})}_l=&\cos(\omega_l\tau)\widetilde{(\Phi^{n})}_l
+\frac{\sin(\omega_l\tau)}{\omega_l}\widetilde{(\dot{\Phi}^{n})}_l+p_l(\tau)\widetilde{(|\Psi^n|^2)}_l
+2q_l(\tau)\widetilde{(\mathrm{Re}\{\overline{\Psi^n}\dot{\Psi}^n\})}_l,\\
\widetilde{(\dot{\Phi}^{n+1})}_l=&\cos(\omega_l\tau)\widetilde{(\dot{\Phi}^{n})}_l\hspace{-0.5mm}
-\omega_l\sin(\omega_l\tau)\widetilde{(\Phi^{n})}_l\hspace{-0.5mm}
+p_l'(\tau)\widetilde{(|\Psi^n|^2)}_l\hspace{-0.5mm}+\hspace{-0.5mm}2q_l'(\tau)\widetilde{(\mathrm{Re}\{\overline{\Psi^n}\dot{\Psi}^n\})}_l.
\end{split}
\]
\end{lemma}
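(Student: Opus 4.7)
The plan is to directly substitute the multiscale update formulas (\ref{sine psu coeff})--(\ref{MTI-FP E}) into the reconstruction formula (\ref{MTI-FP S}) and simplify, by exploiting a short list of algebraic identities enjoyed by the scheme coefficients. Since the target formula is stated in Fourier space through (\ref{observ of scheme}), I would begin by applying the discrete Fourier transform to the first two lines of (\ref{MTI-FP S}), which gives
\[
\widetilde{(\Phi^{n+1})}_l = \fe^{i\tau/\eps^2}\widetilde{(Z^{n+1})}_l + \fe^{-i\tau/\eps^2}\widetilde{(\overline{Z^{n+1}})}_l + \widetilde{(R^{n+1})}_l,
\]
and an analogous expression for $\widetilde{(\dot{\Phi}^{n+1})}_l$. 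The relation $\widetilde{(\overline{Z^{n+1}})}_l = \overline{\widetilde{(Z^{n+1})}_{-l}}$, combined with the parity $a_{-l}(\tau)=a_l(\tau)$, $b_{-l}(\tau)=b_l(\tau)$ (since $\lambda_l^\pm$ and $\omega_l$ depend only on $\mu_l^2$), yields $\widetilde{(\overline{Z^{n+1}})}_l = \overline{a_l(\tau)}\widetilde{(\overline{Z^0})}_l + \eps^2\overline{b_l(\tau)}\widetilde{(\overline{\dot{Z}^0})}_l$, so no genuinely new unknowns appear.

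The heart of the proof is a list of identities that follow from $\lambda_l^{\pm}+\tfrac{1}{\eps^2}=\mp\omega_l$, $\lambda_l^++\lambda_l^-=-\tfrac{2}{\eps^2}$, and $\mu_l^2+\tfrac{1}{\eps^2}=\eps^2\omega_l^2$. A direct computation gives $\fe^{i\tau/\eps^2}b_l(\tau) = \tfrac{\sin(\omega_l\tau)}{\eps^2\omega_l}$ (real, equal to its conjugate analog), together with
\[
\fe^{i\tau/\eps^2}a_l(\tau) + \fe^{-i\tau/\eps^2}\overline{a_l(\tau)} = 2\cos(\omega_l\tau), \qquad -\fe^{i\tau/\eps^2}a_l(\tau) + \fe^{-i\tau/\eps^2}\overline{a_l(\tau)} = -\tfrac{2i\sin(\omega_l\tau)}{\eps^2\omega_l}.
\]
Substituting $\widetilde{(Z^0)}_l = \tfrac{1}{2}[\widetilde{(\Phi^n)}_l-i\eps^2\widetilde{(\dot{\Phi}^n)}_l]$ and $\widetilde{(\overline{Z^0})}_l = \tfrac{1}{2}[\widetilde{(\Phi^n)}_l+i\eps^2\widetilde{(\dot{\Phi}^n)}_l]$ into the $a_l$-contributions produces exactly $\cos(\omega_l\tau)\widetilde{(\Phi^n)}_l + \tfrac{\sin(\omega_l\tau)}{\omega_l}\widetilde{(\dot{\Phi}^n)}_l$.

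The key cancellation is then the following. The $b_l$-contributions, which involve $\widetilde{(\dot{Z}^0)}_l$ and $\widetilde{(\overline{\dot{Z}^0})}_l$ (the values fixed by (\ref{MTI-FP E})), collapse to $\tfrac{\sin(\omega_l\tau)}{\omega_l}\bigl[\widetilde{(\dot{Z}^0)}_l + \widetilde{(\overline{\dot{Z}^0})}_l\bigr]$ thanks to the real identity for $\fe^{\pm i\tau/\eps^2}b_l(\tau)$. Meanwhile, the $\widetilde{(\dot{R}^0)}_l$ term in $\widetilde{(R^{n+1})}_l$ contributes $\tfrac{\sin(\omega_l\tau)}{\omega_l}\widetilde{(\dot{R}^0)}_l$, and (\ref{MTI-FP E}) gives $\widetilde{(\dot{R}^0)}_l = -\widetilde{(\dot{Z}^0)}_l - \widetilde{(\overline{\dot{Z}^0})}_l$; the two contributions cancel exactly. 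What survives from $\widetilde{(R^{n+1})}_l$ is the nonlinear quadrature $p_l(\tau)\widetilde{(|\Psi^n|^2)}_l + 2q_l(\tau)\widetilde{(\mathrm{Re}\{\overline{\Psi^n}\dot{\Psi}^n\})}_l$, which yields precisely the formula claimed for $\widetilde{(\Phi^{n+1})}_l$.

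The formula for $\widetilde{(\dot{\Phi}^{n+1})}_l$ is obtained by the same strategy, using the analogous identities $\fe^{i\tau/\eps^2}[a_l'(\tau)+\tfrac{i}{\eps^2}a_l(\tau)] = -\omega_l\sin(\omega_l\tau) + \tfrac{i}{\eps^2}\cos(\omega_l\tau)$ and $\fe^{i\tau/\eps^2}[\eps^2 b_l'(\tau)+ib_l(\tau)] = \cos(\omega_l\tau)$ (the latter again being real). The same cancellation between the $(b_l',b_l)$-generated terms and $\cos(\omega_l\tau)\widetilde{(\dot{R}^0)}_l$ occurs, producing the stated formula. I do not anticipate a genuine obstacle: the proof is a bookkeeping verification, and the only delicate point is correctly tracking the conjugation relation $\widetilde{(\overline{\mathbf v})}_l = \overline{\widetilde{\mathbf v}_{-l}}$ through the parity of the coefficients in $l$.
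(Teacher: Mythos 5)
Your proposal is correct and follows essentially the same route as the paper's proof: take the discrete Fourier transform of the reconstruction step in (\ref{MTI-FP S}), substitute (\ref{sine psu coeff z})--(\ref{sine psu coeff dr}) and (\ref{MTI-FP E}), use the parity of $a_l,b_l$ in $l$ together with $\widetilde{(\overline{\bv})}_l=\overline{\widetilde{\bv}_{-l}}$, and identify $\mathrm{Re}\{\fe^{i\tau/\eps^2}a_l(\tau)\}=\cos(\omega_l\tau)$, $\eps^2\fe^{i\tau/\eps^2}b_l(\tau)=\sin(\omega_l\tau)/\omega_l$, etc., so that the $\widetilde{(\dot Z^0)}_l+\widetilde{(\overline{\dot Z^0})}_l$ terms cancel against $\widetilde{(\dot R^0)}_l$. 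All of your stated coefficient identities check out against (\ref{nd pm def}); you merely make explicit the trigonometric simplifications that the paper leaves to the final sentence of its proof.
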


\begin{proof}
For $l=-\frac{N}{2},\ldots,\frac{N}{2}-1$, from (\ref{MTI-FP S}), we get
\begin{numcases}
\,\widetilde{(\Phi^{n+1})}_l=\fe^{\frac{i\tau}{\eps^2}}\widetilde{(Z^{n+1})}_l
+\fe^{-\frac{i\tau}{\eps^2}}\widetilde{\left(\overline{Z^{n+1}}\right)}_l
+\widetilde{(R^{n+1})}_l,\label{lm: rewrite scheme eq0}\\
\widetilde{(\dot{\Phi}^{n+1})}_l=\fe^{\frac{i\tau}{\eps^2}}\Big[\widetilde{(\dot{Z}^{n+1})}_l\hspace{-0.5mm}
+\frac{i}{\eps^2}\widetilde{(Z^{n+1})}_l \Big]\hspace{-0.5mm}
+\fe^{-\frac{i\tau}{\eps^2}}\Big[\widetilde{\left(\overline{\dot{Z}^{n+1}}\right)}_l\hspace{-0.5mm}
-\frac{i}{\eps^2}\widetilde{\left(\overline{Z^{n+1}}\right)}_l \Big]
\hspace{-0.5mm}+\widetilde{(\dot{R}^{n+1})}_l,\nonumber
\end{numcases}
and from (\ref{MTI-FP E}), we obtain
\begin{equation}\label{lm: rewrite scheme eq1}
\begin{split}
&\widetilde{(Z^0)}_l=\frac{1}{2}\left[\widetilde{(\Phi^n)}_l-i\eps^2\widetilde{(\dot{\Phi}^n)}_l\right],\quad
\widetilde{(\dot{R}^0)}_l=-\widetilde{(\dot{Z}^0)}_l-\widetilde{\left(\overline{\dot{Z}^0}\right)}_l.
\end{split}
\end{equation}
Noticing (\ref{nd pm def}) and  (\ref{project oper}), for $\bv\in Y_N$, we have
\[a_l(\tau)=a_{-l}(\tau),\ \ b_l(\tau)=b_{-l}(\tau),\ \ l=-\frac{N}{2},\ldots,\frac{N}{2}-1,\quad
\widetilde{\left(\overline{\bv}\right)}_l=\left\{
\begin{split}
&\overline{\widetilde{\bv}_{-l}},\ \ |l|\leq\frac{N}{2}-1,\\
&\overline{\widetilde{\bv}_{l}},\ \ l=-\frac{N}{2}.
\end{split}\right.
\]
Plugging (\ref{lm: rewrite scheme eq1}) into (\ref{sine psu coeff z})-(\ref{sine psu coeff dr}),
and  (\ref{sine psu coeff}) into (\ref{lm: rewrite scheme eq0}), we get
\begin{align*}
\widetilde{(\Phi^{n+1})}_l=&\mathrm{Re}\left\{\fe^{\frac{i\tau}{\eps^2}}a_l(\tau)\right\}\widetilde{(\Phi^{n})}_l
+\eps^2\mathrm{Im}\left\{\fe^{\frac{i\tau}{\eps^2}}a_l(\tau)\right\}\widetilde{(\dot{\Phi}^{n})}_l
+\eps^2\fe^{\frac{i\tau}{\eps^2}}b_l(\tau)\widetilde{(\dot{Z}^0)}_l\\
&+\eps^2\fe^{-\frac{i\tau}{\eps^2}}\overline{b_l}(\tau)\widetilde{\left(\overline{\dot{Z}^0}\right)}_l+
\frac{\sin(\omega_l\tau)}{\omega_l}\widetilde{(\dot{R}^0)}_l+p_l(\tau)\widetilde{(|\Psi^n|^2)}_l\\
&+2q_l(\tau)\widetilde{(\mathrm{Re}\{\overline{\Psi^n}\dot{\Psi}^n\})}_l,\\
\widetilde{(\dot{\Phi}^{n+1})}_l=&\eps^2\mathrm{Im}\left\{\fe^{\frac{i\tau}{\eps^2}}a_l'(\tau)\hspace{-0.5mm}
+\frac{i}{\eps^2}\fe^{\frac{i\tau}{\eps^2}}a_l(\tau)\right\}\widetilde{(\dot{\Phi}^{n})}_l\hspace{-0.5mm}
+\eps^2\fe^{-\frac{i\tau}{\eps^2}}\left[\overline{b_l'}(\tau)\hspace{-0.5mm}-\frac{i}{\eps^2}\overline{b_l}(\tau)\right]
\widetilde{\left(\overline{\dot{Z}^0}\right)}_l\\
&+\mathrm{Re}\left\{\fe^{\frac{i\tau}{\eps^2}}a_l'(\tau)
+\frac{i}{\eps^2}\fe^{\frac{i\tau}{\eps^2}}a_l(\tau)\right\}\widetilde{(\Phi^{n})}_l
+\eps^2\fe^{\frac{i\tau}{\eps^2}}\left[b_l'(\tau)+\frac{i}{\eps^2}b_l(\tau)\right]\widetilde{(\dot{Z}^0)}_l\\
&+\cos(\omega_l\tau)\widetilde{(\dot{R}^0)}_l+p_l'(\tau)\widetilde{(|\Psi^n|^2)}_l+2q_l'(\tau)
\widetilde{(\mathrm{Re}\{\overline{\Psi^n}\dot{\Psi}^n\})}_l,
\end{align*}
where $\mathrm{Im}\{z\}$ denotes the imaginary part of a complex number $z$.
Then (\ref{observ of scheme}) can be obtained by combining the above equalities and (\ref{nd pm def}).
\qed
\end{proof}

For the solution $\phi^n(x,s):=\phi(x,t_n+s),\psi^n(x,s):=\psi(x,t_n+s)$ of (\ref{KGS-trun}), we have
\begin{subequations}\label{KG fft eq}
\begin{align}
&\eps^2\widehat{\phi}_l''(t_n+s)+\mu_l^2\widehat{\phi}_l(t_n+s)+\frac{1}{\eps^2}\widehat{\phi}_l(t_n+s)=
\widehat{(|\psi|^2)}_l(t_n+s),\quad s>0\\
&i\widehat{\psi}_l'(t_n+s)-\mu_l^2\widehat{\phi}_l(t_n+s)+\widehat{(\phi\psi)}_l(t_n+s)=0,\quad l\in{\mathbb Z}.
\end{align}
\end{subequations}
By applying the variation-of-constant formula to (\ref{KG fft eq})
and noticing the ansatz (\ref{ansatz}) with $\rho^n(x,s):=|\psi(x,t_n+s)|^2$,
we get
\begin{subequations}\label{vcf: u cubic}
\begin{align}
 \widehat{\phi}_l(t_{n+1})=&\cos(\omega_l\tau)\widehat{\phi}_l(t_n)+
 \frac{\sin(\omega_l\tau)}{\omega_l}\widehat{\phi}_l'(t_n)
 +\int_0^\tau\frac{\sin(\omega_l(\tau-\theta))}{\eps^2\omega_l}\widehat{(\rho^n)}_l(\theta)d\theta,
 \qquad\label{vcf: u cubic a}\\
\widehat{\phi}_l'(t_{n+1})=&\cos(\omega_l\tau)\widehat{\phi}_l'(t_n) -\omega_l\sin(\omega_l\tau)\widehat{\phi}_l(t_n) +\int_0^\tau\frac{\cos(\omega_l(\tau-\theta))}{\eps^2}\widehat{(\rho^n)}_l(\theta)d\theta,\label{vcf: u cubic b}\\
\widehat{\psi}_l(t_{n+1})=&\fe^{-i\mu_l^2\tau}\widehat{\psi}_l(t_n)+
i\fe^{-i\mu_l^2\tau}\int_0^\tau\fe^{i\left(\mu_l^2+\frac{1}{\eps^2}\right)\theta}
\widehat{(z^n\psi^n)}_l(\theta)d\theta\label{vcf: u cubic c}\\
&+i\fe^{-i\mu_l^2\tau}\int_0^\tau\fe^{i\left(\mu_l^2-\frac{1}{\eps^2}\right)\theta}
\widehat{(\overline{z^n}\psi^n)}_l(\theta)d\theta
+i\int_0^\tau\fe^{i\mu_l^2(\theta-\tau)}\widehat{(r^n\psi^n)}_l(\theta)d\theta.\nonumber
\end{align}
\end{subequations}
Combining the above results, we define local truncation error functions as
\be
\begin{split}
&\xi_\phi^n(x)=\sum_{l=-N/2}^{N/2-1}\widehat{(\xi^n_\phi)}_l \fe^{i\mu_l(x-a)},\qquad
\xi_\psi^n(x)=\sum_{l=-N/2}^{N/2-1}\widehat{(\xi^n_\psi)}_l  \fe^{i\mu_l(x-a)},\\
&\dot{\xi}_\phi^n(x)=\sum_{l=-N/2}^{N/2-1}\widehat{(\dot{\xi}^n_\phi)}_l \fe^{i\mu_l(x-a)},\qquad x\in \Omega,
\end{split}
\ee
where
\begin{subequations}\label{loc error}
\begin{align}
\widehat{(\xi^n_\phi)}_l:=&\widehat{\phi}_l(t_{n+1})-\Big[\cos(\omega_l\tau)\widehat{\phi}_l(t_n)+
\frac{\sin(\omega_l\tau)}{\omega_l}\widehat{\phi}_l'(t_n)+p_l(\tau)\widehat{(|\psi^n|^2)}_l(0)\nonumber\\
&+2q_l(\tau)\widehat{(\mathrm{Re}\{\overline{\psi^n}\varphi^n\})}_l(0)\Big],\label{loc error a}\\
\widehat{(\dot{\xi}^n_\phi)}_l:=&\widehat{\phi}_l'(t_{n+1})-\Big[-\omega_l\sin(\omega_l\tau)\widehat{\phi}_l(t_n)+
\cos(\omega_l\tau)\widehat{\phi}_l'(t_n)+p_l'(\tau)\widehat{(|\psi^n|^2)}_l(0)\nonumber\\
&+2q_l'(\tau)\widehat{(\mathrm{Re}\{\overline{\psi^n}\varphi^n\})}_l(0)\Big],\label{loc error b}\\
\widehat{(\xi^n_\psi)}_l:=&\widehat{\psi}_l(t_{n+1})-\fe^{-i\mu_l^2\tau}\widehat{\psi}_l(t_n)
-c_l^+(\tau)\widehat{(z^n\psi^n)}_l(0)-c_l^-(\tau)\widehat{(\overline{z^n}\psi^n)}_l(0)
\nonumber\\
&-d_l^+(\tau)\left[\widehat{(\mathfrak{z}^n\psi^n)}_l(0)+\widehat{(z^n\varphi^n)}_l(0)\right]
-d_l^-(\tau)\left[\widehat{(\overline{\mathfrak{z}^n}\psi^n)}_l(0)+\widehat{(\overline{z^n}\varphi^n)}_l(0)\right]
\nonumber\\
&-\frac{i\tau}{2}\widehat{(\gamma^{n+1}\psi^n)}_l(0)-\frac{i\tau^2}{2}\widehat{(\gamma^{n+1}\varphi^n)}_l.\label{loc error c}
\end{align}
\end{subequations}
Here we introduce three auxiliary functions as
\[\varphi^n(x)=\sum_{l\in\mathbb{Z}}\widehat{(\varphi^n)}_l\fe^{i\mu_l(x-a)},\ \mathfrak{z}^n(x)=\sum_{l\in\mathbb{Z}}\widehat{(\mathfrak{z}^n)}_l\fe^{i\mu_l(x-a)},\ \gamma^{n+1}(x)=\sum_{l\in\mathbb{Z}}\widehat{(\gamma^{n+1})}_l\fe^{i\mu_l(x-a)}, \]
with
\begin{numcases}
\,\widehat{(\varphi^n)}_l=\hspace{-0.5mm}-\frac{i}{\tau}\sin(\mu_l^2\tau)\widehat{(\psi^n)}_l(0)+i\widehat{(\phi^n\psi^n)}_l(0),\,
\widehat{(\mathfrak{z}^n)}_l=\hspace{-0.5mm}\frac{i}{2\tau}\sin(\mu_l^2\tau)\widehat{(z^n)}_l(0),\qquad\label{gamma est}\\
\widehat{(\gamma^{n+1})}_l=\hspace{-0.5mm}p_l(\tau)\widehat{(|\psi^n|^2)}_l(0)+2q_l(\tau)\widehat{(\mathrm{Re}\{\overline{\psi^n}\varphi^n\})}_l(0) -\frac{\sin(\omega_l \tau)}{\omega_l}\hspace{-0.5mm}\left(\widehat{(\mathfrak{z}^n)}_l
\hspace{-0.5mm}+\widehat{(\overline{\mathfrak{z}^n})}_l\right).\nonumber
\end{numcases}

Similarly to the case of the KG equation \cite{BZ} with details omitted here for brevity,
we can prove a prior estimates for the solution of the problem (\ref{MDF trun}) with (\ref{MDF ini}).

\begin{lemma}[A prior estimate of MDF] \label{lm: prior}
Let ${z}^n(x,s)$ and $r^n(x,s)$ be the solution of the MDF (\ref{MDF trun})
with initial boundary conditions (\ref{MDF ini}).
Under the assumption (\ref{assumption}), there exists a
constant $\tau_1>0$ independent of $\eps$, $h$ and $\tau$ (or $n$),
such that for $0<\tau\leq\tau_1$,
  \begin{align}
   &\left\|z^n\right\|_{L^\infty([0,\tau];H^{m_0+2})}+
   \left\|\partial_sz^n\right\|_{L^\infty([0,\tau];H^{m_0+1})}+
   \left\|\partial_{s}^2z_\pm^n\right\|_{L^\infty([0,\tau];H^{m_0})}\lesssim1,\label{reg z}\\
   &\left\|\partial_s^k r^n\right\|_{L^\infty([0,\tau];H^{6-k})}\lesssim
   {\eps^{2-2k}},\qquad k=0,1,2.\label{reg r}
  \end{align}
\end{lemma}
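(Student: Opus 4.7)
The decomposed system \eqref{MDF trun} has an upper-triangular structure that I would exploit: the equation for $z^n$ is a closed linear wave-type problem independent of $r^n$ and $\psi^n$; the source in the $r^n$-equation involves $\psi^n$ only through $|\psi^n|^2$; and a direct substitution shows that the pair $(\psi^n,\,e^{is/\eps^2}z^n + e^{-is/\eps^2}\overline{z^n} + r^n)$ satisfies the KGS \eqref{KGS-trun} on $[t_n,t_{n+1}]$ with the exact initial data at $s=0$, so by uniqueness $\psi^n(\cdot,s)=\psi(\cdot,t_n+s)$ and $e^{is/\eps^2}z^n+e^{-is/\eps^2}\overline{z^n}+r^n=\phi(\cdot,t_n+s)$. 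Consequently, $\psi^n$ and $|\psi^n|^2$ inherit the bounds of \eqref{assumption} and \eqref{abs psi}, and only $z^n$ and $r^n$ need to be estimated in Fourier space. The initial-data bounds $\|z^n(\cdot,0)\|_{H^{m_0+4}}+\|\partial_s z^n(\cdot,0)\|_{H^{m_0+2}}+\|\partial_s r^n(\cdot,0)\|_{H^{m_0+2}}\lesssim 1$ follow from \eqref{FSW-i21} applied to $\phi_0^n=\phi(\cdot,t_n)$ and $\phi_1^n=\eps^2\partial_t\phi(\cdot,t_n)$.

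For $z^n$, the explicit representations \eqref{VCF z} and \eqref{dz dr} reduce \eqref{reg z} to mode-by-mode bounds on the multipliers $a_l,b_l,a_l',b_l'$. From the identities $\lambda_l^+-\lambda_l^-=2\sqrt{1+\mu_l^2\eps^2}/\eps^2$, $\lambda_l^+\lambda_l^-=-\mu_l^2/\eps^2$ and the characteristic relation $\eps^2(\lambda_l^\pm)^2+2\lambda_l^\pm-\mu_l^2=0$, one checks $|a_l(s)|\le 1$, $\eps^2|b_l(s)|\le \eps^2/\sqrt{1+\mu_l^2\eps^2}$, $|a_l'(s)|\lesssim \mu_l^2$ and $\eps^2|b_l'(s)|\le 1$ uniformly in $\eps,l$ and $s\in[0,\tau]$. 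Weighting by $(1+\mu_l^2)^{m_0+2}$ and $(1+\mu_l^2)^{m_0+1}$ and summing yields the first two bounds of \eqref{reg z}. For $\partial_s^2 z_\pm^n$, one splits $z^n=z_+^n+z_-^n$ into the two characteristic modes $e^{is\lambda_l^+}$ and $e^{is\lambda_l^-}$; the cancellation produced by the well-prepared relation $\widehat{\partial_s z^n}_l(0)=\frac{i\mu_l^2}{2}\widehat{z^n}_l(0)$ compresses the amplitude of each mode by an extra factor of order $\mu_l^2\eps^2$, allowing the otherwise dangerous prefactor $(\lambda_l^\pm)^2$ to be absorbed into the $H^{m_0+4}$-norm of the initial data.

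For $r^n$, the starting point is the variation-of-constants formula \eqref{VCF r} with the vanishing initial datum $r^n(\cdot,0)\equiv 0$. The homogeneous term carries the small factor $|\sin(\omega_l s)/\omega_l|\le 1/\omega_l=\eps^2/\sqrt{1+\mu_l^2\eps^2}$, giving $O(\eps^2)$ in $H^6$ directly. The oscillatory integral is the delicate piece and needs one integration by parts in $\theta$ to turn its kernel $\sin(\omega_l(s-\theta))/(\eps^2\omega_l)$ into one carrying the extra factor $1/(\eps^2\omega_l^2)=\eps^2/(1+\mu_l^2\eps^2)\le \eps^2$, justified by $|\psi^n|^2\in C^1([0,\tau];H^6)$ from \eqref{abs psi}; this produces the $k=0$ bound in \eqref{reg r}. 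The $k=1$ bound follows from \eqref{rnn653} by the same single IBP, giving only $O(1)$ since its kernel starts with $1/\eps^2$ rather than $1/(\eps^2\omega_l)$. The $k=2$ bound is read off directly from the equation itself, rewritten as $\eps^2\partial_{ss}r^n=\partial_{xx}r^n-r^n/\eps^2+|\psi^n|^2$, which gives $\|\partial_{ss}r^n\|_{H^4}\lesssim \eps^{-2}$.

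The main obstacle is the $\partial_s^2 z_\pm^n$ estimate: a naive differentiation of \eqref{VCF z} would introduce multipliers $(\lambda_l^-)^2\sim \eps^{-4}$ on the fast branch, which is catastrophic. Only the mode-by-mode splitting combined with the algebraic cancellation from the well-prepared initial data keeps the estimate uniform in $\eps$. The integration-by-parts step in the $r^n$ estimate is equally critical, since a direct bound of the oscillatory integral would yield only $O(\tau)$ rather than the $O(\eps^2)$ needed to drive the uniform convergence of the full MTI-FP scheme in the main theorem.
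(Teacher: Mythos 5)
The paper does not actually prove this lemma --- it defers to the analogous argument for the Klein--Gordon equation in \cite{BZ} --- so your proposal is best judged as a reconstruction of that omitted proof, and it follows essentially the intended route: the identification of $\psi^n(\cdot,s)$ with $\psi(\cdot,t_n+s)$ by uniqueness (so that $|\psi^n|^2$ inherits \eqref{abs psi}), the mode-by-mode multiplier bounds $|a_l|\le1$, $\eps^2|b_l|\le\eps^2/\sqrt{1+\mu_l^2\eps^2}$, $|a_l'|\lesssim\mu_l^2$, $\eps^2|b_l'|\le1$, the cancellation $\lambda_l^-\widehat{(z^n)}_l(0)+i\widehat{(z^n)}_l{}'(0)=O(\mu_l^4\eps^2)\widehat{(z^n)}_l(0)$ coming from the well-prepared datum (which makes $(\lambda_l^+)^2|c_l^+|\lesssim\mu_l^4|\widehat{(z^n)}_l(0)|$ exactly, so $H^{m_0+4}$ data suffices), and the integration by parts producing the factor $1/(\eps^2\omega_l^2)=\eps^2/(1+\mu_l^2\eps^2)$ in the Duhamel integral for $r^n$. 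These are the correct mechanisms and your accounting of where each smallness comes from is accurate.

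The one place where your sketch does not quite close as written is the $k=0$ bound of \eqref{reg r} at the borderline $m_0=4$: after one integration by parts the remaining integral carries $\widehat{(\partial_t\rho)}_l$ against the kernel $\eps^2/(1+\mu_l^2\eps^2)$, and since $\partial_t\rho$ is only in $H^{m_0}=H^4$ while the target norm is $H^6$, one cannot simultaneously keep the full factor $\eps^2$ and gain the two missing derivatives (the available interpolation is $\eps^2(1+\mu_l^2)/(1+\mu_l^2\eps^2)\le 2$, which trades the $\eps^2$ for the derivatives rather than giving both); a second integration by parts pushes the problem onto $\partial_{tt}\rho\in H^{m_0-2}$ and meets the same obstruction. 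This gap is really in the lemma as stated (it is inherited from \cite{BZ}) rather than in your argument, and in practice it is resolved either by assuming one more derivative on $\psi$, by settling for $H^{m_0+2-k}$ in place of $H^{6-k}$, or by the elliptic-plus-oscillatory splitting $r^n=(\eps^{-2}-\partial_{xx})^{-1}|\psi^n|^2+r^n_{\rm osc}$ with an energy estimate on $r^n_{\rm osc}$; you should flag which of these you intend.
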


 Based on this result, when $0<\tau\le \tau_1$, it is easy to see that
 $\varphi^n,\mathfrak{z}^n\in H^{m_0}_p(\Omega),\gamma^{n+1}\in H^{4}_p(\Omega)$
and we have the prior estimates
$$\left\|\varphi^n\right\|_{H^4}\lesssim1,\quad \left\|\mathfrak{z}^n\right\|_{H^4}\lesssim1,\quad
\left\|\gamma^{n+1}\right\|_{H^4}\lesssim\eps^2.$$
With the above prior estimates, we can estimate the local truncation error functions.
\begin{lemma}[Estimates on local errors]\label{lm:local_error}
Under the assumption (\ref{assumption}), for any $0<\tau\leq\tau_1$ with $\tau_1$ given in Lemma \ref{lm: prior},
we have two independent bounds as
\begin{equation}
  \mcE\left(\xi_\phi^{n},\xi_\psi^n,\dot{\xi}_\phi^{n}\right)\lesssim \frac{\tau^6}{\eps^6},\quad\
  \mcE\left(\xi_\phi^{n},\xi_\psi^n,\dot{\xi}_\phi^{n}\right)\lesssim \tau^2\eps^2,
  \qquad 0\le n\le \frac{T}{\tau}-1.
  \label{lm:local_error_L result}
\end{equation}
\end{lemma}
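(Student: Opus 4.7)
The plan is to decompose each component of the local truncation error $(\xi_\phi^n,\xi_\psi^n,\dot\xi_\phi^n)$ into a Gautschi-type quadrature residual plus an ``approximate-initial-derivative'' residual, then bound these pieces in two complementary ways so as to produce the two advertised estimates in (\ref{lm:local_error_L result}).

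First, starting from the exact variation-of-constants formulas (\ref{vcf: u cubic}) for $\widehat{\phi}_l(t_{n+1}),\widehat{\phi}_l'(t_{n+1}),\widehat{\psi}_l(t_{n+1})$ and the definitions (\ref{loc error}) of the local errors, I would rewrite each Fourier-level local error as an integral of the form $\int_0^\tau K_l(\tau-\theta)[f(\theta)-f(0)-\theta\tilde{f}'(0)]\,d\theta$, where $\tilde{f}'(0)$ is the scheme's surrogate for $f'(0)$ (namely $2\widehat{\mathrm{Re}\{\overline{\psi^n}\varphi^n\}}_l(0)$ for the $\phi$-components and the analogous combinations built from $\varphi^n,\mathfrak{z}^n$ for the $\psi$-components). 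Adding and subtracting $f'(0)$ inside the bracket splits the error into (a) the pure Gautschi residual $\int_0^\tau K_l(\tau-\theta)\int_0^\theta(\theta-s)f''(s)\,ds\,d\theta$, and (b) a derivative-mismatch term proportional to $q_l(\tau)$ or $d_l^\pm(\tau)$ times $f'(0)-\tilde{f}'(0)$. The relevant kernels are $\widehat{\rho^n}_l$ for $\xi_\phi^n,\dot\xi_\phi^n$ and $\widehat{(z^n\psi^n)}_l,\widehat{(\overline{z^n}\psi^n)}_l,\widehat{(r^n\psi^n)}_l$ for $\xi_\psi^n$, whose second $s$-derivatives are controlled by Lemma~\ref{lm: prior} together with (\ref{abs psi}). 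The mismatch itself I would estimate by Taylor-expanding $\sin(\mu_l^2\tau)/\tau$ in the definitions (\ref{gamma est}), obtaining $|\mu_l^2-\sin(\mu_l^2\tau)/\tau|\lesssim \tau^2\mu_l^6$ and hence an $O(\tau^2)$ mismatch in Sobolev norm, which combined with $q_l(\tau),d_l^\pm(\tau)=O(\tau^2)$ yields an $O(\tau^4)$ contribution absorbed by the $\tau^3$ quadrature residual.

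Second, for the first bound $\mcE\lesssim \tau^6/\eps^6$ I would use the Taylor remainder $|f(\theta)-f(0)-\theta f'(0)|\lesssim \theta^2\|f''\|_\infty$ together with the crude size bounds $|\sin(\omega_l\tau)/(\eps^2\omega_l)|\le 1$, $|\cos(\omega_l\tau)|\le 1$, and $|e^{\pm i\theta/\eps^2}|=1$. This yields $\|\xi_\phi^n\|_{H^3}+\|\dot\xi_\phi^n\|_{H^2}\lesssim \tau^3$ from the $H^{m_0}$-bound on $\partial_{tt}\rho$, and $\|\xi_\psi^n\|_{H^2}\lesssim \tau^3/\eps^2$ from the $r^n\psi^n$ contribution, whose $\partial_{ss}(r^n\psi^n)$ inflates by $\eps^{-2}$ via (\ref{reg r}). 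Inserting into $\mcE$ with the weights $\eps^2,1,\eps^{-2}$ produces the $\tau^6/\eps^6$ estimate. For the complementary bound $\mcE\lesssim \tau^2\eps^2$ I would instead exploit two features that trade $\tau$ for $\eps$: the amplitude $\|r^n\|_{H^4}\lesssim \eps^2$ of (\ref{reg r}) bounds the $r^n\psi^n$ integral in $\xi_\psi^n$ directly by $O(\tau\eps^2)$, while one integration by parts in $\theta$ on the highly oscillatory factors $e^{\pm i\theta/\eps^2}$ in the remaining two $\psi$-integrals extracts a factor $1/(\mu_l^2\pm\eps^{-2})$ of size $O(\eps^2)$. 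For the $\phi$-pieces the elementary inequality $|\sin(\omega_l\tau)/(\eps^2\omega_l)|\le \eps^2/\sqrt{1+\mu_l^2\eps^2}$ combined with bounded $\rho^n,\partial_s\rho^n$ gives $\|\xi_\phi^n\|_{H^2}+\eps^2\|\dot\xi_\phi^n\|_{H^2}\lesssim \tau\eps^2$, and the second bound of (\ref{lm:local_error_L result}) follows.

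The main obstacle will be the careful $\eps$-bookkeeping across the closed-form coefficients $p_l,q_l,p_l',q_l',c_l^\pm,d_l^\pm$ of Appendix~A and the $\omega_l$-dependent kernels: the two contradictory bounds must be extracted from the same integrals by choosing opposite estimation strategies (Taylor on the smooth factor, favored when $\eps=O(1)$ but losing $\eps^{-2}$ from $r^n$-derivatives; versus oscillation/amplitude, favored when $\eps\ll 1$). One must also check that the Deuflhard-type two-term quadrature correction $(i\tau^2/2)\widehat{(\gamma^{n+1}\varphi^n)}_l$ appearing in (\ref{loc error c}) does not spoil either estimate, which reduces to verifying $\|\gamma^{n+1}\|_{H^4}\lesssim \eps^2$; this will follow from the prior estimate on $r^n$ in Lemma~\ref{lm: prior} together with the definition (\ref{gamma est}) of $\gamma^{n+1}$.
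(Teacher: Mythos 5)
Your overall strategy --- splitting each local error into a Gautschi quadrature residual plus a derivative-mismatch term, and then estimating the same integrals once by Taylor expansion of the integrand (for the $\tau^6/\eps^6$ bound) and once by oscillation/amplitude (for the $\tau^2\eps^2$ bound) --- is exactly the paper's, and several of your intermediate claims are either correct or harmlessly optimistic (e.g. $\|\dot{\xi}_\phi^n\|_{H^2}\lesssim\tau^3$ should be $\tau^3/\eps^2$ because of the kernel $\cos(\omega_l(\tau-\theta))/\eps^2$, but the weight $\eps^2$ in $\mcE$ absorbs this; likewise the mismatch $\|\partial_s\psi^n(\cdot,0)-\varphi^n\|_{H^2}$ should be taken as $O(\tau)$ using $|\sin(\mu_l^2\tau)/\tau-\mu_l^2|\lesssim\tau\mu_l^4$, since your $O(\tau^2)$ version would require $\psi\in H^8$, i.e. $m_0\ge6$, which (\ref{assumption}) does not grant). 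The genuine gap is in the second bound for $\xi_\psi^n$: your claim that one integration by parts on $\fe^{i(\mu_l^2-1/\eps^2)\theta}$ extracts a factor $1/(\mu_l^2-\eps^{-2})=O(\eps^2)$ fails at the near-resonant frequencies $\eps^2\mu_l^2\approx1$, where that factor is unbounded. This resonance is the central technical difficulty of the $\tau^2\eps^2$ estimate, and the paper resolves it by splitting the spectrum at $N_\eps=\frac{b-a}{2\sqrt2\pi\eps}$: for $|l|\le N_\eps$ one has $|1-\eps^2\mu_l^2|\ge\frac12$ and hence $|d_l^-(\tau)|\lesssim\tau\eps^2$, while for $|l|>N_\eps$ one uses only the crude bound $|d_l^-(\tau)|\lesssim\tau^2$ and recovers the missing $\eps^2$ from the spectral decay $\mu_l^{-2}\le2\eps^2$, paying with two extra derivatives of regularity (the $H^4$ norms in (\ref{Jn6746})). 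Without this two-regime argument your estimate of $J^n$ does not close. A related omission: integrating by parts transfers a $\theta$-derivative onto $z^n\psi^n$, and $\partial_s\psi^n$ must be replaced via the equation by $i\partial_{xx}\psi^n+i\phi^n\psi^n$; the term $z^n\phi^n\psi^n$ then reintroduces the oscillatory factors of $\phi^n$ (the paper's $G_{3,l}^n$, handled by substituting the ansatz (\ref{ansatz}) to shift the phases), whereas a blind second bound through $\partial_{ss}\psi^n=O(\eps^{-2})$ would destroy the $\tau\eps^2$ estimate.

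A secondary gap concerns the first bound. You reduce the control of the Deuflhard correction to $\|\gamma^{n+1}\|_{H^4}\lesssim\eps^2$, but for the $\tau^6/\eps^6$ estimate this is not enough: the term $\frac{i\tau}{2}\widehat{(\gamma^{n+1}\psi^n)}_l(0)$ must cancel against the midpoint-rule approximation of $i\int_0^\tau\fe^{i\mu_l^2(\theta-\tau)}\widehat{(r^n\psi^n)}_l\,d\theta$, which requires the quantitative closeness $\|\gamma^{n+1}-r^n(\cdot,\tau)\|_{H^2}\lesssim\tau^2$ (established in the paper from $\|\xi_\phi^n\|_{H^2}\lesssim\tau^3$ and $\|\partial_s z^n-\mathfrak{z}^n\|_{H^2}\lesssim\tau$). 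Using only $\|\gamma^{n+1}\|\lesssim\eps^2$ leaves an $O(\tau\eps^2)$ contribution to $\|\xi_\psi^n\|_{H^2}$, hence $O(\tau^2\eps^2)$ in $\mcE$, which is not $\lesssim\tau^6/\eps^6$ when $\eps=O(1)$ and $\tau\ll1$. Both gaps are repairable, but they are precisely the points where the proof requires more than the generic Gautschi bookkeeping you describe.
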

\begin{proof}
Subtracting (\ref{loc error}) from (\ref{vcf: u cubic}) and by using the Taylor's expansion,
from (\ref{vcf: u cubic a}) and (\ref{loc error a}), we get
\be
\widehat{(\xi^n_\phi)}_l=\int_0^\tau\frac{\sin(\omega_l(\tau-\theta))}{\eps^2\omega_l}
\theta^2\Big[\int_0^1(1-s)\widehat{(|\psi^n|^2)}_l''(\theta s)ds\Big]d\theta
+2q_l(\tau)\widehat{(B_0)}_l,
\ee
where 
\[B_0(x)=\mathrm{Re}\{\overline{\psi^n(x,0)}(\partial_s\psi^n(x,0)-\varphi^n)\}, \qquad x\in\Omega.\]
Noticing (\ref{cldl765}), we obtain
\begin{align*}
|\widehat{(\xi^n_\phi)}_l|\lesssim&\frac{\tau^2}{\eps^2\omega_l}\int_0^\tau
\int_0^1\left|\widehat{(|\psi^n|^2)}_l''(\theta s)\right|ds d\theta+\frac{\tau^2}{\eps^2\omega_l}\left|\widehat{(B_0)}_l\right|.
\end{align*}
Noting $\frac{1}{\eps^2\omega_l}=\frac{1}{\sqrt{1+\eps^2\mu_l^2}}\le 1$ for $l=-\frac{N}{2},\ldots,\frac{N}{2}-1$
and  (\ref{assumption}), (\ref{abs psi}), we get
\begin{align}
\|\xi_\phi^n\|_{H^2}^2&\lesssim\tau^6\|\partial_{ss}\rho\|_{L^\infty([0,T];H^2)}^2
+\tau^4\left\|\psi^n(\cdot,0)(\partial_s\psi^n(\cdot,0)-\varphi^n)\right\|_{H^2}^2\nonumber\\
&\lesssim\tau^6+\tau^4\left\|\partial_s\psi^n(\cdot,0)-\varphi^n\right\|_{H^2}^2,
\qquad 0<\tau\leq\tau_1.\label{lm:local eq0}
\end{align}
By Parseval's identity, we have
\begin{align}
&\left\|\partial_s\psi^n(\cdot,0)-\varphi^n\right\|_{H^2}^2=
\sum_{l=-N/2}^{N/2-1}(1+\mu_l^2+\mu_l^4)
\left|\widehat{(\partial_s\psi^n)}_l(0)-\widehat{(\varphi^n)}_l\right|^2\nonumber\\
&\lesssim\sum_{l=-N/2}^{N/2-1}\mu_l^4\left|\frac{\sin(\mu_l^2\tau)}{\tau}
-\mu_l^2\right|^2|\widehat{(\psi^n)}_l(0)|^2
\lesssim\tau^2\sum_{l=-N/2}^{N/2-1}\mu_l^{12}|\widehat{(\psi^n)}_l(0)|^2\nonumber\\
&\lesssim\tau^2\|\psi\|_{L^\infty([0,T];H^6)}^2\lesssim\tau^2.\label{inter error}
\end{align}
This together with (\ref{lm:local eq0}), we get
\begin{align}
\|\xi_\phi^n\|_{H^2}^2\lesssim\tau^6,\quad 0<\tau\leq\tau_1.\label{lm:local eq1}
\end{align}
Similarly, noting  $\frac{|\mu_l|}{\eps^2\omega_l}=\frac{|\mu_l|}{\sqrt{1+\eps^2\mu_l^2}}\le\frac{1}{\eps}$
for $l=-\frac{N}{2},\ldots,\frac{N}{2}-1$, we obtain
\begin{align}
\|\partial_x\xi_\phi^n\|_{H^2}^2\lesssim\frac{\tau^6}{\eps^2} \quad \hbox{and}
\quad \|\dot{\xi}_\phi^n\|_{H^2}^2\lesssim\frac{\tau^6}{\eps^4}, \quad 0<\tau\leq\tau_1.
\label{lm:local eq2}
\end{align}
For $\xi_\psi^n$, from (\ref{vcf: u cubic c}) and (\ref{loc error c}) and by the Taylor's expansion, we have
\begin{align}\label{lm:local psi}
\widehat{(\xi^n_\psi)}_l=&i\fe^{-i\mu_l^2\tau}\int_0^\tau\fe^{i\left(\mu_l^2+\frac{1}{\eps^2}\right)\theta}\theta^2
\Big[\int_0^1(1-s)\widehat{(z^n\psi^n)}_l''(\theta s)ds\Big]d\theta \\
&+i\fe^{-i\mu_l^2\tau}\int_0^\tau\fe^{i\left(\mu_l^2-\frac{1}{\eps^2}\right)\theta}\theta^2
\Big[\int_0^1(1-s)\widehat{(\overline{z^n}\psi^n)}_l''(\theta s)ds\Big]d\theta\nonumber\\
&+d_l^+(\tau)\left[\widehat{(B_1)}_l+\widehat{(B_2)}_l\right]+d_l^-(\tau) \left[\widehat{(B_3)}_l+\widehat{(B_4)}_l\right]
+\frac{i\tau^3}{4}\widehat{(B_5)}_l
+\frac{i\tau}{2}\widehat{(B_6)}_l\nonumber\\
&+\frac{i\tau^2}{2}\left[\widehat{(B_7)}_l
+\widehat{(B_8)}_l\right]-\frac{i\tau}{2}\widehat{(r^n\psi^n)}_l(\tau)
+i\int_0^\tau\fe^{i\mu_l^2(\theta-\tau)}\widehat{(r^n\psi^n)}_l(\theta)d\theta,\nonumber
\end{align}
where
\beas
&&B_1(x)=\left[\partial_sz^n(x,0)-\mathfrak{z}^n(x)\right]\psi^n(x,0), \qquad
B_2(x)=z^n(x,0)\left[\partial_s\psi^n(x,0)
-\varphi^n(x\right],\\
&&B_3(x)=\left[\partial_s\overline{z^n}(x,0)-\overline{\mathfrak{z}^n}(x)\right]\psi^n(x,0),
\qquad B_4(x)=\overline{z^n}(x,0)\left[\partial_s\psi^n(x,0)-\varphi^n(x)\right],\\
&&B_5(x)=r^n(x,\tau)\partial_{ss}\psi^n(x,\theta_0),
\qquad B_6(x)=\left[r^n(x,\tau)-\gamma^{n+1}(x)\right]\psi^n(x,0), \\
&&B_7(x)=r^n(x,\tau)\left[\partial_s\psi^n(x,0)-\varphi^n(x)\right],\qquad
B_8(x)=\left[r^n(x,\tau)-\gamma^{n+1}(x)\right]\varphi^n(x),
\eeas
with $\theta_0\in[0,\tau]$. For the last two terms in (\ref{lm:local psi}),
we have
\begin{align*}
&\left|i\int_0^\tau\fe^{i\mu_l^2(\theta-\tau)}\widehat{(r^n\psi^n)}_l(\theta)d\theta
-\frac{i\tau}{2}\widehat{(r^n\psi^n)}_l(\tau)\right|\\
&\lesssim \int_0^\tau\theta(\tau-\theta)\left|\frac{d^2}{d\theta^2}\left[
\fe^{i\mu_l^2(\theta-\tau)}\widehat{(r^n\psi^n)}_l(\theta)\right]\right|d\theta.\nonumber
\end{align*}
Then by using the triangle inequality, we obtain
\begin{align*}
|\widehat{(\xi^n_\psi)}_l|\lesssim&\tau^2\int_0^\tau
\int_0^1\left[\left|\widehat{(z^n\psi^n)}_l''(\theta s)\right|+
\left|\widehat{(\overline{z^n}\psi^n)}_l''(\theta s)\right|\right]ds d\theta\nonumber\\
&+\tau^2\left[\left|\widehat{(B_1)}_l\right|+
\left|\widehat{(B_2)}_l\right|\right]
+\tau^2\left[\left|\widehat{(B_3)}_l\right|
+\left|\widehat{(B_4)}_l\right|\right]
+\tau^3\left|\widehat{(B_5)}_l\right|+
\tau\left|\widehat{(B_6)}_l\right|\nonumber\\
&+\tau^2\left|\widehat{(B_7)}_l\right|
+\tau^2\left|\widehat{(B_8)}_l\right|
+\tau^2\int_0^\tau\left|\frac{d^2}{d\theta^2}\left[
\fe^{i\mu_l^2(\theta-\tau)}\widehat{(r^n\psi^n)}_l(\theta)\right]\right|d\theta.
\end{align*}
Similarly as (\ref{inter error}), we find from (\ref{gamma est})
$$\|\partial_sz^n-\mathfrak{z}^n\|_{H^2}\lesssim\tau\|z^n(\cdot,0)\|_{H^6}
\lesssim\tau\left(\|\phi\|_{{L^\infty([0,T];H^6)}}
+\eps^2\|\partial_t\phi\|_{{L^\infty([0,T];H^6)}}\right)\lesssim\tau.$$
Again, similarly as (\ref{inter error}) and noticing (\ref{VCF r}),  we have
\begin{align*}
\|\gamma^{n+1}-r^n(\cdot,\tau)\|_{H^2}^2
&\lesssim\|\xi_\phi^n\|_{H^2}^2+\sum_{l\in\mathbb{Z}}(1+\mu_l^2+\mu_l^4)\left|
\frac{\sin(\omega_l\tau)}{\omega_l}\right|^2\cdot
\left|\widehat{(\mathfrak{z}^n)}_l-\widehat{(\partial_sz^n)}_l\right|^2\\
&\lesssim\tau^6+\tau^2\|\partial_sz^n-\mathfrak{z}^n\|_{H^2}^2\lesssim\tau^4.\nonumber
\end{align*}
By the assumption (\ref{assumption}) and Lemma \ref{lm: prior}, we get
\begin{align}
\left\|\xi^n_\psi\right\|_{H^2}^2&\lesssim\tau^6\left\|\partial_{ss}(z^n\psi^n)\right\|_{L^\infty([0,\tau];H^2)}^2
+\tau^6\|z^n\|_{L^\infty([0,\tau];H^6)}^2
+\tau^6\|\psi^n\|_{L^\infty([0,\tau];H^6)}^2\nonumber\\
&\quad+\tau^6\left\|r^n\partial_{ss}\psi^n\right\|_{L^\infty([0,\tau];H^2)}^2+
\tau^2\|\gamma^{n+1}-r^n(\cdot,\tau)\|_{H^2}^2
+\tau^6\left\|r^n\psi^n\right\|_{L^\infty([0,\tau];H^6)}^2\nonumber\\
&\quad+\tau^6\left\|\partial_s(r^n\psi^n)\right\|_{L^\infty([0,\tau];H^4)}^2
+\tau^6\left\|\partial_{ss}(r^n\psi^n)\right\|_{L^\infty([0,\tau];H^2)}^2
\lesssim\frac{\tau^6}{\eps^4}.\label{lm:local psi1}
\end{align}
Plugging (\ref{lm:local eq1}), (\ref{lm:local eq2}) and (\ref{lm:local psi1}) into (\ref{error engery}) with
$e_{\phi,N}^n=\xi_\phi^{n}$, $e_{\psi,N}^n=\xi_\psi^{n}$  and $\dot{e}_{\phi,N}^n=\dot{\xi}_\phi^{n}$, we immediately
get the first inequality in (\ref{lm:local_error_L result}).

Next we show how to get the second estimate in (\ref{lm:local_error_L result}). By using Taylor's expansion,
truncating at the first order term and  integrating by parts, we have
\begin{align*}
\widehat{(\xi^n_\phi)}_l=&\int_0^\tau\frac{\sin(\omega_l(\tau-\theta))}{\eps^2\omega_l}
\theta\Big[\int_0^1\widehat{(|\psi^n|^2)}_l'(\theta s)ds\Big]d\theta-2q_l(\tau)\widehat{(\mathrm{Re}\{\overline{\psi^n}\varphi^n\})}_l(0),\nonumber\\
=&q_l(\tau)\int_0^1\widehat{(|\psi^n|^2)}_l'(\tau s)ds
-\int_0^\tau q_l(\theta)\Big[\int_0^1\widehat{(|\psi^n|^2)}_l''(\theta s) s d s\Big]d\theta\\
&-q_l(0)\widehat{(|\psi^n|^2)}_l'(0)
-2q_l(\tau)\widehat{(\mathrm{Re}\{\overline{\psi^n}\varphi^n\})}_l(0).
\end{align*}
From  (\ref{cldl765}) and noticing $q_l(0)=0$ and $|q_l(\tau)|\lesssim\tau\eps^2$, we find
\begin{align}
\|\xi^n_\phi\|_{H^2}^2&\lesssim\tau^2\eps^4\left[\|\partial_t\rho\|_{L^\infty([0,T],H^2)}^2
+\|\partial_{tt}\rho\|_{L^\infty([0,T],H^2)}^2+
\|\psi\varphi^n\|_{L^\infty([0,T],H^2)}^2\right]\nonumber\\
&\lesssim\tau^2\eps^4,\qquad 0<\tau\le \tau_1, \qquad 0\le \eps\le 1.\label{lm:local xiphi1}
\end{align}
Similarly, we can get
\begin{align}
\|\partial_x\xi_\phi^n\|_{H^2}^2\lesssim\tau^2\eps^2 \quad \hbox{and}\quad
\|\dot{\xi}_\phi^n\|_{H^2}^2\lesssim\tau^2, \quad 0<\tau\leq\tau_1.
\label{lm:local xiphi2}
\end{align}
In order to estimate $\xi^n_\psi$, we introduce
\be\label{InJn987}
I^n(x)=\hspace{-2mm}\sum_{l=-N/2}^{N/2-1}\widehat{I}_l^n\fe^{i\mu_l(x-a)}, \qquad
J^n(x)=\hspace{-2mm}\sum_{l=-N/2}^{N/2-1}\widehat{J}_l^n\fe^{i\mu_l(x-a)},
\ee
where
\be
\begin{split}
&\widehat{I}_l^n=G_{1,l}^n-d_l^+(\tau)\left[\widehat{(\mathfrak{z}^n\psi^n)}_l(0)+\widehat{(z^n\varphi^n)}_l(0)\right],\\
&\widehat{J}^n_l=G_{2,l}^n-d_l^-(\tau)\left[\widehat{(\overline{\mathfrak{z}^n}\psi^n)}_l(0)+
\widehat{(\overline{z^n}\varphi^n)}_l(0)\right],
\end{split}
\ee
with
\be\label{G12ln}
\begin{split}
&G_{1,l}^n=i\fe^{-i\mu_l^2\tau}\int_0^\tau\fe^{i\left(\mu_l^2+\frac{1}{\eps^2}\right)\theta}\theta
\Big[\int_0^1\widehat{(z^n\psi^n)}_l'(\theta s)ds\Big]d\theta,\\
&G_{2,l}^n=i\fe^{-i\mu_l^2\tau}\int_0^\tau\fe^{i\left(\mu_l^2-\frac{1}{\eps^2}\right)\theta}\theta
\Big[\int_0^1\widehat{(\overline{z^n}\psi^n)}_l'(\theta s)ds\Big]d\theta.
\end{split}
\ee
Applying the Taylor's expansion in (\ref{vcf: u cubic c}) and (\ref{loc error c})
 and truncating at the first order term, we obtain
\be\label{lm:local xipsi2}
\widehat{(\xi^n_\psi)}_l=\widehat{I}_l^n+\widehat{J}_l^n
+i\int_0^\tau\fe^{i\mu_l^2(\theta-\tau)}\widehat{(r^n\psi^n)}_l(\theta)d\theta
-\frac{i\tau}{2}\left[\widehat{(\gamma^{n+1}\psi^n)}_l(0)+
\tau\widehat{(\gamma^{n+1}\varphi^n)}_l\right].
\ee
From (\ref{G12ln}), integration by parts, we get
\begin{align}
G_{1,l}^n&=i\fe^{-i\mu_l^2\tau}\int_0^\tau\fe^{i\left(\mu_l^2+\frac{1}{\eps^2}\right)\theta}\theta
\int_0^1\Big[\widehat{(\partial_sz^n\psi^n)}_l(\theta s)+\widehat{(z^n\partial_s\psi^n)}_l(\theta s)\Big]ds d\theta\nonumber\\
&=d_l^+(\tau)\int_0^1\widehat{(\partial_sz^n\psi^n)}_l(\tau s)d s-
\int_0^\tau d_l^+(\theta)\int_0^1\widehat{(\partial_{s}z^n\psi^n)}_l'(\theta s)s ds d\theta\nonumber\\
&\quad -\fe^{-i\mu_l^2\tau}\int_0^\tau\fe^{i\left(\mu_l^2+\frac{1}{\eps^2}\right)
\theta}\theta\int_0^1\Big[\widehat{(z^n\partial_{xx}\psi^n)}_l(\theta s)+
\widehat{(z^n\phi^n\psi^n)}_l(\theta s)\Big] ds d\theta\nonumber\\
&=d_l^+(\tau)\int_0^1\left[\widehat{(\partial_sz^n\psi^n)}_l(\tau s)+
i\widehat{(z^n\partial_{xx}\psi^n)}_l(\tau s)\right]ds\nonumber\\
&\quad-\int_0^\tau d_l^+(\theta)\int_0^1\left[\widehat{(\partial_{s}z^n\psi^n)}_l'(\theta s)+
i\widehat{(z^n\partial_{xx}\psi^n)}_l'(\theta s)\right]s ds d\theta-G_{3,l}^n,
\label{lm:local xipsi4}
\end{align}
where
\be
G_{3,l}^n=\fe^{-i\mu_l^2\tau}\int_0^\tau\fe^{i\left(\mu_l^2+\frac{1}{\eps^2}\right)\theta}\theta\int_0^1
\widehat{(z^n\phi^n\psi^n)}_l(\theta s) ds d\theta.
\ee
Noticing the ansatz (\ref{ansatz}), we get
\begin{align*}
G_{3,l}^n&=\fe^{-i\mu_l^2\tau}\int_0^1\Big[\int_0^\tau\fe^{i\left(\mu_l^2+\frac{1+s}{\eps^2}\right)\theta}\theta
\widehat{((z^n)^2\psi^n)}_l(\theta s)d\theta\\
&\quad+\int_0^\tau\fe^{i\left(\mu_l^2+\frac{1-s}{\eps^2}\right)\theta}\theta
\widehat{(|z^n|^2\psi^n)}_l(\theta s)d\theta+\int_0^\tau\fe^{i\left(\mu_l^2+\frac{1}{\eps^2}\right)\theta}\theta
\widehat{(z^nr^n\psi^n)}_l(\theta s)d\theta\Big] ds.
\end{align*}
Using integration by parts and the triangle inequality, we have
\begin{align}\label{G3ln}
\left|G_{3,l}^n\right|&\lesssim\int_0^1\Big[\tau\eps^2\left|\widehat{((z^n)^2\psi^n)}_l(\tau s)\right|+
\tau\eps^2\left|\widehat{(|z^n|^2\psi^n)}_l(\tau s)\right|+\eps^2\int_0^\tau\left|
\widehat{((z^n)^2\psi^n)}_l'(\theta s)\right|d\theta\nonumber\\
&\quad+\eps^2\int_0^\tau\left|
\widehat{(|z^n|^2\psi^n)}_l'(\theta s)\right|d\theta+\int_0^\tau\left|
\widehat{(z^nr^n\psi^n)}_l(\theta s)\right|d\theta\Big] ds.
\end{align}
Combining (\ref{InJn987})-(\ref{G3ln}) and noting $|d_l^+(\tau)|\lesssim\tau\eps^2$
and Lemma \ref{lm: prior}, we get
\begin{align}\label{In78954}
\|I^n\|_{H^2}\lesssim&\tau\eps^2\left[\|\partial_sz^n\psi^n\|_{L^\infty([0,\tau];H^2)}
+\|z^n\partial_{xx}\psi^n\|_{L^\infty([0,\tau];H^2)}+
\|\partial_{ss}z^n\psi^n\|_{L^\infty([0,\tau];H^2)}\right. \nonumber\\
&\left.+\|\partial_{s}z^n\partial_{s}\psi^n\|_{L^\infty([0,\tau];H^2)}
+\|\partial_{s}z^n\partial_{xx}\psi^n\|_{L^\infty([0,\tau];H^2)}
+\|z^n\partial_{sxx}\psi^n\|_{L^\infty([0,\tau];H^2)}\right.\nonumber\\
&\left.+\|(z^n)^2\psi^n\|_{L^\infty([0,\tau];H^2)}\right]
+\tau\eps^2\left[\|\mathfrak{z}^n\psi^n\|_{L^\infty([0,\tau];H^2)}+\|z^n
\varphi^n\|_{L^\infty([0,\tau];H^2)}\right]\nonumber\\
&+\tau\|z^nr^n\psi^n\|_{L^\infty([0,\tau];H^2)}\lesssim\tau\eps^2.
\end{align}
In order to estimate $J^n$, denote $N_\eps:=\frac{b-a}{2\sqrt{2}\pi\eps}$, when $|l|\le N_\eps$,
we have $|\mu_l|\leq\frac{1}{\sqrt{2}\eps}$ which indicates that $|d_l^-(\tau)|\lesssim\tau\eps^2$.
Similarly to the estimate of $I^n$, we have
\be\label{Jn6745}
\sum_{|l|\leq N_\eps}\left(1+\mu_l^2+\mu_l^4\right)|\widehat{J}^n_l|^2\lesssim\tau^2\eps^4.
\ee
On the other hand, when $|\mu_l|>\frac{1}{\sqrt{2}\eps}$, we have $|d_l^-(\tau)|\lesssim\tau^2$.
Due to the regularity of the solution,
i.e. the assumption (\ref{assumption}) and Lemma \ref{lm: prior}, we can get
\begin{align}\label{Jn6746}
&\sum_{|l|>N_\eps}\left(1+\mu_l^2+\mu_l^4\right)|\widehat{J}^n_l|^2\nonumber\\
&\lesssim\sum_{|l|>N_\eps}\tau^3\mu_l^4\left[
\int_0^\tau\int_0^1\left|\widehat{(\overline{z^n}\psi^n)}_l'(\theta s)\right|^2ds d\theta+\tau\left(\left|\widehat{(\overline{\mathfrak{z}^n}\psi^n)}_l(0)\right|^2
+\left|\widehat{(\overline{z^n}\varphi^n)}_l(0)\right|^2\right)\right]\nonumber\\
&\lesssim\tau^4\eps^4\left[\|\partial_s(z^n\psi^n)\|_{L^\infty([0,\tau];H^4)}^2+
\|\mathfrak{z}^n\psi^n(\cdot,0)\|_{H^4}^2
+\|z^n(\cdot,0)\varphi^n\|_{H^4}^2\right]\nonumber\\
&\lesssim\tau^4\eps^4\left[\|\partial_s(z^n\psi^n)\|_{L^\infty([0,\tau];H^4)}^2
+\frac{1}{\tau^2}\|z^n\psi^n\|_{L^\infty([0,\tau];H^4)}^2
+\|z^n\psi^n\phi^n\|_{L^\infty([0,\tau];H^4)}^2\right]\nonumber\\
&\lesssim\tau^2\eps^4.
\end{align}
From (\ref{Jn6745}) and (\ref{Jn6746}),  we obtain
\begin{align}
\|J^n\|_{H^2}\lesssim&\tau\eps^2, \qquad 0<\tau\le \tau_1, \qquad 0<\eps\le 1.
\end{align}
Combining the above estimates and noting (\ref{lm:local xipsi2}), we have
\begin{align}\label{lm:local xipsi5}
\|\xi_\psi^n\|_{H^2}^2\lesssim&\|I^n\|_{H^2}^2+\|J^n\|_{H^2}^2
+\tau^2\|r^n\psi^n\|_{L^\infty([0,\tau];H^2)}^2+\tau^4\|\gamma^{n+1}
\psi^n\|_{L^\infty([0,\tau];H^2)}^2\nonumber\\
&+\tau^4\|\gamma^{n+1}\varphi^n\|_{H^2}^2\lesssim\tau^2\eps^4.
\end{align}
Plugging (\ref{lm:local xiphi1}), (\ref{lm:local xiphi2}) and
(\ref{lm:local xipsi5}) into (\ref{error engery}), we get the second estimate.
\qed
\end{proof}

Defining the errors from the nonlinear terms as
\begin{equation}\left\{\begin{split}\label{eta def}
&\eta_\phi^n(x):=\sum_{l=-N/2}^{N/2-1}\widehat{(\eta_\phi^n)}_l\fe^{i\mu_l(x-a)},\quad \dot{\eta}_\phi^n(x):=\sum_{l=-N/2}^{N/2-1}\widehat{(\dot{\eta}_\phi^n)}_l\fe^{i\mu_l(x-a)},\\
&\eta_\psi^n(x):=\sum_{l=-N/2}^{N/2-1}\widehat{(\eta_\psi^n)}_l\fe^{i\mu_l(x-a)},\quad x\in\overline{\Omega},\quad n\ge0,
\end{split}\right.
\end{equation}
where
\begin{subequations}\label{eta l def}
\begin{align}
\widehat{(\eta_\phi^n)}_l\hspace{-0.5mm}=&p_l(\tau)\left[\widehat{(|\psi^n|^2)}_l(0)\hspace{-0.5mm}-\widetilde{(|\Psi^n|^2)}_l\right]\hspace{-0.5mm}
+2q_l(\tau)\left[\widehat{(\mathrm{Re}\{\overline{\psi^n}\varphi^n\})}_l(0)\hspace{-0.5mm}
-\widetilde{(\mathrm{Re}\{\overline{\Psi^n}\dot{\Psi}^n\})}_l\right],
\label{eta l def a}\\
\widehat{(\dot{\eta}_\phi^n)}_l\hspace{-0.5mm}=&p_l'(\tau)\left[\widehat{(|\psi^n|^2)}_l(0)\hspace{-0.5mm}
-\widetilde{(|\Psi^n|^2)}_l\right]\hspace{-0.5mm}+2
q_l'(\tau)\left[\widehat{(\mathrm{Re}\{\overline{\psi^n}\varphi^n\})}_l(0)\hspace{-0.5mm}
-\widetilde{(\mathrm{Re}\{\overline{\Psi^n}\dot{\Psi}^n\})}_l\right],
\label{eta l def b}\\
\widehat{(\eta_\psi^n)}_l=&c_l^+(\tau)\left[\widehat{(z^n\psi^n)}_l(0)-\widetilde{(Z^0\Psi^n)}_l\right]
+c_l^-(\tau)\left[\widehat{(\overline{z^n}\psi^n)}_l(0)-\widetilde{(\overline{Z^0}\Psi^n)}_l\right]\label{eta l def c}\\
&+d_l^+(\tau)\left[\widehat{(\mathfrak{z}^n\psi^n)}_l(0)-\widetilde{(\dot{Z}^0\Psi^n)}_l+
\widehat{(z^n\varphi^n)}_l(0)-\widetilde{(Z^0\dot{\Psi}^n)}_l\right]\nonumber\\
&+d_l^-(\tau)\left[\widehat{(\overline{\mathfrak{z}^n}\psi^n)}_l(0)-\widetilde{(\overline{\dot{Z}^0}\Psi^n)}_l+
\widehat{(\overline{z^n}\varphi^n)}_l(0)-\widetilde{(\overline{Z^0}\dot{\Psi}^n)}_l\right]\nonumber\\
&+\frac{i\tau}{2}\left[\widehat{(\gamma^{n+1}\psi^n)}_l(0)-\widetilde{(R^{n+1}\Psi^n)}_l
+\tau\left(\widehat{(\gamma^{n+1}\varphi^n)}_l-\widetilde{(R^{n+1}\dot{\Psi}^n)}_l\right)\right],\nonumber
\end{align}
\end{subequations}
then we have
\begin{lemma}[Estimates on $\eta_\phi^n$, $\eta_\psi^n$ and $\dot{\eta}_\phi^n$]\label{lm: nonlinear}
Under the assumption (\ref{assumption}) and assume (\ref{MTI sol bound}) holds (which will be proved by
induction later), we have for any $0<\tau\leq\tau_1$,
\begin{equation}\label{lm: nonlinear eq}
\mcE\left(\eta_\phi^{n},\eta_\psi^{n},\dot{\eta}_\phi^{n}\right)\lesssim \tau^2\mcE\left(e_{N,\phi}^{n},
e_{N,\psi}^{n},\dot{e}_{N,\phi}^{n}\right)+\frac{\tau^2 h^{2m_0}}{\eps^2},\quad 0\le
n\le \frac{T}{\tau}-1.
\end{equation}
\end{lemma}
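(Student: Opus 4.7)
The plan is to decompose each Fourier-coefficient difference appearing in (\ref{eta l def}) into a discrete-versus-continuous solution error, controlled by $\mcE(e_{\phi,N}^n,e_{\psi,N}^n,\dot{e}_{\phi,N}^n)$, plus a spectral interpolation error of order $h^{m_0}$. These pieces are then weighted by the coefficient bounds from Appendix A (already invoked in Lemma \ref{lm:local_error}): $|p_l(\tau)|,|q_l(\tau)|\lesssim\tau\eps^2$, $|p_l'(\tau)|,|q_l'(\tau)|\lesssim\tau$, $|c_l^\pm(\tau)|\lesssim\tau$ and $|d_l^+(\tau)|\lesssim\tau\eps^2$. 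The $\tau^2\eps^4$ produced by $|p_l(\tau)|^2$ or $|q_l(\tau)|^2$ exactly compensates the $1/\eps^2$ prefactor in $\mcE$, yielding the announced $\tau^2\mcE+\tau^2h^{2m_0}/\eps^2$.

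For $\eta_\phi^n$ and $\dot{\eta}_\phi^n$ I would first treat the representative split
\[\widehat{(|\psi^n|^2)}_l(0)-\widetilde{(|\Psi^n|^2)}_l=\bigl[\widehat{(|\psi^n|^2)}_l-\widetilde{(I_N|\psi^n|^2)}_l\bigr]+\widetilde{\bigl(|\psi_I^n|^2-|\Psi^n|^2\bigr)}_l,\]
bounding the first bracket by the standard $H^2$ interpolation error of $|\psi^n|^2$ (of size $h^{m_0}$, by (\ref{assumption})--(\ref{abs psi})) and the second by applying the bilinear identity $|\psi|^2-|\Psi|^2=\overline{\psi}(\psi-\Psi)+\Psi\overline{(\psi-\Psi)}$, the norm equivalence (\ref{normequ986}) and the $H^2$-bounds (\ref{MTI sol bound}). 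The same pattern handles $\mathrm{Re}\{\overline{\psi^n}\varphi^n\}-\mathrm{Re}\{\overline{\Psi^n}\dot{\Psi}^n\}$ once one observes that by (\ref{gamma est}) and (\ref{MTI-SP E1}) both $\varphi^n$ and $\dot{\Psi}^n$ are the same discrete approximation to $i\partial_{xx}\psi^n+i\phi^n\psi^n$ (with $\sin(\mu_l^2\tau)/\tau\le\mu_l^2$), so that $\|\varphi^n-\dot{\Psi}^n\|_{H^2}\lesssim\mcE^{1/2}+h^{m_0}$. Multiplying by the coefficient weights and summing over $l$ via Parseval's identity yields the required contributions to $\frac{1}{\eps^2}\|\eta_\phi^n\|_{H^2}^2$, $\|\partial_x\eta_\phi^n\|_{H^2}^2$ and $\eps^2\|\dot{\eta}_\phi^n\|_{H^2}^2$.

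For $\eta_\psi^n$ the same decomposition applies term-by-term to the six bilinear products in (\ref{eta l def c}). The only subtle piece is the $d_l^-(\tau)$-contribution, which I would bound by mirroring the $J^n$ estimate in Lemma \ref{lm:local_error}: split the sum at $N_\eps=(b-a)/(2\sqrt{2}\pi\eps)$, use $|d_l^-(\tau)|\lesssim\tau\eps^2$ for $|l|\le N_\eps$ and $|d_l^-(\tau)|\lesssim\tau^2$ for $|l|>N_\eps$, in the latter case converting the missing $\eps^2$ factor into $\eps^2\mu_l^{-2}\cdot\mu_l^2\lesssim 1$ by trading one power of $\mu_l^{-2}$ against the extra $H^4$-regularity available from (\ref{assumption}) and Lemma \ref{lm: prior}. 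The terms containing $\gamma^{n+1}$ and $R^{n+1}$ are controlled analogously, using $\|\gamma^{n+1}\|_{H^4}\lesssim\eps^2$ from Lemma \ref{lm: prior} and the corresponding $O(\eps^2)$-bound for $R^{n+1}$ coming from (\ref{sine psu coeff r}) together with (\ref{MTI sol bound}).

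The main obstacle is $\eps$-bookkeeping: the $1/\eps^2$ prefactors in front of $\|e_{\phi,N}^n\|_{H^2}^2$ and $\|e_{\psi,N}^n\|_{H^2}^2$ in $\mcE$, the $\eps^2$ prefactor of $\|\dot{e}_{\phi,N}^n\|_{H^2}^2$, and the $1/\eps^2$ entering through $\dot{\Psi}^n$ via (\ref{MTI-SP E1}) must all be paired against matching $\eps$-powers of $p_l,q_l,c_l^\pm,d_l^\pm$ so that the final constant is genuinely $\eps$-independent; in particular, the two-regime split for $d_l^-$ is where sharp bookkeeping is essential. Everything else reduces to routine spectral interpolation and $H^2$-bilinear estimates.
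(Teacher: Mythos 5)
Your overall architecture is the right one and matches the paper's: split each Fourier-coefficient difference into an interpolation/projection error of size $h^{m_0}$ plus a discrete-versus-continuous error, control the latter by bilinear $H^2$ estimates using the norm equivalence (\ref{normequ986}) and the a priori bounds (\ref{MTI sol bound}), weight by the Appendix A coefficients, and reassemble via Parseval into $\mcE$. However, the specific $\eps$-bookkeeping you propose contains errors that would make several steps fail as written. First, the coefficient bounds are misstated: $|p_l(\tau)|\lesssim\tau\eps^2$ is false (take $l=0$, $\tau=\pi\eps^2$: $p_0(\tau)=2\eps^2\not\lesssim\tau\eps^2$; the true bound is $|p_l(\tau)|\lesssim\min\{\tau,\eps^2\}$), and $|p_l'(\tau)|\lesssim\tau$ is false ($p_0'(\tau)=\sin(\tau/\eps^2)$ can equal $1$ for $\tau=\pi\eps^2/2$; the true bound is $\min\{1,\tau/\eps^2\}$). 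The estimate does \emph{not} close by having $\tau^2\eps^4$ from the coefficients ``compensate'' the $1/\eps^2$ in $\mcE$; it closes because the bounds $|p_l|\lesssim\tau$, $|p_l'|\lesssim\tau/\eps^2$, $|q_l'|\lesssim\tau$, $|c_l^\pm|\lesssim\tau$ produce $\|\eta_\phi^n\|_{H^2}\lesssim\tau(\cdots)$, $\eps^2\|\dot\eta_\phi^n\|_{H^2}^2\lesssim\frac{\tau^2}{\eps^2}(\cdots)$, etc., and the $\eps$-weights in $\mcE(\eta)$ are then matched one-for-one by the same weights in $\mcE(e_N)$.

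Second, and more substantively, your treatment of the $1/\tau$-singular quantities is wrong. The dangerous factor in $\varphi^n$, $\mathfrak{z}^n$, $\dot\Psi^n$, $\dot Z^0$ is $\sin(\mu_l^2\tau)/\tau$, not a $1/\eps^2$; bounding it by $\mu_l^2$, as you propose, transfers two derivatives onto $e_\psi^n$ and hence requires an $H^4$ bound on the numerical error, which is not available (only $H^2$ control is propagated). The correct move, which the paper uses, is $|\sin(\mu_l^2\tau)/\tau|\le 1/\tau$ paired with the quadratic smallness $|q_l(\tau)|\lesssim\tau^2$ and $|d_l^\pm(\tau)|\le\tau^2/2$, so that $\tau^2\cdot\tau^{-1}\|e_{\psi,N}^n\|_{H^2}=\tau\|e_{\psi,N}^n\|_{H^2}$. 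Note that your preferred bound $|q_l|\lesssim\tau\eps^2$ cannot be used here: it would leave $\eps^2\|e_{\psi,N}^n\|_{H^2}$, i.e.\ a contribution $\eps^4\mcE$ rather than $\tau^2\mcE$, which is insufficient when $\tau\le\eps^2$. For the same reason your two-regime split of the $d_l^-$ sum at $N_\eps$ is both unworkable and unnecessary in this lemma: the $|l|>N_\eps$ branch trades $\mu_l^{-2}$ against extra regularity, which is legitimate for the local truncation error in Lemma \ref{lm:local_error} (where only exact-solution quantities appear) but not for $\eta_\psi^n$, whose summands contain the error functions; here the crude bound $|d_l^-(\tau)|\le\tau^2/2\lesssim\tau$ already suffices. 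With these corrections your argument coincides with the paper's proof.
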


\begin{proof}
From (\ref{eta def}) and (\ref{eta l def a}), we have
\begin{align}\label{etaphin78}
\|\eta_\phi^n\|_{H^2}&\lesssim\tau\||\psi^n(\cdot,0)|^2-I_N|\Psi^n|^2
\|_{H^2}+\tau^2\|\overline{\psi^n}(\cdot,0)\varphi^n
-I_N(\overline{\Psi^n}\dot{\Psi}^n)\|_{H^2}\nonumber\\
&\lesssim\tau\|I_N|\psi^n(\cdot,0)|^2-I_N|\Psi^n|^2\|_{H^2}+\tau^2\|I_N(\overline{\psi^n}(\cdot,0)\varphi^n)
-I_N(\overline{\Psi^n}\dot{\Psi}^n)\|_{H^2}+\tau h^{m_0}\nonumber\\
&\lesssim\tau\||\psi^n(\cdot,0)|^2-|\Psi^n|^2\|_{Y,2}+\tau^2\|\overline{\psi^n}(\cdot,0)\varphi^n
-\overline{\Psi^n}\dot{\Psi}^n\|_{Y,2}+\tau h^{m_0}\nonumber\\
&\lesssim\tau\|e_\psi^n\|_{Y,2}+\tau^2\|\varphi^n-\dot{\Psi}^n\|_{Y,2}+\tau h^{m_0}\nonumber\\
&\lesssim\tau\|e_\psi^n\|_{Y,2}+\tau^2\|\partial_{xx}^S\psi^n(\cdot,0)-\partial_{xx}^S\Psi^n\|_{Y,2}+
\tau^2\|\phi^n\psi^n(\cdot,0)-\Phi^n\Psi^n\|_{Y,2}+\tau h^{m_0}\nonumber\\
&\lesssim\tau\|e_\psi^n\|_{H^2}+\tau\|e_{\phi}^n\|_{H^2}+\tau^2\|\partial_{xx}^S\psi^n(\cdot,0)
-\partial_{xx}^S\Psi^n\|_{H^2}+\tau h^{m_0}\nonumber\\
&\lesssim\tau\|e_{\psi}^n\|_{H^2}+\tau\|e_{\phi}^n\|_{H^2}+\tau h^{m_0},
\end{align}
where the operator $\partial_{xx}^S$ on a function $f(x)$ is defined as
\begin{align*}
\partial_{xx}^Sf(x):
=-\sum_{l\in\mathbb{Z}}\frac{\sin(\mu_l^2\tau)}{\tau}\,\widehat{f}_l\;
\fe^{i\mu_l(x-a)}.
\end{align*}
Combining (\ref{etaphin78}) and (\ref{ephin896}), we get
\begin{align}
\|\eta_\phi^n\|_{H^2}
&\lesssim\tau\|e_{\psi,N}^n\|_{H^2}+\tau\|e_{\phi,N}^n\|_{H^2}+\tau h^{m_0}.\label{lm: nonlinear xiphi}
\end{align}
Similarly, we can obtain
\begin{align}\label{lm: nonlinear xidphi}
\eps^2\|\dot{\eta}_\phi^n\|_{H^2}^2+\|\partial_x\eta_\phi^n\|_{H^2}^2
&\lesssim\frac{\tau^2}{\eps^2}\left[\|e_{\psi,N}^n\|_{H^2}^2+\|e_{\phi,N}^n\|_{H^2}^2+h^{2m_0}\right].
\end{align}
Similarly, we can estimate all the terms of
$\eta^n_\psi$ in (\ref{eta l def c}) except the last term.
For the last term, we have
\begin{align}
&\sum_{l=-N/2}^{N/2-1}(1+\mu_l^2+\mu_l^4)\left|\widehat{(\gamma^{n+1}\psi^n)}_l(0)
-\widetilde{(R^{n+1}\Psi^n)}_l\right|^2\nonumber\\
&\lesssim\|I_N(\gamma^{n+1}\psi^n(\cdot,0))-I_N(R^{n+1}\psi^n))\|_{H^2}^2+h^{2m_0}\nonumber\\
&\lesssim\|e_\psi^n\|_{H^2}^2+\|\gamma^{n+1}-I_NR^{n+1}\|_{H^2}^2+h^{2m_0}.\label{lm: nonlinear eq1}
\end{align}
Noting (\ref{gamma est}) and (\ref{sine psu coeff r}), we get
\begin{align}
\left|\widehat{(\gamma^{n+1})}_l-\widetilde{(R^{n+1})}_l\right|
\lesssim&\left|\widehat{(z^n)}_l(0)-\widetilde{(Z^{0})}_l\right|+|p_l(\tau)|\cdot\left|\widehat{(|\psi^n|^2)}_l(0)
-\widetilde{(|\Psi^{n}|^2)}_l\right|\nonumber\\
&+|q_l(\tau)|\cdot\left|\widehat{(\overline{\psi^n}\varphi^n)}_l(0)-
\widetilde{(\overline{\Psi^{n}}\dot{\Psi}^{n})}_l\right|.
\end{align}
Combining the above two inequalities, we obtain
\begin{align}\label{lm: nonlinear eq2}
\|\gamma^{n+1}-I_NR^{n+1}\|_{H^2}\lesssim\|e_\phi^n\|_{H^2}+\eps^2
\|\dot{e}_\phi^n\|_{H^2}+\|e_\psi^n\|_{H^2}+h^{2m_0}.
\end{align}
Plugging (\ref{lm: nonlinear eq2}) into (\ref{lm: nonlinear eq1}), we get
\begin{align}\label{gm3456}
&\sum_{l=-N/2}^{N/2-1}(1+\mu_l^2+\mu_l^4)\left|\widehat{(\gamma^{n+1}\psi^n)}_l(0)
-\widetilde{(R^{n+1}\Psi^n)}_l\right|^2\nonumber\\
&\lesssim\|e_\psi^n\|_{H^2}^2+\|e_\phi^n\|_{H^2}^2+\eps^4\|\dot{e}_\phi^n\|_{H^2}^2+h^{2m_0}.
\end{align}
Similarly, we can obtain
\begin{align}\label{gm3458}
&\sum_{l=-N/2+1}^{N/2}(1+\mu_l^2+\mu_l^4)\left|\widehat{(\gamma^{n+1}\varphi^n)}_l
-\widetilde{(R^{n+1}\dot{\Psi}^n)}_l\right|^2\nonumber\\
&\lesssim\|e_\psi^n\|_{H^2}^2+\|e_\phi^n\|_{H^2}^2+\eps^4\|\dot{e}_\phi^n\|_{H^2}^2+h^{2m_0}.
\end{align}
Combining (\ref{gm3456}), (\ref{gm3458}) and (\ref{eta l def c}),  we have
\begin{align}\label{lm: nonlinear xipsi}
\|\eta_\psi^n\|_{H^2}&\lesssim\tau\left(\|e_\phi^n\|_{H^2}+
\eps^2\|\dot{e}_\phi^n\|_{H^2}+\|e_\psi^n\|_{H^2}+h^{m_0}\right)\nonumber\\
&\lesssim\tau\left(\|e_{\phi,N}^n\|_{H^2}+\eps^2\|\dot{e}_{\phi,N}^n\|_{H^2}+\|e_{\psi,N}^n\|_{H^2}+h^{m_0}\right).
\end{align}
Plugging (\ref{lm: nonlinear xiphi}), (\ref{lm: nonlinear xidphi}) and (\ref{lm: nonlinear xipsi})
into (\ref{error engery}), we get the estimate (\ref{lm: nonlinear eq}).
\qed
\end{proof}

\medskip

{\it Proof of Theorem \ref{main thm}.} The proof will be proceeded by the
energy method and the method of mathematical induction \cite{BDZ,Cai1,Zhao,Cai2,Dong}.
For $n=0$, from the initial data in the MTI-FP method (\ref{MTI-FP S})-(\ref{MTI-FP E})
 and noticing the assumption (\ref{assumption}), we have
\begin{align*}
&\|e_\phi^0\|_{H^2}+\|e_\psi^0\|_{H^2}+\eps^2\|\dot{e}_\phi^0\|_{H^2}\\
&=\|\phi_0-I_N\phi_0\|_{H^2}+\|\psi_0-I_N\psi_0\|_{H^2}+\|\phi_1-I_N\phi_1\|_{H^2}
\lesssim h^{m_0+2}\lesssim h^{m_0}.
\end{align*}
In addition, using the triangle inequality, we know that there exists $h_1>0$ independent of $\eps$ such that
for $0<h\leq h_1$
\begin{align*}
&\|\phi_I^0\|_{H^2}\leq\|\phi_0\|_{H^2}+\|e_\phi^0\|_{H^2}\leq C_\phi+1,\quad \|\dot{\phi}_I^0\|_{H^2}\leq\frac{\|\phi_1\|_{H^2}}{\eps^2}+\|\dot{e}_\phi^0\|_{H^2}\leq\frac{C_\phi+1}{\eps^2},\\
&\|\psi_I^0\|_{H^2}\leq\|\psi_0\|_{H^2}+\|e_\psi^0\|_{H^2}\leq C_\psi+1.
\end{align*}
Thus (\ref{MTI error bound1})-(\ref{MTI sol bound}) are valid for $n=0$.
Now  we assume that (\ref{MTI error bound1})-(\ref{MTI sol bound}) are valid for $0\leq n\leq m-1\leq T/\tau-1$.
Adding (\ref{loc error}) and (\ref{observ of scheme}), we have
\begin{subequations}\label{lm: error prog eq1}
\begin{align}
&\widehat{(e_\phi^{n+1})}_l=
\cos(\omega_l\tau)\widehat{(e_\phi^{n})}_l+
\frac{\sin(\omega_l\tau)}{\omega_l}\widehat{(\dot{e}_\phi^{n})}_l+\widehat{(\xi_\phi^n)}_l+\widehat{(\eta_\phi^n)}_l,\\
&\widehat{(\dot{e}_\phi^{n+1})}_l=
-\omega_l\sin(\omega_l\tau)\widehat{(e_\phi^{n})}_l+\cos(\omega_l\tau)\widehat{(\dot{e}_\phi^{n})}_l
+\widehat{(\dot{\xi}_\phi^n)}_l+\widehat{(\dot{\eta}_\phi^n)}_l,\\
&\widehat{(e_\psi^{n+1})}_l=\fe^{-i\mu_l^2\tau}\widehat{(e_\psi^{n})}_l+\widehat{(\xi_\psi^n)}_l+\widehat{(\eta_\psi^n)}_l.
\end{align}
\end{subequations}
Using the Cauchy's inequality, we obtain
\begin{subequations}
\begin{align}
&\left|\widehat{(e_\phi^{n+1})}_l\right|^2\leq(1+\tau)\left|\cos(\omega_l\tau)\widehat{(e_\phi^{n})}_l
+\frac{\sin(\omega_l\tau)}{\omega_l}\widehat{(\dot{e}_\phi^{n})}_l\right|^2+\frac{1+\tau}
{\tau}\left|\widehat{(\xi_\phi^n)}_l+\widehat{(\eta_\phi^n)}_l\right|^2,
\label{lm:error prog eq2}\\
&\left|\widehat{(\dot{e}_\phi^{n+1})}_l\right|^2\leq(1+\tau)\left|\cos(\omega_l\tau)\widehat{(\dot{e}_\phi^{n})}_l
-\omega_l\sin(\omega_l\tau)\widehat{(e_\phi^{n})}_l\right|^2
+\frac{1+\tau}{\tau}\left|\widehat{(\dot{\xi}_\phi^n)}_l+\widehat{(\dot{\eta}_\phi^n)}_l\right|^2,\label{lm:error prog eq3}\\
&\left|\widehat{(e_\psi^{n+1})}_l\right|^2\leq(1+\tau)\left|\widehat{(e_\psi^{n})}_l\right|^2+\frac{1+\tau}
{\tau}\left|\widehat{(\xi_\psi^n)}_l+\widehat{(\eta_\psi^n)}_l\right|^2.\label{lm:error prog eqpsi}
\end{align}
\end{subequations}
Multiplying (\ref{lm:error prog eq2})-(\ref{lm:error prog eqpsi}) by
 $(\mu_l^2+\frac{1}{\eps^2})(1+\mu_l^2+\mu_l^4)$,
$\eps^2(1+\mu_l^2+\mu_l^4)$ and $\frac{1}{\eps^2}(1+\mu_l^2+\mu_l^4)$, respectively, then adding them together
and summing up for $l=-N/2,\\ \ldots, N/2-1$,
we obtain
\[\mathcal{E}(e^{n+1}_{\phi,N},e^{n+1}_{\psi,N},\dot{e}^{n+1}_{\phi,N})\leq (1+\tau)\mathcal{E}(e^{n}_{\phi,N},e^{n}_{\psi,N},\dot{e}^{n}_N)+\frac{1+\tau}{\tau}
\mathcal{E}(\xi_\phi^n+\eta_\phi^n,\xi_\psi^n+\eta_\psi^n,\dot{\xi}_\phi^n+\dot{\eta}_\phi^n).\]
Using the Cauchy's inequality, we get
\begin{align}
&\mathcal{E}(e^{n+1}_{\phi,N},e^{n+1}_{\psi,N},\dot{e}^{n+1}_{\phi,N})
-\mathcal{E}(e^{n}_{\phi,N},e^{n}_{\psi,N},\dot{e}^{n}_{\phi,N})\nonumber\\
&\lesssim\tau\mathcal{E}(e^{n}_{\phi,N},e^{n}_{\psi,N},\dot{e}^{n}_{\phi,N})
+\frac{1+\tau}{\tau}\left[\mathcal{E}(\xi_\phi^n,\xi_\psi^n,\dot{\xi}_\phi^n)
+\mathcal{E}(\eta_\phi^n,\eta_\psi^n,\dot{\eta}_\phi^n)\right].\label{proof: eq1}
\end{align}
Inserting  (\ref{lm: nonlinear eq}) and the second inequality in (\ref{lm:local_error_L result})
 into (\ref{proof: eq1}), we get
\bea\small
\mathcal{E}(e^{n+1}_{\phi,N},e^{n+1}_{\psi,N},\dot{e}^{n+1}_{\phi,N})\hspace{-0.5mm}
-\hspace{-0.5mm}\mathcal{E}(e^{n}_{\phi,N},e^{n}_{\psi,N},\dot{e}^{n}_{\phi,N})
\hspace{-0.5mm}\lesssim\hspace{-0.5mm}\tau\mathcal{E}(e^{n}_{\phi,N},e^{n}_{\psi,N},\dot{e}^{n}_{\phi,N})
+\frac{\tau h^{2m_0}}{\eps^2}+\frac{\tau^5}{\eps^6}.\qquad\quad
\eea
Summing the above inequality for $0\le n\le m-1$ and then applying the discrete Gronwall's inequality, we have
\begin{equation}\label{emn865}
\mathcal{E}(e^{n}_{\phi,N},e^{n}_{\psi,N},\dot{e}^{n}_{\phi,N})\lesssim\frac{h^{2m_0}}{\eps^2}+\frac{\tau^4}{\eps^6}.
\end{equation}
Similarly, by using the first inequality in  (\ref{lm:local_error_L result}), we obtain
\begin{equation}\label{emn835}
\mathcal{E}(e^{n}_{\phi,N},e^{n}_{\psi,N},\dot{e}^{n}_{\phi,N})\lesssim \frac{h^{2m_0}}{\eps^2}+\eps^2.
\end{equation}
Combining (\ref{error engery}), (\ref{emn865}) and (\ref{emn835}),
we get that (\ref{MTI error bound1}) is valid for $n=m$, which implies
\[\|e_\phi^m\|_{H^2}+\|e_\psi^m\|_{H^2}+\eps^2\|\dot{e}_\phi^m\|_{H^2}\leq h^{m_0}+\tau.\]
Using the triangle inequality, we obtain that these exist $h_2>0$ and $\tau_2>0$ independent of
$\eps$ such that
\[\begin{split}
&\|\phi_I^m\|_{H^2}\leq \|\phi(\cdot,t_m)\|_{H^2}+\|e_\phi^m\|_{H^2}\leq C_\phi+1,\\
&\|\psi_I^m\|_{H^2}\leq \|\psi(\cdot,t_m)\|_{H^2}+\|e_\psi^m\|_{H^2}\leq C_\psi+1,
\qquad 0<h\le h_2,\quad  0<\tau\le \tau_2,\\
&\|\dot{\phi}_I^m\|_{H^2}\leq \|\partial_t\phi(\cdot,t_m)\|_{H^2}+\|\dot{e}_\phi^m\|_{H^2}
\leq \frac{C_\phi+1}{\eps^2}.
\end{split}
\]
Thus (\ref{MTI sol bound}) is also valid for $n=m$. Then the proof is completed
by choosing $\tau_0=\min\{\tau_1,\tau_2\}$ and $h_0=\min\{h_1,h_2\}$.
\qed

%

\begin{remark}
Here we emphasize that Theorem \ref{main thm} holds in two/three
dimensions (2D/3D) and the above approach can be directly
extended to 2D/3D
without extra effort.
The only thing needs to be taken care of is the Sobolev inequality used
in Lemma \ref{lm: nonlinear} in 2D/3D
\begin{equation}
\|u\|_{L^\infty(\Omega)}\lesssim\|u\|_{H^2(\Omega)},\qquad
\|u\|_{W^{1,p}(\Omega)}\lesssim \|u\|_{H^2(\Omega)},\quad 1<p<6,
\end{equation}
where $\Omega$ is a bounded domain in 2D/3D.  By using the assumption
\eqref{MTI sol bound}, Lemma \ref{lm: nonlinear} still holds in 2D/3D.
Thus \eqref{MTI sol bound} and the error bounds can be proved by
the method of mathematical induction since our scheme
is explicit.
\end{remark}

\begin{remark}
Under a weaker assumption on the regularity of the solution, i.e.
there exists an integer $m_0\ge 4$
such that $\phi\in C^1([0,T];H_p^{m_0+3}(\Omega))$, $\psi\in C([0,T]; H_p^{m_0+1}(\Omega))\\ \cap C^1([0,T];H_p^{m_0-1}(\Omega))
\cap C^2([0,T];H_p^{m_0-3}(\Omega))$ and
\be\label{assumption23}
\begin{split}
&\left\|\phi\right\|_{L^\infty([0,T]; H^{m_0+3})}+\eps^2 \left\|\partial_t\phi\right\|_{L^\infty([0,T]; H^{m_0+3})}\lesssim 1,\\
&\left\|\psi\right\|_{L^\infty([0,T]; H^{m_0+1})}+\left\|\partial_t\psi\right\|_{L^\infty([0,T]; H^{m_0-1})}+
\eps^2\left\|\partial_{tt}\psi\right\|_{L^\infty([0,T]; H^{m_0-3})}\lesssim 1,
\end{split}
\ee
we can establish an $H^1$-norm estimate of the MTI-FP method in
Theorem \ref{main thm} by a very similar proof with all the $H^2$-norm in the proof being changed into $H^1$-norm.
In 1D case, the $H^1$-norm estimate of the MTI-FP method holds without any stability (or CFL) condition.
However in 2D/3D, due to the use of the inverse inequality to provide the bound in $l^\infty$-norm
of the numerical solution \cite{Cai1,BZ,Cai2},
we need impose the technical condition
\begin{equation}\label{stab987}
\tau\lesssim \rho_d(h),\qquad \hbox{with} \quad \rho_d(h)=
\left\{\begin{array}{ll}   1/|\ln h|,\quad &d=2,\\ \sqrt{h},\quad &d=3.\\
\end{array}\right.
\end{equation}
Of course, if the solution of the KGS is smooth enough, we can always
adapt the $H^2$-norm estimate in Theorem \ref{main thm} and thus there is
no need to assume the stability
(or CFL) condition (\ref{stab987}).
\end{remark}

\section{Numerical results}\label{sec: result}
In this section, we report numerical results
to demonstrate the uniform convergence of the MTI-FP method for
$\eps\in(0,1]$ and apply it to numerically study convergence rates of the KGS equations to its limiting
models (\ref{modellimit92}) and (\ref{modellimit4}) in the nonrelativistic limit regime. In order to do so,
we take $d=1$ and $\mu=\lambda=1$ in (\ref{KGS}) and choose the initial data in (\ref{KGS ini})
with $d=1$ as
\begin{align}\label{numer example}
\psi_0(x)=\frac{1+i}{2}\sech\left(\frac{x^2}{2}\right),\quad \phi_0(x)=\frac{1}{2}\fe^{-x^2},\quad
\phi_1(x)=\frac{1}{\sqrt{2}}\fe^{-x^2},\quad x\in{\mathbb R}.\end{align}

\subsection{Accuracy test}
The problem (\ref{KGS}) with $d=1$ and (\ref{numer example})
is solved on a bounded interval $\Omega=[-32,32]$, i.e. $b=-a=32$,
which is large enough to guarantee that the periodic boundary condition does not
 introduce a significant aliasing error relative to the original problem.
To quantify the error, we introduce two error functions:
\begin{align*}
&\fe^{\tau,h}_{\phi,\eps}(t=t_n):=\left\|\phi(\cdot,t_n)-\phi^n_I\right\|_{H^2},\quad
\fe^{\tau,h}_{\phi,\infty}(t):=\max_{0<\eps\le1}\left\{\fe^{\tau,h}_{\phi,\eps}(t)\right\},\\
&\fe^{\tau,h}_{\psi,\eps}(t=t_n):=\left\|\psi(\cdot,t_n)-\psi^n_I\right\|_{H^2},\quad
\fe^{\tau,h}_{\psi,\infty}(t):=\max_{0<\eps\le1}\left\{\fe^{\tau,h}_{\psi,\eps}(t)\right\}.
\end{align*}
Since the analytical solution to this problem is not available,
so the `reference' solution here is obtained numerically by the MTI-FP method
(\ref{MTI-FP S})-(\ref{MTI-FP E}) with very fine mesh $h=1/32$ and
time step $\tau=5\times 10^{-6}$.
Tab. \ref{tb: MTIFP spa psi} and Tab. \ref{tb: MTIFP spa phi}
show the spatial error of the MTI-FP method for $\psi$
and $\phi$, respectively, at $t=1$
under different $\eps$ and $h$ with a very small time step $\tau=5\times 10^{-6}$
such that the discretization error in time is negligible.
Tab. \ref{tb: MTIFP tem psi} and Tab. \ref{tb: MTIFP tem phi} show the
temporal error of the MTI-FP method for $\psi$
and $\phi$, respectively, at $t=1$ under different $\eps$ and $\tau$
with a small mesh size $h=1/16$ such that
the discretization error in space is negligible.

\begin{table}[t!]
\tabcolsep 0pt
\caption{Spatial error analysis: $\fe^{\tau,h}_{\psi,\eps}(t=1)$ with  $\tau=5\times 10^{-6}$
for different $\eps$ and $h$.}\label{tb: MTIFP spa psi}
\begin{center}
\begin{tabular*}{1\textwidth}{@{\extracolsep{\fill}}llllll}
\hline
$\fe^{\tau,h}_{\psi,\eps}(t=1)$         & $h_0=1$	      &$h_0/2$	     
   &$h_0/4$	        &$h_0/8$	&$h_0/16$\\
\hline
$\eps_0=1$	                   &6.90E-1	&1.39E-1	&1.70E-3	&2.44E-7       &8.96E-11\\
$\eps_0/2$	                   &7.57E-1	&1.40E-1	&1.70E-3	&2.40E-7       &8.37E-11\\
$\eps_0/2^2$	               &6.37E-1	&1.45E-1	&1.70E-3	&2.40E-7       &9.19E-11\\
$\eps_0/2^3$	               &6.46E-1	&1.30E-1	&1.90E-3	&2.40E-7       &8.89E-11\\
$\eps_0/2^4$	               &6.47E-1	&1.31E-1	&1.80E-3	&2.38E-7       &9.26E-11\\
$\eps_0/2^5$	               &6.47E-1	&1.31E-1	&1.80E-3	&2.38E-7       &9.66E-11\\
$\eps_0/2^6$	               &6.46E-1	&1.27E-1	&1.70E-3	&2.52E-7       &8.76E-11\\
$\eps_0/2^7$	               &6.46E-1	&1.31E-1	&1.80E-3	&2.38E-7       &9.08E-11\\
$\eps_0/2^8$	               &6.46E-1	&1.27E-1	&1.70E-3	&2.52E-7       &9.08E-11\\
$\eps_0/2^{9}$	               &6.46E-1	&1.31E-1	&1.80E-3	&2.38E-7       &1.02E-10\\
$\eps_0/2^{11}$	               &6.46E-1	&1.31E-1	&1.80E-3	&2.38E-7       &9.07E-11\\
$\eps_0/2^{13}$	               &6.46E-1	&1.31E-1	&1.80E-3	&2.38E-7       &8.61E-11\\
\hline
\end{tabular*}
\end{center}
\end{table}

\begin{table}[h!]
\tabcolsep 0pt
\caption{Spatial error analysis: $\fe^{\tau,h}_{\phi,\eps}(t=1)$ with  $\tau=5\times 10^{-6}$
for different $\eps$ and $h$.}\label{tb: MTIFP spa phi}
\begin{center}
\begin{tabular*}{1\textwidth}{@{\extracolsep{\fill}}llllll}
\hline
$\fe^{\tau,h}_{\phi,\eps}(t=1)$         & $h_0=1$	      &$h_0/2$	        &$h_0/4$	        &$h_0/8$	&$h_0/16$\\
\hline
$\eps_0=1$	                   &4.74E-2	&1.60E-3	&3.64E-6	&3.57E-10       &9.23E-11\\
$\eps_0/2$	                   &8.29E-2	&1.90E-3	&1.23E-5	&3.92E-10       &6.18E-11\\
$\eps_0/2^2$	               &2.32E-1	&1.12E-2	&4.61E-5	&1.46E-\,9      &5.24E-11\\
$\eps_0/2^3$	               &3.49E-1	&1.30E-3	&9.31E-5	&5.33E-10       &4.58E-11\\
$\eps_0/2^4$	               &2.64E-1	&1.20E-3	&3.35E-5	&5.16E-\,9      &4.49E-11\\
$\eps_0/2^5$	               &2.71E-1	&2.09E-4	&6.92E-6	&8.76E-10       &5.49E-11\\
$\eps_0/2^6$	               &1.13E-1	&5.27E-4	&2.56E-6	&1.73E-10       &6.04E-11\\
$\eps_0/2^7$	               &3.99E-1	&4.29E-4	&6.28E-7	&4.23E-11       &6.20E-11\\
$\eps_0/2^8$	               &6.74E-2	&5.90E-4	&7.20E-8	&2.38E-11       &6.20E-11\\
$\eps_0/2^{9}$	               &3.37E-1	&5.13E-5	&2.78E-8	&1.86E-11       &6.51E-11\\
$\eps_0/2^{11}$	               &3.09E-1	&7.75E-4	&2.98E-9	&1.73E-11       &6.19E-11\\
$\eps_0/2^{13}$	               &1.87E-1	&3.64E-4	&3.33E-9	&1.72E-11       &6.13E-11\\
\hline
\end{tabular*}
\end{center}
\end{table}

\begin{table}[t!]
\tabcolsep 0pt
\caption{Temporal error analysis: $\fe^{\tau,h}_{\psi,\eps}(t=1)$ and
$\fe^{\tau,h}_{\psi,\infty}(t=1)$ with $h=1/16$ for different $\eps$ and $\tau$.}\label{tb: MTIFP tem psi}
\begin{center}
\begin{tabular*}{1\textwidth}{@{\extracolsep{\fill}}llllllll}
\hline
$\fe^{\tau,h}_{\psi,\eps}(t=1)$      & $\tau_0=0.2$	&$\tau_0/2^2$	
&$\tau_0/2^4$	&$\tau_0/2^6$	 &$\tau_0/2^8$  & $\tau_0/2^{10}$  &$\tau_0/2^{12}$\\
\hline
$\eps_0=1$	            &1.31E-1	&1.60E-2	&1.00E-3	&6.20E-5	&3.84E-6	&2.35E-7	&1.77E-8\\
rate                    &---	    &1.52	    &2.00	    &2.01	    &2.01	    &2.02	    &1.87\\ \hline
$\eps_0/2$	        &1.59E-1	&2.05E-2	&1.30E-3	&7.70E-5	&4.76E-6	&2.92E-7	&1.37E-8\\
rate                    &---	    &1.48	    &1.99	    &2.04	    &2.01	    &2.01	    &2.20\\ \hline
$\eps_0/2^2$	        &1.20E-1	&1.48E-2	&1.20E-3	&7.65E-5	&4.79E-6	&2.95E-7	&1.38E-8\\
rate                    &---	    &1.35	    &1.81	    &1.99	    &2.00	    &2.01	    &2.20\\ \hline
$\eps_0/2^3$	        &2.23E-2	&5.30E-3	&5.19E-4	&3.20E-5	&2.00E-6	&1.23E-7	&5.77E-9\\
rate                    &---	    &1.04	    &1.68	    &2.00	    &2.00	    &2.01	    &2.20\\ \hline
$\eps_0/2^4$	        &9.50E-3	&3.50E-3	&2.20E-3	&1.32E-4	&8.06E-6	&4.94E-7	&2.31E-8\\
rate                    &---	    &0.72	    &0.33	    &2.03	    &2.02	    &2.01	    &2.20\\ \hline
$\eps_0/2^5$	        &1.40E-3	&1.00E-3	&6.34E-4	&4.67E-4	&3.35E-5	&2.02E-6	&9.41E-8\\
rate                    &---	    &0.24	    &0.33	    &0.22	    &1.91	    &2.02	    &2.20\\ \hline
$\eps_0/2^6$	        &3.82E-4	&5.65E-5	&1.43E-4	&1.51E-4	&1.16E-4	&8.29E-6	&3.80E-7\\
rate                    &---	    &1.38	    &-0.67	    &-0.18   	&0.19	    &1.90	    &2.22\\ \hline
$\eps_0/2^7$	        &5.64E-5	&2.30E-5	&1.28E-5	&3.61E-5	&3.83E-5	&2.89E-5	&1.58E-6\\
rate                    &---	    &0.65	    &0.42	    &-0.75	    &-0.04	    &0.20	    &2.10\\ \hline
$\eps_0/2^8$	        &2.10E-5	&5.60E-6	&6.41E-6	&2.61E-6	&1.10E-5	&1.12E-5	&5.70E-6\\
rate                    &---	    &0.95	    &-0.10	    &0.65	    &-1.00	    &-0.01	    &0.49\\ \hline
$\eps_0/2^{9}$	        &5.82E-6	&4.05E-6	&2.40E-6	&2.00E-6	&2.41E-6	&8.49E-7	&2.50E-7\\
rate                    &---	    &0.26	    &0.38	    &0.13	    &-0.13	    &0.75	    &0.88\\ \hline
$\eps_0/2^{11}$	        &2.09E-7	&2.19E-7	&1.67E-7	&1.58E-7	&8.48E-8	&1.43E-7	&4.63E-8\\
rate                    &---	    &-0.04	    &0.20	    &0.04	    &0.45	    &-0.38	    &0.81\\ \hline
$\eps_0/2^{13}$	        &1.78E-8	&8.93E-9	&5.82E-9	&5.68E-9	&7.48E-9	&1.92E-9	&9.46E-9\\
rate                    &---	    &0.50	    &0.31	    &0.02	    &-0.20	    &0.98	    &-1.15\\ \hline \hline
$\fe^{\tau,h}_{\psi,\infty}(t=1)$  &1.59E-1	&2.05E-2	&2.20E-3	&4.67E-4	&1.16E-4	&2.89E-5	&5.70E-6\\
rate                    &---	 &1.48	    &1.61	    &1.11	    &1.00	    &1.00	    &1.17\\
\hline
\end{tabular*}
\end{center}
\end{table}

\begin{table}[t!]
\tabcolsep 0pt
\caption{Temporal error analysis: $\fe^{\tau,h}_{\phi,\eps}(t=1)$ and
$\fe^{\tau,h}_{\phi,\infty}(t=1)$ with $h=1/16$ for different $\eps$ and $\tau$.}\label{tb: MTIFP tem phi}
\begin{center}
\begin{tabular*}{1\textwidth}{@{\extracolsep{\fill}}llllllll}
\hline
$\fe^{\tau,h}_{\phi,\eps}(t=1)$      & $\tau_0=0.2$	&$\tau_0/2^2$	&$\tau_0/2^4$	&$\tau_0/2^6$	 &$\tau_0/2^8$  & $\tau_0/2^{10}$  &$\tau_0/2^{12}$\\
\hline
$\eps_0=1$	            &3.74E-2	&1.50E-3	&3.82E-5	&1.78E-6	&1.09E-7	&6.73E-9	&3.32E-10\\
rate                    &---	    &2.32	    &2.65	    &2.21	    &2.01	    &2.01       &2.17\\ \hline
$\eps_0/2$	        &9.32E-2	&4.80E-3	&2.29E-4	&1.35E-5	&8.32E-7	&5.11E-8	&2.39E-\,9\\
rate                    &---	    &2.14	    &2.19	    &2.04	    &2.01	    &2.01	    &2.20\\ \hline
$\eps_0/2^2$	        &9.65E-2	&1.26E-2	&7.09E-4	&4.23E-5	&2.61E-6	&1.60E-7	&7.51E-\,9\\
rate                    &---	    &1.47	    &2.07	    &2.03	    &2.01	    &2.01	    &2.20\\ \hline
$\eps_0/2^3$	        &2.33E-2	&4.10E-3	&3.02E-4	&1.95E-5	&1.21E-6	&7.47E-8	&3.50E-\,9\\
rate                    &---	    &1.25	    &1.88	    &1.98	    &2.00	    &2.01	    &2.20\\ \hline
$\eps_0/2^4$	        &8.70E-3	&3.30E-3	&9.09E-5	&6.00E-6	&3.82E-7	&2.36E-8	&1.11E-\,9\\
rate                    &---	    &0.70	    &2.59	    &1.96	    &1.99	    &2.00	    &2.20\\ \hline
$\eps_0/2^5$	        &9.59E-4	&8.29E-4	&2.42E-4	&3.73E-6	&2.89E-7	&1.81E-8	&8.53E-10\\
rate                    &---	    &0.11	    &0.89	    &3.01	    &1.85	    &2.00       &2.20\\ \hline
$\eps_0/2^6$	        &4.71E-4	&1.19E-4	&1.33E-5	&3.80E-6	&2.13E-7	&1.67E-8	&8.01E-10\\
rate                    &---	    &0.99	    &1.58	    &0.90	    &2.07	    &1.84	    &2.19\\ \hline
$\eps_0/2^7$	        &8.27E-5	&2.88E-5	&2.49E-6	&1.76E-7	&4.95E-8	&1.30E-8	&7.86E-10\\
rate                    &---	    &0.76	    &1.76	    &1.91	    &0.92	    &0.96	    &2.02\\ \hline
$\eps_0/2^8$	        &3.33E-5	&5.69E-6	&3.28E-7	&2.18E-8	&3.37E-9	&1.70E-9	&6.57E-10\\
rate                    &---	    &1.28	    &2.06	    &1.95	    &1.35	    &0.49	    &0.69\\ \hline
$\eps_0/2^{9}$	        &8.92E-6	&3.98E-6	&1.04E-7	&5.46E-9	&3.66E-10	&5.75E-11	&2.04E-11\\
rate                    &---	    &0.58	    &2.63	    &2.12	    &1.95	    &1.34	    &0.75\\ \hline
$\eps_0/2^{11}$	        &3.05E-7	&1.59E-7	&5.64E-9	&1.32E-9	&7.21E-11	&7.09E-11	&7.43E-11\\
rate                    &---	    &0.47	    &2.41	    &1.05	    &2.10	    &0.01	    &-0.03\\ \hline
$\eps_0/2^{13}$	        &2.30E-8	&7.26E-9	&2.21E-9	&7.77E-10	&7.74E-10	&7.74E-10	&7.74E-10\\
rate                    &---	    &0.83	    &0.86	    &0.76	    &0.01	    &0.00	    &0.00\\ \hline \hline
$\fe^{\tau,h}_{\phi,\infty}(t=1)$  &9.65E-2	&1.26E-2	&7.09E-4	&4.23E-5	&2.61E-6	&1.60E-7	&7.51E-9\\
rate                    &---	 &1.46	    &2.00	    &2.00	    &2.00	   &2.00	   &2.00\\
\hline
\end{tabular*}
\end{center}
\end{table}

From Tabs. \ref{tb: MTIFP spa psi}-\ref{tb: MTIFP tem phi} and extensive additional results not shown here for brevity,
we can draw the following observations:

(i) The MTI-FP method is spectrally accurate in space, which is uniformly for $0<\eps\le1$
(cf. Tabs. \ref{tb: MTIFP spa psi}\&\ref{tb: MTIFP spa phi}). The approximation in $\phi$
is more accurate than $\psi$.

(ii) The MTI-FP method converges uniformly in time for $\eps\in(0,1]$
with linear/quadratical convergence rate for $\psi$/$\phi$, respectively
 (cf. last row in Tabs. \ref{tb: MTIFP tem psi}\&\ref{tb: MTIFP tem phi}).
In addition, for each fixed $\eps=\eps_0>0$, when $\tau$ is small enough,
it converges quadratically in time (cf. each row in the upper triangle of Tabs.
\ref{tb: MTIFP tem psi}\&\ref{tb: MTIFP tem phi});
for fixed $\tau$, when $0<\eps\ll1$, it converges
quadratically in terms of $\eps$ (cf. each column in the lower
triangle of Tabs. \ref{tb: MTIFP tem psi}\&\ref{tb: MTIFP tem phi}).

(iii) The practical temporal uniform convergence order of the MTI-FP in $\phi$ is better than the theoretical
 error estimate result. In fact, the temporal local truncation error of the MTI-FP in $\phi$ is at $O(\tau^3)$,
but due to the nonlinear coupling with $\psi$, the theoretical error bound drops down to uniform linear convergence rate.
The rigorous mathematical analysis is still on-going.

(iv) In the nonrelativistic limit regime, i.e. $0<\eps\ll1$, the mesh strategy
(or $\eps$-scalability) of the MTI-FP method is  $\tau=O(1)$ and $h=O(1)$.

\subsection{Convergence rates of KGS to its limiting models}
Let $\psi$ and $\phi$ be the solution of
the problem (\ref{KGS}) with $d=\mu=\lambda=1$ and (\ref{numer example}),
which is obtained numerically on a bounded interval $\Omega=[-512,512]$
by the MTI-FP method
(\ref{MTI-FP S})-(\ref{MTI-FP E}) with very fine mesh $h=1/16$ and
time step $\tau=10^{-4}$.
Similarly, let $\psi_{\rm sw}$ and $z_{\rm sw}$ be the solution of
the problem (\ref{modellimit92}) with $d=\mu=\lambda=1$ and (\ref{schw45})
and (\ref{numer example}), which is obtained numerically on
the bounded interval $\Omega$
by the EWI-FP method \cite{Cai2} with very fine mesh $h=1/16$ and
time step $\tau=10^{-4}$; and let $\psi_{\rm s}$ and $z_{\rm s}$ be the solution of
the problem (\ref{modellimit4}) with $d=\mu=\lambda=1$ and (\ref{schint4})
and (\ref{numer example}), which is obtained numerically on
the bounded interval $\Omega$
by the TSFP method \cite{Baoreview} with very fine mesh $h=1/16$ and
time step $\tau=10^{-4}$. Denote
\[
\phi_{\rm sw}(x,t)=\fe^{it/\eps^2}z_{\rm sw}(x,t)+\fe^{-it/\eps^2}\overline{z_{\rm sw}}(x,t),\quad
\phi_{\rm s}(x,t)=\fe^{it/\eps^2}z_{\rm s}(x,t)+\fe^{-it/\eps^2}\overline{z_{\rm s}}(x,t),
\]
and define the error functions as
\be
\begin{split}
&\eta_{\rm sw}(t):=\|\phi(\cdot,t)-\phi_{\rm sw}(\cdot,t)\|_{H^1}+
\|\psi(\cdot,t)-\psi_{\rm sw}(\cdot,t)\|_{H^1},\\
&\eta_{\rm s}(t):=\|\phi(\cdot,t)-\phi_{\rm s}(\cdot,t)\|_{H^1}+
\|\psi(\cdot,t)-\psi_{\rm s}(\cdot,t)\|_{H^1}.
\end{split}
\ee

Fig. \ref{fig:convergence} depicts $\eta_{\rm sw}(t)$ and $\eta_{\rm s}(t)$ for
 different $\eps$. From Fig. \ref{fig:convergence} and additional similar
 results not shown here for brevity, when $\eps\to0$,
we can observe numerically the solution of
the KGS equations (\ref{KGS}) converges quadratically and uniformly in time
to that of its limiting model (\ref{modellimit92}) as
\be
\|\psi(\cdot,t)-\psi_{\rm sw}(\cdot,t)\|_{H^1(\Omega)}+
\|\phi(\cdot,t)-\phi_{\rm sw}(\cdot,t)\|_{H^1(\Omega)}\leq C_0\eps^2,\qquad t\ge0,
\ee
and respectively, to its limiting model (\ref{modellimit4}) as
\be
\|\psi(\cdot,t)-\psi_{\rm s}(\cdot,t)\|_{H^1(\Omega)}+\|\phi(\cdot,t)-\phi_{\rm s}
(\cdot,t)\|_{H^1(\Omega)}\leq (C_1+C_2T)\eps^2, \quad 0\le t\le T,
\ee
where $C_0$, $C_1$  and $ C_2>0$ are positive constants which are
independent of $\eps$ and $T$. Rigorous mathematical justification of
these numerical observations is on-going.

\begin{figure}[t!]
\centerline{\psfig{figure=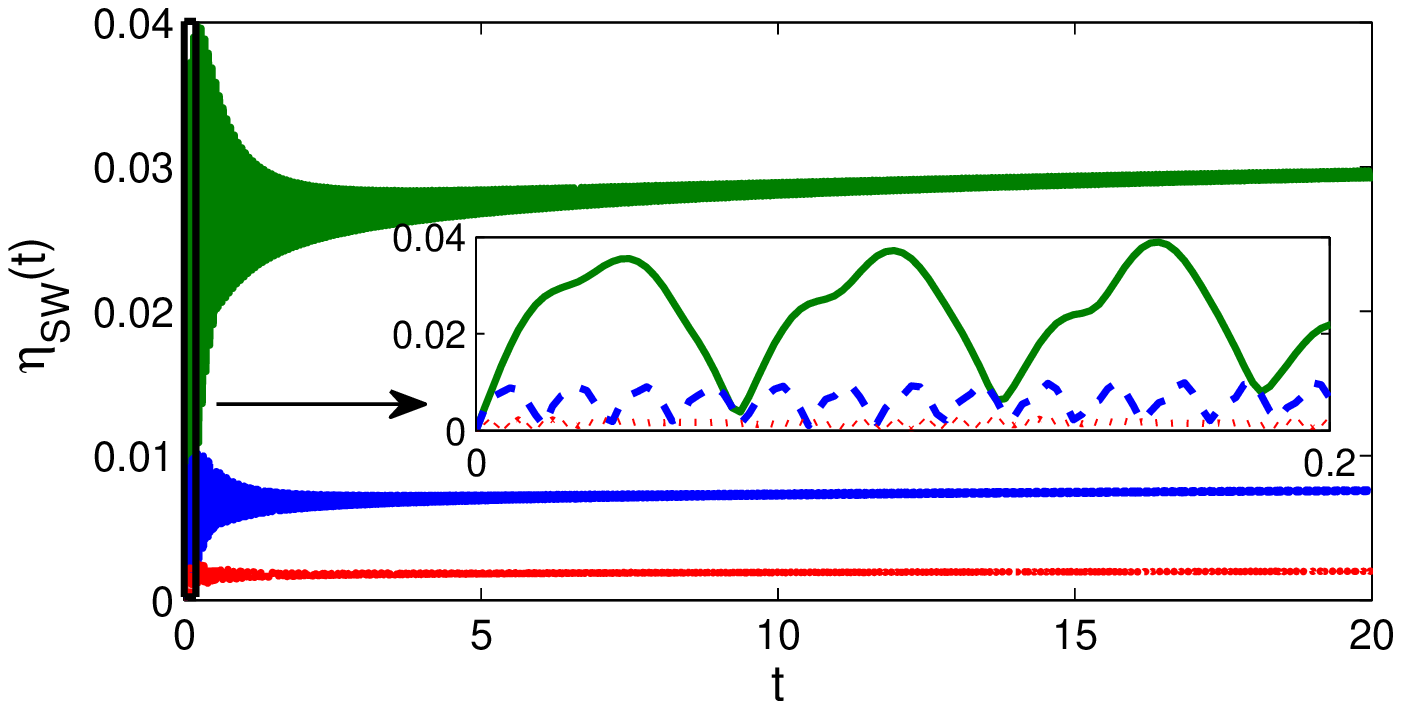,height=4.1cm,width=12cm}}
\centerline{\psfig{figure=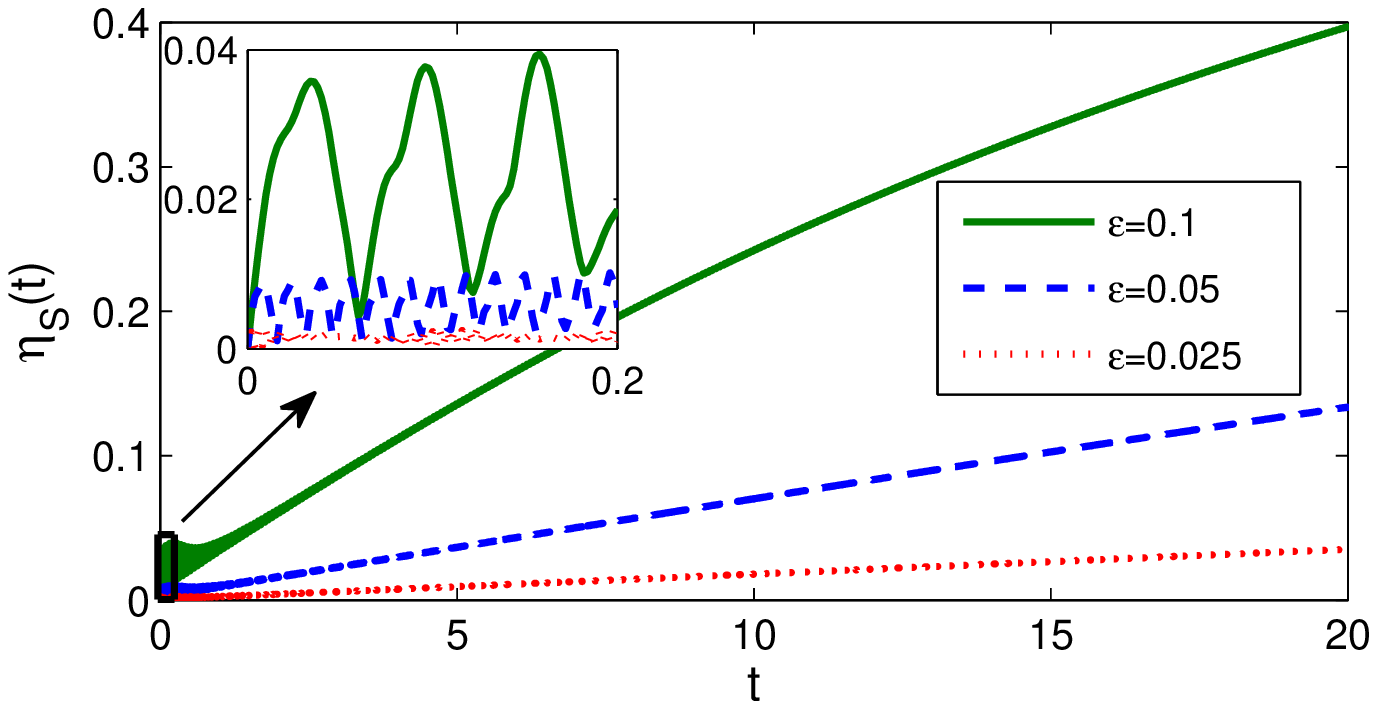,height=4.1cm,width=12cm}}
\centerline{\psfig{figure=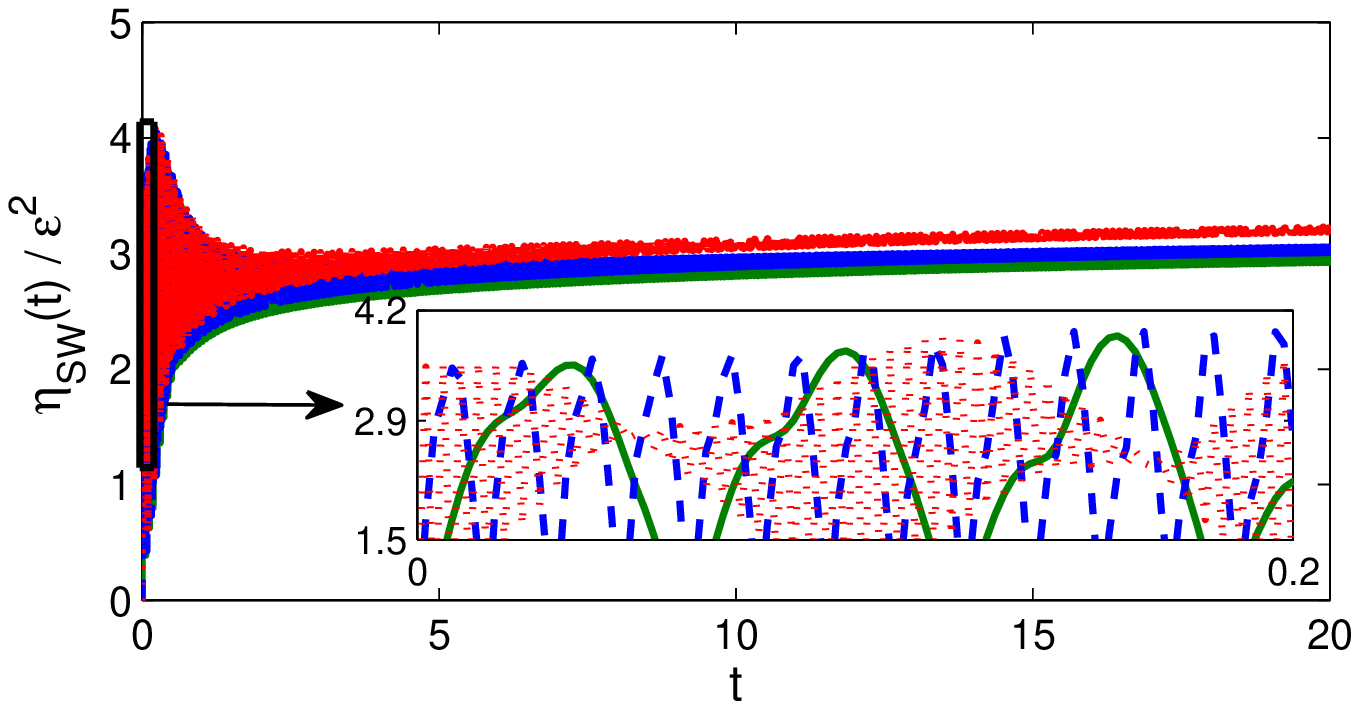,height=4.1cm,width=12cm}}
\centerline{\psfig{figure=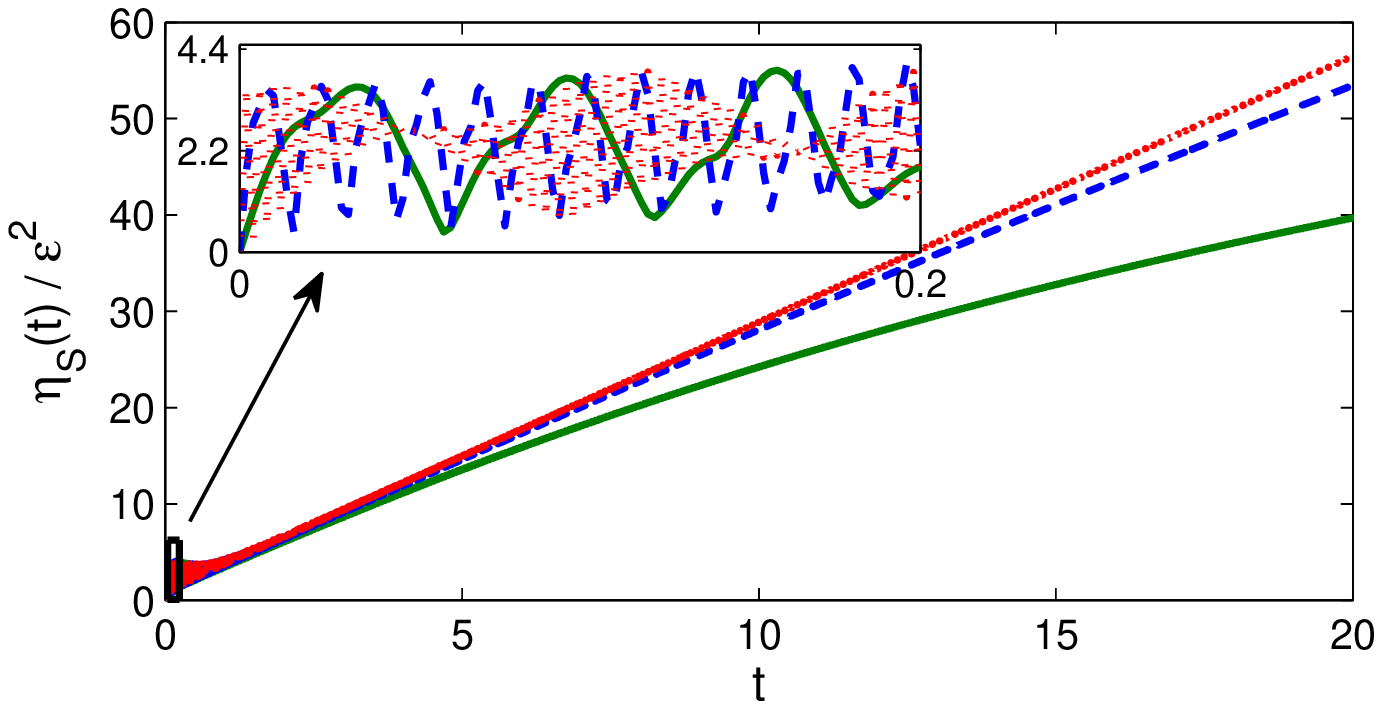,height=4.1cm,width=12cm}}
\caption{Time evolution of  $\eta_{\rm sw}(t)$ and $\eta_{\rm s}(t)$ for different $\eps$.}\label{fig:convergence}
\end{figure}

\section{Conclusion}\label{sec: conclusion}
A multiscale time integrator Fourier pseudospectral (MTI-FP) method
was proposed and analyzed for solving the Klein-Gordon-Schr\"{o}dinger (KGS)
equations with a dimensionless parameter $0<\eps\leq1$ which is inversely
proportional to the speed of light.
The key ideas for designing the MTI-FP method are based on
(i) carrying out a multiscale decomposition by frequency
at each time step with proper choice of transmission conditions between adjacent time intervals,
and (ii) adapting the Fourier spectral method for spatial discretization and
the EWI for integrating second-order highly oscillating ODEs.
Rigorous error bounds for the MTI-FP method were established,
which imply that the MTI-FP method converges uniformly and optimally
in space with spectral convergence rate, and uniformly in time with linear convergence rate
for $\eps\in(0,1]$ and optimally with quadratic convergence rate in the regimes when either $\eps= O(1)$ or $0<\eps\leq\tau$.
Numerical results confirmed these error bounds and showed that the MTI-FP method offers compelling advantages over
 classical methods in the nonrelativistic regime $0<\eps\ll1$.
 Numerical results suggest that the solution of the KGS equations
converges quadratically  to that of its limiting models when
$\eps\to0$.


\bigskip
\noindent {\bf Acknowledgements} This work was supported by
the Ministry of Education of Singapore grant R-146-000-196-112.
Part of this work was done when the authors were visiting the Institute for
Mathematical Science at the National University of Singapore in 2015.


\setcounter{equation}{0}
\renewcommand{\theequation}{A.\arabic{equation}}
\begin{center}
{\bf Appendix A. Explicit formulas for the coefficients used in (\ref{z r app})}
\end{center}
Define the functions
\begin{numcases}
\ p_l(s):=\int_0^s\frac{\sin(\omega_l(s-\theta))}{\eps^2\omega_l}d\theta,
\quad p_l'(s):=\int_0^s\frac{\cos(\omega_l(s-\theta))}{\eps^2}d\theta,\nonumber\\
q_l(s):=\int_0^s\frac{\sin(\omega_l(s-\theta))}{\eps^2\omega_l}\theta d\theta,\quad
q_l'(s):=\int_0^s\frac{\cos(\omega_l(s-\theta))}{\eps^2}\theta d\theta,\label{cldl765}\\
c_l^+(s):=i\fe^{-i\mu_l^2s}\int_0^s\fe^{i\left(\mu_l^2+\frac{1}{\eps^2}\right)\theta}d\theta,\quad
c_l^-(s):=i\fe^{-i\mu_l^2s}\int_0^s\fe^{i\left(\mu_l^2-\frac{1}{\eps^2}\right)\theta}d\theta,\nonumber\\
d_{l}^+(s):=i\fe^{-i\mu_l^2s}\int_0^s\fe^{i\left(\mu_l^2+\frac{1}{\eps^2}\right)\theta}\theta d\theta,\quad
d_l^-(s):=i\fe^{-i\mu_l^2s}\int_0^s\fe^{i\left(\mu_l^2-\frac{1}{\eps^2}\right)\theta}\theta d\theta.\nonumber
\end{numcases}
Taking $s=\tau$ in (\ref{cldl765}),
after a detailed computation, we have
\begin{align*}
&p_l(\tau)=\frac{1-\cos(\omega_l\tau)}{\eps^2\omega_l^2},
\quad p_l'(\tau)=\frac{\sin(\omega_l\tau)}{\eps^2\omega_l},\nonumber\\
&q_l(\tau)=\frac{\tau\omega_l-\sin(\omega_l\tau)}{\eps^2\omega_l^3},\quad
q_l'(\tau)=\frac{1-\cos(\omega_l\tau)}{\eps^2\omega_l^2},\nonumber\\
&c_l^+(\tau)=\frac{\eps^2\fe^{-i\mu_l^2\tau}\left(\fe^{i\tau(\mu_l^2+
\frac{1}{\eps^2})}-1\right)}{1+\eps^2\mu_l^2},\quad
c_l^-(\tau)=\frac{\eps^2\left(\fe^{-i\tau\mu_l^2}-\fe^{-i\frac{\tau}
{\eps^2}}\right)}{1-\eps^2\mu_l^2},\label{cldl765}\\
&d_{l}^+(\tau)=\frac{i\eps^2}{(1+\eps^2\mu_l^2)^2}
\left[\fe^{\frac{i\tau}{\eps^2}}\left(\eps^2-i\tau\left(1+\eps^2\mu_l^2\right)\right)
-\eps^2\fe^{-i\mu_l^2\tau}\right],\nonumber\\
&d_l^-(\tau)=\frac{i\eps^2}{(1-\eps^2\mu_l^2)^2}\left[\fe^{-\frac{i\tau}{\eps^2}}
\left(\eps^2-i\tau\left(\eps^2\mu_l^2-1\right)\right)-\eps^2\fe^{-i\mu_l^2\tau}\right].\nonumber
\end{align*}

\end{document}